\documentclass[11pt]{article}

\usepackage[T1]{fontenc}
\usepackage[utf8]{inputenc}
\usepackage{lmodern}
\usepackage{amsmath,amssymb,amsthm,mathtools}
\usepackage{booktabs,array,tabularx,longtable}
\usepackage{graphicx}
\usepackage{tikz}
\usetikzlibrary{arrows.meta,calc,decorations.pathreplacing,positioning}
\usepackage[margin=1in]{geometry}
\usepackage{microtype}
\usepackage{xcolor}
\usepackage{enumitem}
\usepackage{url}
\usepackage[hidelinks]{hyperref}
\usepackage[capitalise,noabbrev]{cleveref}

\newtheorem{theorem}{Theorem}[section]
\newtheorem{proposition}[theorem]{Proposition}
\newtheorem{lemma}[theorem]{Lemma}
\newtheorem{corollary}[theorem]{Corollary}
\theoremstyle{definition}
\newtheorem{definition}[theorem]{Definition}
\newtheorem{example}[theorem]{Example}
\theoremstyle{remark}
\newtheorem{remark}[theorem]{Remark}
\newtheorem{conjecture}[theorem]{Conjecture}
\newtheorem{question}[theorem]{Question}

\newcommand{\R}{\mathbb R}
\newcommand{\C}{\mathbb C}
\newcommand{\one}{\mathbf 1}
\newcommand{\trans}{T_{\mathrm{tr}}}
\newcommand{\Ksh}{K_{\mathrm{sh}}}
\newcommand{\Gsh}{G_{\mathrm{sh}}}
\newcommand{\Ghyb}{G_{\mathrm{hyb}}}

\newcommand{\rank}{\operatorname{rank}}
\newcommand{\im}{\operatorname{im}}
\newcommand{\Span}{\operatorname{span}}
\newcommand{\Pf}{\operatorname{Pf}}
\newcommand{\relint}{\operatorname{relint}}
\newcommand{\Tight}{\mathcal T}
\newcommand{\KKT}{\mathcal K}
\newcommand{\Area}{\operatorname{Area}}

\title{KKT Stresses, Affine Moments, and Separator Flux\\
in the Heilbronn Triangle Problem}
\author{Dawid Trela\\[0.35em]
\small Doctoral School and Faculty of Law and Administration, War Studies University, Warsaw, Poland\\
\small \texttt{dawidmtrela@gmail.com}\\
\small ORCID: \href{https://orcid.org/0000-0001-9781-6425}{0000-0001-9781-6425}}
\date{24 August 2026}

\hypersetup{
  pdftitle={KKT Stresses, Affine Moments, and Separator Flux in the Heilbronn Triangle Problem},
  pdfauthor={Dawid Trela},
  pdfsubject={Discrete and computational geometry},
  pdfkeywords={Heilbronn triangle problem, KKT stress, affine moment identity, shared kernel, separator flux, exact symbolic computation}
}

\begin{document}
\maketitle

\begin{abstract}
For \(n\) points in the unit square, the Heilbronn triangle problem asks for
the largest possible minimum triangle area.  We develop a variational stress
theory for this max--min problem.  At every positive-area local optimum,
normalized Karush--Kuhn--Tucker multipliers assemble into a skew matrix
\(B\) satisfying \(Bz=2ib\), where \(b\) is the outward square reaction.
This equilibrium has the isotropic affine moment
\(\sum_i p_i b_i^T=\Delta I_2\); the identity also holds for every tight
subfamily carrying weights inherited from the same multiplier, with
\(\Delta\) scaled by its multiplier mass.  It follows that every positive
stress component meets all four sides and that there are at most two such
components.

To handle nonunique multipliers, we introduce the intersection of the stress
kernels over the whole KKT face and a canonical hybrid operator incorporating
all tight determinant and boundary derivatives.  A strictly convex selector
removes every decomposable invisible motion, leaving a rank-one-free residual
with a sharp dimension bound; the literal maximal two-dimensional residual
is excluded.  Two-terminal substresses satisfy an exact interface-flux law,
and a five-internal-vertex rank-four block has a Pfaffian cofactor carrier.
Finally, an analytic
reduction followed by exact symbolic enumeration excludes every
one-external completion of a specified one-silent five-cycle residual, for
all orientation words.  These results isolate the remaining degeneracy but
do not solve the problem for arbitrary \(n\).
\end{abstract}

\noindent\textbf{Keywords.}
Heilbronn triangle problem; KKT stress; affine moment identity; shared
kernel; separator flux; exact symbolic computation.

\medskip
\noindent\textbf{2020 Mathematics Subject Classification.}
Primary 52C10; secondary 52A40, 52C25, 51M25, 90C46.

\tableofcontents

\section{Introduction}
\label{sec:introduction}

Let \(P=\{p_1,\ldots,p_n\}\) be a set of points in the unit square.  The
square Heilbronn triangle problem asks how large one can make the smallest
area of a triangle determined by three points of \(P\).  In the notation
fixed below, the extremal value is
\[
 \Delta_n=\max_{P\in[0,1]^{2n}}
 \min_{1\le i<j<k\le n}\Area(p_i,p_j,p_k).
\]
The question goes back to Heilbronn and has generated two rather different
lines of work.  One seeks asymptotic bounds as \(n\) grows; the other
determines extremal configurations for fixed, small \(n\)
\cite{Roth1951,Zakharov2026}.  For the square, exact values have been proved
through \(n=8\)
\cite{YangZhangZeng1991,DressYangZeng1995,ZengChen2011,DehbiZeng2022}.
The nine-point record has recently crossed from numerical construction to a
more delicate form of certification: an exact algebraic configuration is
paired with an \(\varepsilon\)-global numerical upper-bound certificate, but
no independent exact symbolic global upper-bound proof is presently known
\cite{ComellasYebra2002,ChenXuZeng2017,MonjiModirKocuk2025,SudermannMerx2026}.
The latest audited source treats certification for \(n=10\) as future work
\cite{SudermannMerx2026}.  The asymptotic theory is farther removed from
explicit configurations.  Its history includes the work of Roth and of
Koml\'os, Pintz, and Szemer\'edi, while the current upper-bound line gives
\(\Delta_n\lesssim_\varepsilon n^{-7/6+\varepsilon}\)
\cite{Roth1951,KomlosPintzSzemeredi1981,KomlosPintzSzemeredi1982,
CohenPohoataZakharov2025,Zakharov2026}.

This paper studies the local structure forced at an extremal configuration.
The starting point is elementary: in a fixed orientation cell, the minimum
area conditions are smooth determinant inequalities, so their
Karush--Kuhn--Tucker multipliers can be read as weights on oriented triangles.
The determinant gradients make this interpretation considerably more rigid
than a generic KKT reformulation.  Each weighted triangle contributes a
rank-two skew matrix, and their sum is an equilibrium stress on the vertex
set.  The reaction of the four sides of the square is then encoded by the
single complex equation
\begin{equation}
  Bz=2ib. \label{eq:intro-equilibrium}
\end{equation}
Here \(z_i=x_i+iy_i\), and \(b_i\) is the outward reaction at \(p_i\).  The
normalization in \eqref{eq:intro-equilibrium} uses ordinary, rather than
double, area.

Two affine moment identities emerge from this equilibrium.  For the full
stress they are
\begin{equation}
  \sum_i b_i=0,
  \qquad
  \sum_i p_i b_i^T=\Delta I_2.        \label{eq:intro-noether}
\end{equation}
More surprisingly, the second identity remains true for an arbitrary
positive subfamily of active triangles: the right-hand side is simply
multiplied by the mass of that subfamily.  We call these Noether-type
identities because translations and infinitesimal linear changes of
coordinates are the underlying variations.  Nothing in their proof invokes
a conservation law in time; \emph{affine moment identity} is an equally
accurate description.

The KKT multiplier is usually not unique.  Consequently, the nullspace of one
stress is not intrinsic to the configuration.  The correct object is the
intersection over the entire multiplier face.  We show that this shared
kernel is realized by finitely many actual KKT stresses and is the kernel of
a canonical positive semidefinite operator.  Combining this operator with
the row space of all tight determinant and boundary derivatives gives a
hybrid observability operator.  Its kernel is exactly the common part of the
stress fibre and the tight-constraint tangent space.  Thus the failure of
first-order reconstruction is not represented by a rank test chosen from one
multiplier; it is a geometric residual attached to the whole KKT face.

The residual has a useful terminal form.  On the compact optimal level set,
maximize the strictly convex function
\(
  \Phi(P)=\sum_i\|p_i\|^2.
\)
At such a representative no invisible motion can move all vertices in
parallel with vertex-dependent speeds.  Indeed, determinant areas are exactly
quadratic, and the quadratic term of such a motion vanishes.  A surviving
residual is therefore a real matrix space containing no nonzero rank-one
matrix.  This observation yields a graph-of-an-operator normal form and the
sharp bound
\[
 \dim W\le 2\left\lfloor\frac{\dim H}{2}\right\rfloor,
 \qquad H=\Ksh/\langle\one\rangle.
\]
It also allows a complete exclusion of the maximal two-dimensional residual.

Partial stresses reveal a second layer of structure.  Suppose a positive
substress meets the remainder of the positive support at precisely two
vertices, labelled \(a\) and \(b\).  If a shared scalar coordinate \(\xi\)
has zero internal residual, skew symmetry leaves a single interface flux
\(\rho\):
\[
 B_{\mathcal H}\xi=\rho(e_a-e_b).
\]
The corresponding partial load satisfies
\begin{equation}
 \sum_i\xi_i(b_i^{\mathcal H})^T
 =\frac{\rho}{2}
   (Y_b-Y_a,\,X_a-X_b).               \label{eq:intro-flux}
\end{equation}
Thus interface flux couples the shared coordinate to a right-angle rotation
of the physical separator vector.  Nonzero flux is carried by a zero mode of
the internal skew block.  When that block has five vertices and rank four,
its kernel is generated by the Pfaffian cofactor vector.  The same moment
identity shows that an internally isolated two-terminal block must consume an
internal square reaction; hence there are at most eight pairwise internally
disjoint, internally isolated blocks of this kind.  For one fixed terminal
pair and one fixed shared scalar coordinate, at most four can have nonzero
flux.  These are block counts, not bounds on the size or depth of a block.

The final part of the paper uses exact symbolic computation to probe a minimal
residual that survives all local determinant equations.  It consists of one
stress-silent vertex and five tight but unsupported triangles forming an
induced five-cycle.  The six-label area-and-trace system has a smooth real
four-dimensional local component, so it cannot be dismissed as a formal
infinitesimal artefact.  Nevertheless, for every one of the \(32\) orientation
words, no completion using only one additional positive-support label exists
under the full stated hypotheses.  The reduction is analytic; the remaining
finite cases are checked exactly, including singular charts and inactive
inequalities.  The calculation is supplied in a self-contained supplement.

\subsection*{Relation to rigidity and determinant geometry}

Stress matrices are classical in rigidity theory, and recent determinant
hypergraph work uses common stress kernels to study identifiability
\cite{GortlerHealyThurston2010,CruickshankEtAl2024}.  Those results concern
generic equality frameworks and symmetric or signed weighted-adjacency
operators.  Here the stress is skew, its coefficients form a nonnegative KKT
face, and the square contributes an isotropic reaction moment.  Thus the
existing common-kernel theorems provide a close analogy but do not imply the
support or observability statements proved below.

Small separators, 2-sums, and pinned reaction systems also have well-developed
graph-rigidity counterparts
\cite{ServatiusServatius2011,ServatiusShaiWhiteley2008,Garamvolgyi2025,
MalicStreinu2023}.  Their hypotheses use graph edges, genericity, or a shared
edge, whereas our interface belongs to a triangle hypergraph and consists of
two shared labels.  In particular, these theories do not yield the partial
affine-moment reaction or the numerical block budgets in
\Cref{sec:separator}; indeed, 2-sum and pinned families warn that a fixed
interface need not bound the order or depth of a block.  Maxwell--Cremona
correspondences motivate some equilibrium language but use planar axial edge
stresses rather than the present skew triangle operator
\cite{EricksonLin2022}.  We therefore prove all stress, moment, and separator
identities directly.

\subsection*{Main results}

The exact formulations are distributed through the paper.  The following
summary gives the logical spine.

\begin{itemize}[leftmargin=2em]
\item \textbf{Stress and affine moments.}
Every positive-area local optimum admits a normalized nonnegative triangle
stress satisfying \eqref{eq:intro-equilibrium}.  Both the full stress and
every positive substress satisfy isotropic moment identities
(\Cref{thm:stress-noether,thm:partial-noether}).

\item \textbf{Support and boundary geometry.}
Every positive stress component meets all four sides of the square, and there
are at most two such components (\Cref{thm:components}).  Tight constraints
need not lie in the maximal support of the multiplier face; this distinction
is retained throughout.

\item \textbf{Shared-kernel observability.}
The whole KKT face has a finite actual stress realization and a canonical
Gram operator.  Adding the projector onto all tight rows gives an operator
whose kernel is precisely the invisible shared residual
(\Cref{thm:shared-gram,thm:hybrid-kernel}).

\item \textbf{Terminal residuals.}
An optimal level set contains a fully pinned representative whose invisible
residual is rank-one-free.  Such residuals have a graph normal form and obey
a sharp dimension bound; the literal maximal two-dimensional case is
impossible
(\Cref{thm:terminal-selector,thm:rank-one-free-dimension,thm:division-residual}).

\item \textbf{Separator flux.}
Two-terminal substresses satisfy the load law \eqref{eq:intro-flux}, the skew
dichotomy \(\rho(\xi_a-\xi_b)=0\), and an internal Dirichlet equation.  For a
five-internal-vertex block of rank four, the Pfaffian cofactor zero mode
carries every nonzero flux; internally isolated blocks necessarily use
square-boundary reactions
(\Cref{thm:two-terminal,thm:pfaffian-carrier,thm:separator-budget}).

\item \textbf{An exact five-cycle obstruction.}
Within a precisely defined one-silent induced-five-cycle class, every
one-external completion is impossible, independently of the orientation word
(\Cref{thm:c5-one-external}).
\end{itemize}

These statements do not solve the Heilbronn problem for arbitrary \(n\).
The remaining obstruction is an invisible, rank-one-free shared-kernel
residual.  In the smallest unresolved sector, positive stresses may have
nonzero second-order coefficients of both signs whose weighted sum cancels;
rowwise vanishing is not a consequence of KKT.  In higher dimension, a
two-plane selected from the residual need not inherit the literal silent
vertices used by the low-dimensional classification.  The residual map and
the resulting open questions are stated in \Cref{sec:open}.

The organization follows the mathematics rather than the order in which the
results were found.  Sections~\ref{sec:problem}--\ref{sec:support} establish
the variational and moment theory.  Sections~\ref{sec:shared} and
\ref{sec:terminal} develop observability and the terminal residual.  The
separator laws appear in \Cref{sec:separator}; the exact five-cycle laboratory
and counterexamples follow in Sections~\ref{sec:c5} and \ref{sec:limits}.
Appendices collect normalization checks, the elementary matrix-space lemma,
and the logical description of the exact computation.

\section{The optimization problem and its KKT system}
\label{sec:problem}

\subsection{Area conventions and orientation cells}

Write \(p_i=(x_i,y_i)\) and set
\[
 D_{ijk}(P)
 =\det(p_j-p_i,p_k-p_i),
 \qquad
 A_{ijk}(P)=\frac12|D_{ijk}(P)|.
\]
Thus \(D_{ijk}\) is signed double area and \(A_{ijk}\) is ordinary area.  We
use
\[
 \delta(P)=\min_{i<j<k}A_{ijk}(P)
\]
and reserve \(\Delta\) for the value at the configuration under discussion.
The maximum defining \(\Delta_n\) exists because \([0,1]^{2n}\) is compact
and \(\delta\) is continuous.  Configurations in general position show that
\(\Delta_n>0\) for every fixed \(n\).

At a configuration with \(\delta(P)>0\), every ordered triple has a fixed
orientation in a neighborhood.  If \(s_{ijk}=\operatorname{sgn}D_{ijk}(P)\)
for \(i<j<k\), then on this orientation cell
\[
 A_{ijk}=\frac{s_{ijk}}2D_{ijk}
\]
is a polynomial.  All differential assertions below are made in such a cell.
This entails no loss: a triangle at level \(\Delta>0\) cannot change
orientation without first having zero area.

\begin{proposition}[Empty active triangles]
\label{prop:empty-active}
If \(\delta(P)>0\), every triangle attaining \(\delta(P)\) has no point of
\(P\) in its interior or on one of its sides.
\end{proposition}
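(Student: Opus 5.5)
The plan is to argue by contradiction, using the fact that the points of \(P\) are distinct. Distinctness is automatic: if \(p_i=p_j\) with \(i\ne j\), then every triangle \(p_ip_jp_k\) has zero area, contradicting \(\delta(P)>0\) (here we use \(n\ge 3\), which is implicit in the problem). Fix a triangle \(T=p_ip_jp_k\) with \(A_{ijk}=\delta(P)=\Delta>0\). Suppose some \(p_l\), with \(l\notin\{i,j,k\}\), lies in the closed triangle \(T\), either in its interior or on one of its sides. Since \(p_l\) is distinct from the vertices, \(p_l\) is not a vertex of \(T\).

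The key identity is area additivity. For \(q\) in the closed triangle \(p_ip_jp_k\),
\[
 A_{ijk}=\Area(q,p_j,p_k)+\Area(p_i,q,p_k)+\Area(p_i,p_j,q).
\]
This follows from barycentric coordinates. Write \(q=\lambda_ip_i+\lambda_jp_j+\lambda_kp_k\) with \(\lambda\ge0\) and \(\sum\lambda=1\). The three sub-areas are \(\lambda_iA_{ijk}\), \(\lambda_jA_{ijk}\) and \(\lambda_kA_{ijk}\), because the signed determinants \(D\) are affine in each argument.

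Apply this with \(q=p_l\). Each sub-triangle is a triangle determined by three points of \(P\), so each has area at least \(\Delta\), and the sum is therefore at least \(3\Delta\). The sum also equals \(A_{ijk}=\Delta\), so \(3\Delta\le\Delta\). This contradicts \(\Delta>0\).

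For a point on a side, for instance on segment \(p_jp_k\), the sub-triangle \(p_lp_jp_k\) is degenerate, with \(\lambda_i=0\). A point of \(P\) then forms a zero-area triangle with two others, which contradicts \(\delta(P)>0\) directly. The additivity argument covers this case as well, so no separate treatment is needed.

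There is no real obstacle here. The only point needing care is to confirm that the three sub-triangles use three distinct labels and so are legitimate triangles of \(P\). This holds because \(l\notin\{i,j,k\}\), and distinctness of the points rules out \(p_l\) coinciding with a vertex. I would also note explicitly that coincident points already force \(\delta=0\), since that is exactly what makes the "not a vertex" step valid.
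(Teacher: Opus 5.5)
Your proof is correct and follows the paper's argument. For an interior point you use area additivity: the paper says each of the three positive sub-areas is strictly smaller than \(\delta(P)\), while you bound their sum below by \(3\Delta\). For a point on a side you use the zero-area collinear triple, and your remarks on distinctness of the points and the barycentric derivation of additivity make explicit details the paper leaves implicit.
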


\begin{proof}
An interior point divides the triangle into three positive-area triangles
whose areas sum to that of the original triangle.  Each is therefore smaller.
A point on a side creates a collinear triple of area zero.
\end{proof}

The observation is useful when the positive support is interpreted
combinatorially, but it does not by itself control how many minimum triangles
there can be.

\subsection{The smooth max--min formulation}

Introduce a level variable \(\tau\).  Locally the problem is
\begin{equation}
\begin{aligned}
 \text{maximize}\quad &\tau,\\
 \text{subject to}\quad
 & A_t(P)-\tau\ge0 &&(t\in\tbinom{[n]}3),\\
 &x_i\ge0,\quad 1-x_i\ge0,\quad
   y_i\ge0,\quad 1-y_i\ge0 &&(1\le i\le n).
\end{aligned}
\label{eq:maxmin-program}
\end{equation}
Let \(\Tight(P)\) be the set of triangle constraints tight at \(\tau=\Delta\).
Boundary equalities are also called tight; context will distinguish them from
triangle rows.

\begin{lemma}[MFCQ]
\label{lem:mfcq}
At every local maximizer of \eqref{eq:maxmin-program} with \(\Delta>0\), the
Mangasarian--Fromovitz constraint qualification holds.
\end{lemma}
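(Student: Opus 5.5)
We must exhibit a direction \(d=(\dot P,\dot\tau)\) along which every active constraint strictly increases. Equality constraints are absent, so MFCQ reduces to \(\nabla g\cdot d>0\) for each active constraint \(g\). The plan is to take \(\dot\tau=-1\), which makes every tight triangle row \(A_t-\tau\) strictly increasing. It then remains to choose \(\dot P\) so that the active boundary inequalities strictly increase, while each tight triangle row still increases.

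\emph{Step 1: push points inward.} We choose \(\dot P\) by moving every point toward the centre \(c=(\tfrac12,\tfrac12)\), setting \(\dot p_i=\varepsilon(c-p_i)\) with \(\varepsilon>0\). If \(x_i=0\) is active, then \(\dot x_i=\varepsilon/2>0\). If \(1-x_i\ge0\) is active, then \(-\dot x_i=\varepsilon/2>0\). The \(y\)-constraints behave in the same way. Thus every active boundary constraint has strictly positive derivative along \(d\).

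\emph{Step 2: triangle rows.} The motion \(p\mapsto p+\varepsilon t(c-p)\) is a homothety centred at \(c\) with ratio \(1-\varepsilon t\). Every signed double area is therefore multiplied by \((1-\varepsilon t)^2\). Within the fixed orientation cell, where \(A_t=\tfrac{s_t}{2}D_t\), this gives
\[
\frac{d}{dt}A_t=-2\varepsilon A_t=-2\varepsilon\Delta
\]
for each tight triangle. Hence the directional derivative of \(A_t-\tau\) equals \(-2\varepsilon\Delta+1\), which is positive once \(\varepsilon<1/(2\Delta)\). Since \(\Delta\le\tfrac12\), the choice \(\varepsilon=\tfrac12\) already works; any \(0<\varepsilon<1/(2\Delta)\) is enough.

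There is no genuine obstacle here. The only point needing care is the use of \(\Delta>0\). It ensures that we are inside an orientation cell where each \(A_t\) is a smooth polynomial, so the constraint gradients exist and the homothety computation is valid, as discussed just before the proposition on empty active triangles. The level variable \(\tau\) absorbs the shrinking of the areas, so MFCQ holds at every feasible point with \(\Delta>0\), and in particular at every local maximizer.
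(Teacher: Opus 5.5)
Your proof is correct and follows the same approach as the paper: an inward velocity makes the active square constraints increase, and a decreasing level variable absorbs the change in the tight triangle rows. The only difference is that you choose the specific homothety toward \((\tfrac12,\tfrac12)\), so every tight-row derivative is exactly \(-2\varepsilon\Delta\) and \(\dot\tau=-1\) suffices. The paper instead takes an arbitrary inward field \(v\) and sets \(\dot\tau=-M-1\) with \(M=\max_{t}|DA_t(P)[v]|\).
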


\begin{proof}
Choose a velocity \(v_i\) pointing strictly into the square at every active
side containing \(p_i\); the choices at different vertices are independent,
and a vector pointing into the interior works at a corner.  Let
\[
 M=\max_{t\in\Tight(P)}|D A_t(P)[v]|.
\]
Set the level velocity to \(\dot\tau=-M-1\).  Every active triangle inequality
then has derivative
\[
 D(A_t-\tau)[v,\dot\tau]
 =D A_t[v]+M+1>0,
\]
and every active square inequality has positive derivative by construction.
This is the required strict feasible direction.
\end{proof}

The lemma is included because the multiplier existence used below is not a
genericity assumption.  It is automatic at every positive-area local optimum.
It does not assert strict complementarity.

\subsection{Multipliers and outward reactions}

Let \(\lambda_t\ge0\) be the multiplier of \(A_t-\tau\).  Write
\(\ell_i,r_i,d_i,u_i\ge0\) for the multipliers of
\[
 x_i,\quad 1-x_i,\quad y_i,\quad 1-y_i,
\]
respectively.  Only tight constraints may receive a nonzero multiplier.  With
the maximization convention
\[
 L=\tau+\sum_t\lambda_t(A_t-\tau)
   +\sum_i\{\ell_i x_i+r_i(1-x_i)+d_i y_i+u_i(1-y_i)\},
\]
stationarity in \(\tau\) gives
\begin{equation}
 \sum_t\lambda_t=1.                   \label{eq:lambda-normalization}
\end{equation}
We encode the square multipliers by the outward reaction
\begin{equation}
 b_i=(b_{ix},b_{iy})
      =(r_i-\ell_i,\,u_i-d_i),
 \qquad
 b_i^{\C}=b_{ix}+ib_{iy}.             \label{eq:outward-load}
\end{equation}
For example, a point on the left side has \(b_{ix}=-\ell_i\le0\), while a
point on the right side has \(b_{ix}=r_i\ge0\).  A corner may carry two
nonzero side multipliers.

\begin{theorem}[Normalized KKT multipliers]
\label{thm:kkt}
Every positive-area local optimum admits multipliers
\[
 \lambda_t,\ell_i,r_i,d_i,u_i\ge0
\]
satisfying complementarity, position stationarity, and
\eqref{eq:lambda-normalization}.
\end{theorem}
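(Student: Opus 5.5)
The plan is to apply the standard first-order necessary conditions for smooth nonlinear programs under the Mangasarian--Fromovitz constraint qualification, which \Cref{lem:mfcq} supplies. First I reduce to a smooth problem. Let \(P\) be a positive-area local optimum with value \(\Delta>0\). On a neighborhood \(U\) of \(P\) inside the orientation cell, each \(A_t=\tfrac{s_t}{2}D_t\) is a polynomial. The pair \((P,\Delta)\) is then a local maximizer of \eqref{eq:maxmin-program} restricted to \(U\times\R\). To see this, suppose \((P',\tau')\) is feasible with \(P'\) near \(P\). Feasibility gives \(\tau'\le\delta(P')\), and local optimality of \(P\) for \(\delta\) gives \(\delta(P')\le\Delta\), so \(\tau'\le\Delta\). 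All constraint functions are \(C^\infty\), and the objective \(\tau\) is linear.

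Next I invoke the KKT theorem. For a local maximizer of a \(C^1\) program at which MFCQ holds, there exist nonnegative multipliers on the active inequality constraints such that the gradient of the Lagrangian vanishes. We set the multipliers of inactive constraints to zero, and this is exactly complementarity. Here the gradient of \(L\) is taken with respect to all variables \((P,\tau)\), with \(L\) in the stated maximization convention. The cleanest route is the Fritz John theorem. It yields multipliers \((\mu_0,\lambda,\ell,r,d,u)\ge0\), not all zero, with \(\mu_0\nabla\tau+\sum(\text{multiplier})\nabla g=0\). If \(\mu_0=0\), pair this identity with the MFCQ direction constructed in \Cref{lem:mfcq}. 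Every active gradient has strictly positive inner product with that direction, so all multipliers would have to vanish, a contradiction. Hence \(\mu_0>0\), and we rescale so that \(\mu_0=1\).

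Finally I read off the components. The \(\tau\)-component of stationarity is \(1-\sum_t\lambda_t=0\), which is \eqref{eq:lambda-normalization}. The \(p_i\)-components give position stationarity,
\[
\sum_t\lambda_t\nabla_{p_i}A_t+(\ell_i-r_i,\;d_i-u_i)=0,
\]
that is, \(\sum_t\lambda_t\nabla_{p_i}A_t=b_i\) in the notation of \eqref{eq:outward-load}.

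The only step needing care is the justification that \(\mu_0\neq0\). \Cref{lem:mfcq} handles it, but one point must be checked: there are no equality constraints, so MFCQ is exactly the existence of a direction that is strictly increasing for all active inequalities. That is what the lemma constructs. Everything else is bookkeeping of signs in the maximization convention.
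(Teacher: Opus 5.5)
Your proposal is correct and follows the paper's route. The paper proves the theorem by applying the KKT theorem under \Cref{lem:mfcq} and reading \(\sum_t\lambda_t=1\) off the \(\tau\)-component of stationarity. You do the same, and you also write out two steps the paper leaves implicit: that a positive-area local optimum of \(\delta\) gives a local maximizer of the smooth program, and the Fritz John argument showing \(\mu_0>0\).
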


\begin{proof}
Apply the KKT theorem using \Cref{lem:mfcq}.  The normalization follows from
the level equation as above.
\end{proof}

\begin{table}[t]
\centering
\caption{Core notation.  A subscript \(\mathcal H\) denotes a selected
positive subfamily of active triangles.}
\label{tab:notation}
\begin{tabularx}{\textwidth}{@{}lX@{}}
\toprule
Symbol & Meaning\\
\midrule
\(D_t,A_t\) & signed double area and ordinary area \(A_t=|D_t|/2\)\\
\(\Delta\) & the ordinary minimum area at the configuration\\
\(\lambda_t\) & normalized nonnegative triangle multiplier\\
\(B_t,B\) & elementary triangle stress and its KKT-weighted sum\\
\(b\) & outward square reaction, viewed in \(\R^2\) or \(\C\)\\
\(\KKT(P)\) & normalized KKT multiplier polytope at \(P\)\\
\(\Ksh\) & intersection of kernels over the full KKT face\\
\(H_0\) & quotient \(\Ksh/\langle\one\rangle\)\\
\(W\) & translation-quotiented invisible residual\\
\(Q_i=(\xi_i,\eta_i)\) & a two-coordinate shared shadow\\
\(C_t\) & affine velocity gradient on triangle \(t\)\\
\(q_t\) & second-order area coefficient \(\xi^TB_t\eta\)\\
\(\rho\) & scalar flux across a two-vertex interface\\
\bottomrule
\end{tabularx}
\end{table}

\section{Triangle stresses and affine moment identities}
\label{sec:stress}

\subsection{The elementary skew matrix}

For an oriented active triangle \(t=(i,j,k)\), set
\[
 a_t=e_j-e_i,\qquad c_t=e_k-e_i,
\]
and let \(s_t\in\{1,-1\}\) be its orientation sign.  Define
\begin{equation}
 B_t=s_t(a_tc_t^T-c_ta_t^T).          \label{eq:elementary-stress}
\end{equation}
The matrix is supported on the three vertices of \(t\).  Its nonzero upper
triangular entries are the signed coefficients of the cyclic boundary
\(i\to j\to k\to i\).  Directly from \eqref{eq:elementary-stress},
\begin{equation}
 B_t^T=-B_t,\qquad B_t\one=0.         \label{eq:Bt-basic}
\end{equation}
If \(x=(x_i)\) and \(y=(y_i)\), then
\begin{equation}
 x^TB_ty=s_tD_t(P)=2A_t(P).          \label{eq:stress-area}
\end{equation}
The KKT stress is
\begin{equation}
 B=B(\lambda)=\sum_{t\in\Tight(P)}\lambda_tB_t.  \label{eq:KKT-stress}
\end{equation}
It follows that \(B^T=-B\), \(B\one=0\), and
\[
 x^TBy=2\Delta\sum_t\lambda_t=2\Delta.
\]

\begin{figure}[t]
\centering
\begin{tikzpicture}[scale=0.92,>=Latex,line cap=round,line join=round]
  \begin{scope}[xshift=-3.4cm]
    \coordinate (i) at (0,0);
    \coordinate (j) at (2.6,0.25);
    \coordinate (k) at (0.8,2.1);
    \draw[thick] (i)--(j)--(k)--cycle;
    \draw[->,very thick] ($(i)!0.14!(j)$)--($(i)!0.72!(j)$);
    \draw[->,very thick] ($(j)!0.14!(k)$)--($(j)!0.72!(k)$);
    \draw[->,very thick] ($(k)!0.14!(i)$)--($(k)!0.72!(i)$);
    \fill (i) circle (2pt) node[below left] {\(i\)};
    \fill (j) circle (2pt) node[below right] {\(j\)};
    \fill (k) circle (2pt) node[above] {\(k\)};
    \node at (1.15,0.95) {\(s_t\lambda_t\)};
    \node[below] at (1.25,-0.55) {triangle stress};
  \end{scope}
  \begin{scope}[xshift=2.0cm]
    \draw[thick] (0,0) rectangle (2.6,2.6);
    \fill (0,0.75) circle (2pt);
    \fill (2.6,1.75) circle (2pt);
    \fill (0.95,0) circle (2pt);
    \fill (1.8,2.6) circle (2pt);
    \draw[->,very thick] (0,0.75)--(-0.65,0.75)
      node[left] {\(\ell\)};
    \draw[->,very thick] (2.6,1.75)--(3.25,1.75)
      node[right] {\(r\)};
    \draw[->,very thick] (0.95,0)--(0.95,-0.65)
      node[below] {\(d\)};
    \draw[->,very thick] (1.8,2.6)--(1.8,3.25)
      node[above] {\(u\)};
    \node[below] at (1.3,-0.9) {outward reactions};
  \end{scope}
\end{tikzpicture}
\caption{Left: an oriented triangle contributes a cyclic rank-two skew
stress.  Right: the outward square reactions.  The four side totals are
forced to equal \(\Delta\), not chosen as a normalization.}
\label{fig:stress-boundary}
\end{figure}

\subsection{Complex equilibrium}

Differentiating \(A_t=\frac12x^TB_ty\) gives
\[
 \nabla_x\sum_t\lambda_tA_t=\frac12By,
 \qquad
 \nabla_y\sum_t\lambda_tA_t=-\frac12Bx.
\]
The position stationarity equations and the convention
\eqref{eq:outward-load} are therefore
\begin{equation}
 By=2b_x,\qquad Bx=-2b_y.             \label{eq:real-equilibrium}
\end{equation}
Upon writing \(z=x+iy\) and \(b=b_x+ib_y\), the two equations become
\begin{equation}
 \boxed{Bz=2ib.}                      \label{eq:complex-equilibrium}
\end{equation}
The complex form is not an additional structure; it is the shortest way of
recording the two planar equilibrium systems with their correct quarter-turn.

\subsection{Full affine moments}

\begin{theorem}[Stress and affine moments]
\label{thm:stress-noether}
Let \(P\) be a positive-area local optimum at ordinary area level \(\Delta\),
and let \((\lambda,b)\) be any normalized KKT pair.  Its stress satisfies
\[
 B^T=-B,\qquad B\one=0,\qquad Bz=2ib,
\]
and the outward reactions obey
\begin{equation}
 \sum_i b_i=0,\qquad
 \sum_i p_i b_i^T=\Delta I_2.         \label{eq:full-noether}
\end{equation}
\end{theorem}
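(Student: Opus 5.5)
The first three relations are already in hand: skew symmetry and \(B\one=0\) follow from \eqref{eq:Bt-basic} by linearity of \eqref{eq:KKT-stress}, and \(Bz=2ib\) is \eqref{eq:complex-equilibrium}, i.e.\ the two real equations \eqref{eq:real-equilibrium} coming from position stationarity with \(\nabla_x A_t=\frac12B_ty\), \(\nabla_y A_t=-\frac12B_tx\). So the substance is the moment identities, and I plan to derive them purely algebraically from \eqref{eq:real-equilibrium}, skew symmetry, \(B\one=0\), and the area identity \(x^TBy=2\Delta\).

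For \(\sum_i b_i=0\), pair \eqref{eq:real-equilibrium} with \(\one\). This gives \(2\one^Tb_x=\one^TBy=-(B\one)^Ty=0\), and likewise \(2\one^Tb_y=-\one^TBx=0\). This is the translation invariance of every area.

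For the second-moment matrix \(\sum_ip_ib_i^T\), whose entries are \(x^Tb_x, x^Tb_y, y^Tb_x, y^Tb_y\), I will compute each entry from \eqref{eq:real-equilibrium}:
\[
x^Tb_x=\tfrac12x^TBy=\Delta,\qquad y^Tb_y=-\tfrac12y^TBx=\tfrac12x^TBy=\Delta,
\]
using skew symmetry in the second equality. The off-diagonal entries are \(x^Tb_y=-\tfrac12x^TBx=0\) and \(y^Tb_x=\tfrac12y^TBy=0\), since quadratic forms of a skew matrix vanish. Hence \(\sum_ip_ib_i^T=\Delta I_2\). Interpreted variationally, these are the responses to the infinitesimal linear maps \(\mathrm{diag}(1,0)\), \(\mathrm{diag}(0,1)\) (areas scale linearly) and to the shears (area-preserving). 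The key input is \(x^TBy=2\sum_t\lambda_tA_t=2\Delta\), which uses \eqref{eq:stress-area}, complementarity (only tight triangles, each of area \(\Delta\), carry weight), and the normalization \eqref{eq:lambda-normalization}.

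I expect no real obstacle. The only points needing care are sign conventions: checking that \(b\) defined by \eqref{eq:outward-load} matches the signs in \eqref{eq:real-equilibrium} under the maximization Lagrangian. Concretely, \(\partial_{x_i}L=0\) gives \(\sum\lambda_t\partial_{x_i}A_t+\ell_i-r_i=0\), i.e.\ \(\tfrac12(By)_i=b_{ix}\). The orientation signs \(s_t\) are absorbed in \(B_t\), so \(x^TB_ty=2A_t>0\) regardless of orientation, which is what makes the diagonal entries come out as \(+\Delta\).
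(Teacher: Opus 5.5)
Your proposal is correct and follows essentially the same route as the paper. You obtain force balance by pairing \eqref{eq:real-equilibrium} with \(\one\), and you evaluate the four moment entries from \(b_x=\tfrac12By\), \(b_y=-\tfrac12Bx\), using skew symmetry and \(x^TBy=2\Delta\). Your explicit check of the sign conventions under the maximization Lagrangian is a welcome addition.
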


\begin{proof}
Only the two moment identities remain to be shown.  Multiplying
\eqref{eq:real-equilibrium} on the left by \(\one^T\) and using
\(\one^TB=0\) gives force balance.  For the moment tensor, use
\[
 b_x=\frac12By,\qquad b_y=-\frac12Bx.
\]
Thus
\[
 \sum_i p_ib_i^T
 =
 \begin{pmatrix}
  \frac12x^TBy&-\frac12x^TBx\\[2mm]
  \frac12y^TBy&-\frac12y^TBx
 \end{pmatrix}.
\]
The mixed diagonal entries vanish by skew symmetry, while
\(-y^TBx=x^TBy\).  Finally,
\[
 \frac12x^TBy
 =\frac12\sum_t\lambda_tx^TB_ty
 =\sum_t\lambda_t\Delta=\Delta.
\]
\end{proof}

There is also a variational reading.  Translation invariance gives the first
identity in \eqref{eq:full-noether}.  Under \(p_i\mapsto p_i+\varepsilon
Mp_i\), every signed area changes to first order by
\(\operatorname{tr}(M)\); pairing stationarity with arbitrary \(M\) gives the
second identity.  This is the limited sense in which
\eqref{eq:full-noether} is Noether-type.

\subsection{Partial stresses}

The moment identity is not restricted to a whole KKT stress.  Let
\(\mathcal H\subseteq\Tight(P)\) be any selected subfamily, retaining the
weights of a fixed actual KKT multiplier.  Put
\begin{equation}
 B_{\mathcal H}=\sum_{t\in\mathcal H}\lambda_tB_t,
 \qquad
 m_{\mathcal H}=\sum_{t\in\mathcal H}\lambda_t,   \label{eq:partial-stress}
\end{equation}
and define the \emph{algebraic partial load} by
\begin{equation}
 B_{\mathcal H}z=2ib^{\mathcal H}.    \label{eq:partial-load}
\end{equation}
This load need not, by itself, be supported on the square boundary.  At a
vertex shared with other substresses it records the force that those pieces
must cancel.

\begin{theorem}[Partial affine moment identity]
\label{thm:partial-noether}
For every subfamily \(\mathcal H\) as above,
\begin{equation}
 \sum_i b_i^{\mathcal H}=0,\qquad
 \sum_i p_i(b_i^{\mathcal H})^T
   =m_{\mathcal H}\Delta I_2.         \label{eq:partial-noether}
\end{equation}
\end{theorem}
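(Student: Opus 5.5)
The argument follows the proof of \Cref{thm:stress-noether} word for word, with \(B\) replaced by \(B_{\mathcal H}\). The only new input is that every active triangle in \(\mathcal H\) has area exactly \(\Delta\).

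First, separate \eqref{eq:partial-load} into real and imaginary parts. Since \(B_{\mathcal H}\) is real, we get
\[
 B_{\mathcal H}y=2b^{\mathcal H}_x,\qquad B_{\mathcal H}x=-2b^{\mathcal H}_y .
\]
Because \(B_{\mathcal H}\) is a nonnegative combination of the \(B_t\), \eqref{eq:Bt-basic} gives \(B_{\mathcal H}^T=-B_{\mathcal H}\) and \(B_{\mathcal H}\one=0\). Multiplying both real equations on the left by \(\one^T\), and using \(\one^TB_{\mathcal H}=-(B_{\mathcal H}\one)^T=0\), yields \(\sum_i b_i^{\mathcal H}=0\).

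For the moment tensor, substitute \(b^{\mathcal H}_x=\tfrac12B_{\mathcal H}y\) and \(b^{\mathcal H}_y=-\tfrac12B_{\mathcal H}x\). This gives
\[
 \sum_i p_i(b_i^{\mathcal H})^T=
 \begin{pmatrix}
 \tfrac12x^TB_{\mathcal H}y & -\tfrac12x^TB_{\mathcal H}x\\[1mm]
 \tfrac12y^TB_{\mathcal H}y & -\tfrac12y^TB_{\mathcal H}x
 \end{pmatrix}.
\]
Skew symmetry kills the quadratic forms \(x^TB_{\mathcal H}x\) and \(y^TB_{\mathcal H}y\), so the off-diagonal entries vanish. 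It also gives \(-y^TB_{\mathcal H}x=x^TB_{\mathcal H}y\), so the two diagonal entries agree.

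It remains to evaluate the diagonal entry. By \eqref{eq:stress-area}, each tight triangle \(t\in\mathcal H\subseteq\Tight(P)\) has \(x^TB_ty=2A_t(P)=2\Delta\). Hence
\[
 \tfrac12x^TB_{\mathcal H}y=\sum_{t\in\mathcal H}\lambda_t\Delta=m_{\mathcal H}\Delta .
\]

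No step is a genuine obstacle. The main point to flag is conceptual: the argument never uses the square-boundary structure of \(b\) or the normalization \eqref{eq:lambda-normalization}. It uses only skew symmetry, \(B\one=0\), and the fact that every selected triangle is tight. That is exactly why the identity survives for an arbitrary subfamily, with \(\Delta\) scaled by the mass \(m_{\mathcal H}\). Because the load \(b^{\mathcal H}\) is defined algebraically by \eqref{eq:partial-load}, no KKT stationarity for the subfamily is needed. The actual KKT multiplier enters only through nonnegativity of the weights, and even that is inessential to the identity itself.
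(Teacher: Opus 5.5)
Your proof is correct and follows the paper's own argument: rerun the proof of \Cref{thm:stress-noether} verbatim with \(B_{\mathcal H}\) in place of \(B\). The only change is the scalar \(\tfrac12x^TB_{\mathcal H}y=\sum_{t\in\mathcal H}\lambda_tA_t=m_{\mathcal H}\Delta\), which holds because every selected triangle is tight at level \(\Delta\). Your closing observation, that nonnegativity of the weights is not needed for the algebra, matches the remark the paper places right after the theorem.
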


\begin{proof}
The proof of \Cref{thm:stress-noether} applies verbatim to
\(B_{\mathcal H}\).  The only changed scalar is
\[
 \frac12x^TB_{\mathcal H}y
 =\sum_{t\in\mathcal H}\lambda_tA_t
 =m_{\mathcal H}\Delta,
\]
because every \(t\in\mathcal H\) is active at the same level.
\end{proof}

\begin{remark}
Positivity is not needed for the algebra in
\eqref{eq:partial-noether}.  It becomes essential when
\(m_{\mathcal H}>0\) is used geometrically, for instance to rule out a
rank-one moment tensor or to assign a substress to a component.
\end{remark}

\subsection{The four side laws}

Let
\[
 L=\sum_i\ell_i,\quad R=\sum_i r_i,\quad
 D=\sum_i d_i,\quad U=\sum_i u_i.
\]
Force balance gives \(R=L\) and \(U=D\).  The two diagonal entries of
\eqref{eq:full-noether} then give
\begin{equation}
 L=R=D=U=\Delta.                      \label{eq:side-totals}
\end{equation}
Indeed, \(x^Tb_x\) receives the value \(R\) from the right side and zero from
the left side; the vertical identity is analogous.  The off-diagonal entries
give
\begin{equation}
 \sum_i y_ir_i=\sum_i y_i\ell_i,
 \qquad
 \sum_i x_iu_i=\sum_i x_id_i.         \label{eq:side-mixed-moments}
\end{equation}
Thus every positive-area local optimum has a positive reaction on every side
of the square.  The container supplies global information through both the
total reactions and their tangential moments.

\section{Multiplier faces, support, and components}
\label{sec:support}

\subsection{The whole multiplier face}

Fix a positive-area local optimum \(P\), its orientation cell, and its complete
set of tight triangle and square constraints.  The normalized KKT equations
are a finite linear system in the multipliers.  We write
\begin{equation}
 \KKT(P)=\{\theta\ge0:A_P\theta=e\},  \label{eq:KKT-polytope}
\end{equation}
where \(\theta\) collects triangle and boundary multipliers, the last coordinate of
\(e\) encodes \(\sum_t\lambda_t=1\), and the other coordinates encode position
stationarity.  This is a compact polytope: the triangle part lies in a
simplex, and \eqref{eq:side-totals} bounds every boundary multiplier.

Define the \emph{maximal KKT support}
\[
 \mathcal U_+=\{j:\text{some }\theta\in\KKT(P)
                  \text{ has }\theta_j>0\}.
\]
A tight triangle row belonging to \(\mathcal U_+\) is called
\emph{supportable}.  A tight row outside \(\mathcal U_+\) is
\emph{unsupported}.  The latter still belongs to the
tight Jacobian and may be essential for local pinning.

\begin{proposition}[Maximal support and a common exposing vector]
\label{prop:maximal-support}
Every \(\theta^\circ\in\relint\KKT(P)\) has support exactly
\(\mathcal U_+\).  Moreover, there
is a vector \(y\) such that
\begin{equation}
 e^Ty=0,\qquad
 (A_P^Ty)_j=0\ (j\in\mathcal U_+),\qquad
 (A_P^Ty)_j>0\ (j\notin\mathcal U_+). \label{eq:common-exposer}
\end{equation}
\end{proposition}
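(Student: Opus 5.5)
Both claims are standard polyhedral facts about the compact nonempty polytope \(\KKT(P)=\{\theta\ge0:A_P\theta=e\}\). The polytope is nonempty by \Cref{thm:kkt} and compact by the bounds recorded after \eqref{eq:KKT-polytope}. I would prove the support statement by convexity and the exposing vector by linear programming duality, in that order.

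\emph{Support of relative interior points.} For each \(j\in\mathcal U_+\), choose \(\theta^{(j)}\in\KKT(P)\) with \(\theta^{(j)}_j>0\). The average \(\bar\theta\) of these finitely many points lies in \(\KKT(P)\) by convexity and has support exactly \(\mathcal U_+\): no coordinate outside \(\mathcal U_+\) is positive at any point of the polytope, by definition. Now take \(\theta^\circ\in\relint\KKT(P)\). By the standard characterization of the relative interior, there is \(\varepsilon>0\) such that \(\theta^\circ+\varepsilon(\theta^\circ-\bar\theta)\in\KKT(P)\). If \(\theta^\circ_j=0\) for some \(j\in\mathcal U_+\), this point would have \(j\)-th coordinate \(-\varepsilon\bar\theta_j<0\), which is impossible. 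Hence \(\operatorname{supp}\theta^\circ\supseteq\mathcal U_+\). The reverse inclusion is immediate from the definition of \(\mathcal U_+\).

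\emph{Common exposing vector.} Let \(N\) be the complement of \(\mathcal U_+\). Consider the linear program of maximizing \(\sum_{j\in N}\theta_j\) subject to \(A_P\theta=e\) and \(\theta\ge0\). Its optimal value is \(0\), since every feasible point vanishes on \(N\), and it is attained. Its dual is to minimize \(e^Ty\) subject to \((A_P^Ty)_j\ge(\one_N)_j\) for all \(j\). Strong duality gives a dual optimum \(y\) with \(e^Ty=0\) and \((A_P^Ty)_j\ge1>0\) for \(j\in N\), and \((A_P^Ty)_j\ge0\) for \(j\in\mathcal U_+\).

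Complementary slackness against the primal optimum \(\bar\theta\), which is positive on all of \(\mathcal U_+\), forces \((A_P^Ty)_j=0\) for \(j\in\mathcal U_+\). Equivalently, one can compute \(0=e^Ty=\bar\theta^TA_P^Ty\), a sum of nonnegative terms with \(\bar\theta_j>0\) exactly on \(\mathcal U_+\). This yields \eqref{eq:common-exposer}.

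The main point needing care is this last step. A dual vector from strong duality only gives inequalities on \(\mathcal U_+\), and the equalities require a single feasible point positive on all of \(\mathcal U_+\) simultaneously, namely \(\bar\theta\). If \(N\) is empty, one simply takes \(y=0\).
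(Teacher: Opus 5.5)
Your proof is correct and follows essentially the same route as the paper: the relative-interior support characterization, then LP duality for maximizing \(\sum_{j\notin\mathcal U_+}\theta_j\), with complementary slackness against a feasible point that is positive on all of \(\mathcal U_+\). The only differences are that you use the average \(\bar\theta\) where the paper uses \(\theta^\circ\) itself (equivalent by the first part), and that you spell out the relative-interior step which the paper only cites as standard.
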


\begin{proof}
The first assertion is the standard description of the relative interior of
a polytope contained in an orthant: a coordinate is positive at a
relative-interior point exactly when it is not identically zero on the
polytope.  For the second, maximize the sum of the coordinates outside
\(\mathcal U_+\)
over \eqref{eq:KKT-polytope}.  Its value is zero.  Linear programming duality
gives a nonnegative reduced-cost vector that is strictly positive off
\(\mathcal U_+\).  At \(\theta^\circ\), complementary pairing and
\(e^Ty=0\) force every reduced cost on \(\mathcal U_+\) to vanish.  Rescaling
yields \eqref{eq:common-exposer}.
\end{proof}

The vector \(y\) exposes the smallest coordinate face containing the KKT
polytope.  It is a global coefficient-space certificate.  In particular,
\Cref{prop:maximal-support} does not localize \(y\) at a vertex and does not
turn an unsupported tight row into a supportable one.

\begin{corollary}[A sparse KKT certificate]
\label{cor:kkt-sparse}
There is a KKT pair with at most \(2n+1\) positive multipliers in total and at
most \(2n-3\) positive triangle multipliers.
\end{corollary}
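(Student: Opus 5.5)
The plan is to take a vertex of the compact polytope \(\KKT(P)\) and count the number of linearly independent equations in \(A_P\theta=e\). By \Cref{thm:kkt} the polytope is nonempty, and it is compact, so it has at least one vertex \(\theta^*\). At a vertex of \(\{\theta\ge0:A_P\theta=e\}\), the columns of \(A_P\) indexed by \(\operatorname{supp}\theta^*\) are linearly independent. Hence \(|\operatorname{supp}\theta^*|\le\rank A_P\), and the task reduces to bounding the rank of the two relevant column families.

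For the total count, \(A_P\) has \(2n+1\) rows: \(2n\) position-stationarity rows and one normalization row \(\sum_t\lambda_t=1\). Therefore \(|\operatorname{supp}\theta^*|\le 2n+1\).

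For the triangle count, I would use the fact that each triangle column lives in a small subspace. In the \(2n\) position coordinates, the column of \(\lambda_t\) is \(\nabla A_t(P)\). Since \(A_t\) is invariant under translations and under area-preserving linear maps, this gradient is orthogonal to:
\begin{itemize}
\item the two translation fields \((1,0)\) and \((0,1)\) at every vertex;
\item the three fields \(p_i\mapsto Mp_i\) with \(\operatorname{tr}M=0\).
\end{itemize}
Pairing \(\nabla A_t\) with \(p_i\mapsto Mp_i\) for general \(M\) gives \(\operatorname{tr}(M)A_t=\operatorname{tr}(M)\Delta\), which is independent of \(t\). This is the same variational reading used after \Cref{thm:stress-noether}. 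So the position parts of all triangle columns lie in a common affine-shifted subspace of dimension at most \(2n-5\) (the extra constraint being the trace direction). Adding the normalization row, which equals \(1\) in every triangle column, the full triangle columns span at most a \((2n-5)+1\) dimensional space, up to one dimension of slack from the trace-direction value \(\Delta\ne0\). Independence of the supported triangle columns then bounds their number by roughly \(2n-4\).

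This is sharper than the claimed \(2n-3\), so my bookkeeping may be off by one, for instance if the five invariant fields are not independent at degenerate configurations. To be safe, I would instead argue directly: the triangle columns' position parts lie in the orthogonal complement of the four-dimensional space spanned by translations and rotation-type fields, which are genuinely independent when the points are not all coincident. This gives dimension \(\le 2n-4\), and with the normalization row the rank is \(\le 2n-3\). Independence of the supported triangle columns at the vertex \(\theta^*\), which is a subfamily of independent columns, then gives at most \(2n-3\) positive \(\lambda_t\).

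The main obstacle is this rank bookkeeping. One must check that the invariant motions used are independent for any configuration with \(\Delta>0\); at least three points are non-collinear, so the translations plus the linear fields span a space of dimension \(6\). Then one must correctly account for how the normalization coordinate interacts with the scaling direction, whose pairing with each triangle column is the nonzero constant \(\Delta\) rather than \(0\). I would resolve this by noting that the scaling pairing equals \(\Delta\) times the normalization entry. Consequently the triangle columns, augmented with the normalization row, lie in the space orthogonal to the \(5\) zero-pairing fields together with the combined vector \((\text{scaling},-\Delta)\). This yields the bound \(2n+1-6=2n-5\) or better, which is comfortably within \(2n-3\).
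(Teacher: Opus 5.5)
Your proof is correct once it is presented in its final form, and it takes a genuinely different route from the paper for the triangle count.

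\textbf{The shared first step.} At an extreme point of \(\KKT(P)\) the positive columns are independent, so there are at most \(2n+1\) of them. This is exactly the paper's first step.

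\textbf{The paper's route.} For the triangle count the paper uses the side laws \eqref{eq:side-totals}. Since \(L=R=D=U=\Delta>0\), every KKT point, in particular the chosen vertex, has at least one positive boundary multiplier on each side. Removing these four columns leaves at most \(2n-3\) triangle columns.

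\textbf{Your route.} You never look at the boundary columns. Each augmented triangle column \((\nabla A_t(P),1)\in\R^{2n+1}\) annihilates six vectors:
\begin{itemize}
\item the two translation fields, padded with a zero normalization entry;
\item the three \(\mathfrak{sl}_2(\R)\) fields, padded the same way;
\item the twisted dilation \((S,-2\Delta)\), where \(S\) is the field \(p_i\mapsto p_i\).
\end{itemize}
The factor is \(2\) because \(\operatorname{tr}I=2\). Your vector \((S,-\Delta)\) is off by this constant unless ``scaling'' means \(p_i\mapsto\tfrac12p_i\). The six vectors are independent for two reasons: \(\Delta>0\) makes the last entry of the dilation vector nonzero, and a non-collinear triple makes the five special-affine fields independent. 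Hence the triangle columns span at most \(2n-5\) dimensions, and a vertex carries at most \(2n-5\) positive \(\lambda_t\). This is two better than the statement.

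\textbf{What to cut.} Drop the hedging middle paragraph, including ``roughly \(2n-4\)'' and the four-dimensional space of ``rotation-type'' fields. That paragraph justifies independence only by non-coincidence of the points. Non-coincidence is not enough for \(\mathfrak{sl}_2\) fields on collinear data: on a line at angle \(\pi/4\) the rotation field equals minus the \(\operatorname{diag}(1,-1)\) field. Your last paragraph's non-collinearity argument is the correct one.

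\textbf{Comparison.} Both proofs rest on the affine invariance of determinant areas, used on opposite sides of the stationarity equation. The paper spends it on the boundary side, where it is already packaged as the moment identity \(\sum_ip_ib_i^T=\Delta I_2\) of \Cref{sec:stress}. That makes its proof a two-line count. Your version spends it on the triangle side. It is self-contained, independent of which square constraints are tight, and gives the sharper bound.
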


\begin{proof}
Choose an extreme point of \(\KKT(P)\).  Its positive columns are linearly
independent, so their number is at most the \(2n+1\) rows of position
stationarity and normalization.  At least one boundary multiplier is positive
on each side by \eqref{eq:side-totals}; removing these four columns leaves at
most \(2n-3\) positive triangle columns.
\end{proof}

This linear support bound is useful, but it says nothing by itself about the
rank of \(B\), the size of a separator block, or reuse of the same external
vertices by several blocks.

\subsection{Positive stress components}

Choose a relative-interior multiplier and form the hypergraph whose vertices
are \([n]\) and whose hyperedges are the triangles with positive multiplier.
A \emph{positive component} is a connected component containing at least one
such hyperedge.  Vertices incident to no positive triangle are omitted from
the component count.  Let the positive components be
\(C_1,\ldots,C_c\), and put
\[
 \alpha_a=\sum_{t\subset C_a}\lambda_t>0.
\]
Because no positive triangle crosses between two components, \(B\) is block
diagonal on their vertex sets, apart from zero rows at vertices incident to no
positive triangle.

\begin{theorem}[Component moments and the two-component bound]
\label{thm:components}
For every positive component \(C_a\),
\begin{equation}
 \sum_{i\in C_a}b_i=0,\qquad
 \sum_{i\in C_a}p_ib_i^T=\alpha_a\Delta I_2.  \label{eq:component-noether}
\end{equation}
Consequently, \(C_a\) carries a positive reaction on each of the four sides of
the square, and
\begin{equation}
 c\le2.                               \label{eq:component-bound}
\end{equation}
\end{theorem}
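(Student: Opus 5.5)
The moment identities \eqref{eq:component-noether} should follow from \Cref{thm:partial-noether} applied to \(\mathcal H_a=\{t:\lambda_t>0,\ t\subset C_a\}\), for which \(m_{\mathcal H_a}=\alpha_a\). The key step is to identify the algebraic partial load \(b^{\mathcal H_a}\) with the true reaction restricted to \(C_a\). Since \(B\) is block diagonal on the component vertex sets and has zero rows at vertices incident to no positive triangle, the rows of \(Bz\) indexed by \(C_a\) coincide with those of \(B_{\mathcal H_a}z\). All other rows of \(B_{\mathcal H_a}z\) vanish. Hence \(b_i^{\mathcal H_a}=b_i\) for \(i\in C_a\) and \(b_i^{\mathcal H_a}=0\) otherwise. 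Substituting this into \eqref{eq:partial-noether} gives \eqref{eq:component-noether}.

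For the four sides, I would read off the diagonal entries of \eqref{eq:component-noether}, as in the derivation of \eqref{eq:side-totals}. By complementarity, \(\ell_i>0\) forces \(x_i=0\) and \(r_i>0\) forces \(x_i=1\). Therefore
\[
\sum_{i\in C_a}x_ib_{ix}=\sum_{i\in C_a}r_i=\alpha_a\Delta>0,
\]
so \(C_a\) has a positive reaction on the right side. Force balance within \(C_a\) then gives \(\sum_{C_a}\ell_i=\sum_{C_a}r_i>0\), so the left side is also reached. The bottom and top sides follow in the same way from the \(y\)-entry.

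The bound \(c\le2\) is the step I expect to be the real obstacle, because the moment identities alone give no contradiction: \(\sum_a\alpha_a=1\) is compatible with any \(c\). My plan is topological. For each component, take the union \(R_a\) of its positive triangles, viewed as closed planar triangles. Because the hypergraph is connected, \(R_a\) is a connected compact set, and it touches all four sides. It therefore contains a path \(\gamma_a^{h}\) from the left side to the right side and a path \(\gamma_a^{v}\) from the bottom to the top. The standard crossing lemma in the square then shows that \(R_a\cap R_{a'}\neq\emptyset\) for all \(a\ne a'\).

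Components are vertex-disjoint, and by \Cref{prop:empty-active} every positive triangle is empty. So two components can only meet where an edge of one crosses an edge of the other transversally. For \(c\ge3\) I would then try to show that three pairwise-crossing families of empty minimal triangles force an area contradiction. The idea is to take the crossing edges among the three components and find a point of one component strictly inside a positive triangle of another, or to produce a triangle of area less than \(\Delta\), using the cyclic order of the crossings along \(\partial[0,1]^2\) (each component alternates left, bottom, right, top).

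If this route fails, my fallback is a counting argument on corners. A component whose boundary-touching vertices avoid the corners needs four distinct side vertices. Two vertex-disjoint components then need eight such vertices, and one would look for a contradiction from the restriction that each side carries few vertices (three collinear points on a side give zero area). I am not confident which of the two routes closes the argument, so this is the part of the proof I would work out first.
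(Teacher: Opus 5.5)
Your derivation of \eqref{eq:component-noether} is correct and is essentially the paper's argument, and so is your deduction of a positive reaction on each of the four sides. The gap is the bound \(c\le2\): neither route you sketch is completed.

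The topological route only gives \(R_a\cap R_{a'}\neq\emptyset\). The area contradiction you hope for from three pairwise-crossing components is never established, and it is not needed. The fallback is aimed at the wrong case. With two components your count gives eight boundary vertices, which is exactly the available maximum, so it reaches equality rather than a contradiction. That is consistent with the statement: \(c=2\) is allowed, and the paper goes on to describe that case as two axial crosses. You also restrict to components that avoid corners, which leaves the general case open.

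The missing idea is a direct count of point--side incidences rather than of points. Since \(\Delta>0\), no three points of \(P\) are collinear, so each side of the square contains at most two points of \(P\). Hence there are at most \(4\cdot2=8\) pairs consisting of a point and a side containing it; a corner simply contributes two such pairs.

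You have already shown that, for each of the four sides, some vertex \(i\in C_a\) carries a positive multiplier for that side. By complementarity, that vertex lies on that side. This gives four distinct incidences for each component. Distinct positive components are vertex-disjoint, so these incidences are also distinct across components. Therefore \(4c\le8\).

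This argument handles corners automatically and closes the proof without any planar topology.
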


\begin{proof}
Apply \Cref{thm:partial-noether} to the triangles contained in \(C_a\).  At a
vertex of \(C_a\), no other positive component contributes a stress row.
Hence the algebraic partial load is the full KKT boundary load there, and it
vanishes away from \(C_a\).  This proves \eqref{eq:component-noether}.

The same argument that gave \eqref{eq:side-totals}, now with mass
\(\alpha_a\), shows that the component has side total
\(\alpha_a\Delta>0\) on each of the four sides.  Since every triangle
determined by \(P\) has area at least \(\Delta>0\), no three points are
collinear.  Each side of the square therefore contains at most two points.
There are at most eight point--side incidences, whereas \(c\) components
require at least \(4c\).  Thus \(4c\le8\).
\end{proof}

If \(c=2\), equality holds in the incidence count.  Each component has exactly
one positive contact on each side.  The off-diagonal moments in
\eqref{eq:component-noether} show that the left and right contacts of a
component have a common height, and its bottom and top contacts have a common
horizontal coordinate.  Thus its four reactions form an axial cross:
\[
 (0,v_a),\quad(1,v_a),\quad(u_a,0),\quad(u_a,1).
\]
The following elementary estimate records the geometry of two such crosses.

\begin{proposition}[Two-cross estimate]
\label{prop:two-cross}
If a positive KKT stress has two components, then
\[
 \Delta\le\frac1{16}.
\]
\end{proposition}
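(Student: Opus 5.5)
The plan is to use only the axial-cross structure recorded just before the statement, together with the fact that every triangle on \(P\) has area at least \(\Delta\). We then exhibit a few explicit triangles among the eight cross points whose areas multiply to at most \(1/256\). Write the two crosses as
\[
 (0,v_a),\ (1,v_a),\ (u_a,0),\ (u_a,1),\qquad a=1,2 .
\]
These points are distinct, since the two components have disjoint vertex sets. After relabelling, and reflecting the square if needed, assume \(v_1<v_2\) and \(u_1<u_2\); equality would put three points on a line, or would identify two points. Set
\[
 \alpha=v_1,\quad \beta=v_2-v_1,\quad \gamma=1-v_2,
 \qquad
 \pi=u_1,\quad \kappa=u_2-u_1,\quad \sigma=1-u_2,
\]
so that \(\alpha+\beta+\gamma=1=\pi+\kappa+\sigma\).

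First I use the two left contacts \((0,v_1),(0,v_2)\) as a base of length \(\beta\) on the line \(x=0\). Taking \((u_1,0)\) as the apex gives a triangle of area \(\beta\pi/2\). Using instead the right contacts \((1,v_1),(1,v_2)\) as the base with apex \((u_2,0)\) gives area \(\beta\sigma/2\). Hence
\[
 \Delta\le \tfrac12\beta\min(\pi,\sigma).
\]
Symmetrically, the bottom pair \((u_1,0),(u_2,0)\) with apex \((0,v_1)\), and the top pair with apex \((0,v_2)\), give
\[
 \Delta\le\tfrac12\kappa\min(\alpha,\gamma).
\]
If some \(v_a\) or \(u_a\) equals \(0\) or \(1\), one of these triangles degenerates. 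That contradicts \(\Delta>0\), so all the parameters are positive.

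Multiplying the two bounds and regrouping gives
\[
 \Delta^2\le\tfrac14\,\bigl[\beta\min(\alpha,\gamma)\bigr]\,\bigl[\kappa\min(\pi,\sigma)\bigr].
\]
Since \(\min(\alpha,\gamma)\le(\alpha+\gamma)/2=(1-\beta)/2\), we get \(\beta\min(\alpha,\gamma)\le\beta(1-\beta)/2\le1/8\), and likewise \(\kappa\min(\pi,\sigma)\le1/8\). Therefore \(\Delta^2\le1/256\), that is, \(\Delta\le1/16\).

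The only step needing care is the pairing. Each one-dimensional gap (\(\beta\) or \(\kappa\)) has to be matched with a transverse distance (\(\min(\pi,\sigma)\) or \(\min(\alpha,\gamma)\)) from the other family, so that regrouping the product gives two expressions of the form \(t\,\min(\cdot,\cdot)\) whose arguments sum to \(1-t\). Getting this bookkeeping right is the main obstacle. Beyond that, I only need to check that the chosen apexes really lie at the stated distances from the base lines, which is immediate from the coordinates.
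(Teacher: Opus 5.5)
Your proof is correct and is essentially the paper's argument. The same triangles (the two contacts on a vertical side, with the nearest bottom/top contact as apex, and symmetrically for the horizontal sides) give $\Delta\le\tfrac12\beta\min(\pi,\sigma)$ and $\Delta\le\tfrac12\kappa\min(\alpha,\gamma)$. Only your last step differs: you multiply and regroup into $\beta\min(\alpha,\gamma)\le\tfrac18$ and $\kappa\min(\pi,\sigma)\le\tfrac18$, whereas the paper relaxes to $\beta\le1-2\min(\alpha,\gamma)$ and $\kappa\le1-2\min(\pi,\sigma)$ and then splits on which minimum is smaller.

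One minor nit concerns the corner case. If a single point sits at a corner and serves as two contacts of the same component, your four triangles degenerate only by repeating a vertex, which is not a contradiction by itself. It is cleaner to exclude $v_a,u_a\in\{0,1\}$ directly: for example, $v_a=0$ puts $(0,0)$, $(1,0)$ and the other component's bottom contact on a single side, giving three collinear points.
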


\begin{proof}
Let
\[
 a=\min_{j=1,2}\min(u_j,1-u_j),\qquad
 d=\min_{j=1,2}\min(v_j,1-v_j).
\]
Positive minimum area implies \(a,d>0\).  Both heights lie in
\([d,1-d]\), so \(|v_1-v_2|\le1-2d\).  Choose the vertical side nearest to
the bottom--top contact whose distance to a vertical side is \(a\).  The two
component contacts on that vertical side, together with this bottom--top
contact, determine a triangle of area at most
\(a(1-2d)/2\).  Similarly, \(|u_1-u_2|\le1-2a\); using the horizontal side
nearest to the left--right contact that realizes \(d\) gives a triangle of
area at most \(d(1-2a)/2\).  Therefore
\[
 \Delta\le\frac12\min\{a(1-2d),\,d(1-2a)\}.
\]
For completeness, suppose \(a\le d\).  The difference of the two expressions
inside the minimum is \(a-d\), so the minimum is
\(a(1-2d)\le a(1-2a)\le1/8\).  The case \(d\le a\) is symmetric.  Multiplying
by \(1/2\) gives the assertion.
\end{proof}

\begin{remark}
A sharper strict inequality follows when the full inactive-triangle
conditions and the existence of genuine internal positive stresses are used.
We do not need that refinement below.  More importantly,
\eqref{eq:component-bound} does not force the positive support to be connected.
\end{remark}

\section{The shared kernel and hybrid observability}
\label{sec:shared}

\subsection{Why one stress is not intrinsic}

Different points of \(\KKT(P)\) can yield stress matrices with different
kernels.  The degeneracy common to the configuration is therefore
\begin{equation}
 \Ksh(P)=\bigcap_{q\in\KKT(P)}\ker B(q).           \label{eq:shared-kernel}
\end{equation}
Since \(B(q)\one=0\) for every \(q\), constants always belong to \(\Ksh\).
No parity rule for a single skew matrix survives the intersection: even when
each individual kernel has even dimension, the common kernel need not.

\begin{proposition}[Affine-hull description]
\label{prop:shared-affine}
Let \(q^\circ\in\relint\KKT(P)\), let \(T\) be the translation space of the
affine hull of \(\KKT(P)\), and write
\[
 E_d=\sum_t d_tB_t\qquad(d\in T)
\]
for the stress variation.  Then
\[
 \Ksh=\ker B(q^\circ)\cap\bigcap_{d\in T}\ker E_d.
\]
\end{proposition}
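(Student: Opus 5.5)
The plan is to prove the two inclusions separately, using only that \(q\mapsto B(q)\) is linear in the triangle multipliers and that \(\KKT(P)\) is a polytope whose affine hull is \(q^\circ+T\).

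For \(\supseteq\), let \(v\in\ker B(q^\circ)\) with \(E_dv=0\) for every \(d\in T\). Any \(q\in\KKT(P)\) lies in the affine hull, so \(q-q^\circ=d\in T\). The stress map depends only on the triangle coordinates and is linear there, hence
\[
 B(q)=B(q^\circ)+E_{q-q^\circ}.
\]
It follows that \(B(q)v=0\). Since \(q\) was arbitrary, \(v\in\Ksh\).

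For \(\subseteq\), let \(v\in\Ksh\). Taking \(q=q^\circ\) gives \(v\in\ker B(q^\circ)\). For the variations, fix \(d\in T\). Because \(q^\circ\) is in the relative interior, there is \(\varepsilon>0\) with \(q^\circ+\varepsilon d\in\KKT(P)\). Then \(B(q^\circ+\varepsilon d)=B(q^\circ)+\varepsilon E_d\) annihilates \(v\). Combined with \(B(q^\circ)v=0\), this gives \(\varepsilon E_dv=0\), so \(E_dv=0\). Alternatively, \(T\) is spanned by differences \(q-q'\) of points of \(\KKT(P)\), and linearity of \(d\mapsto E_d\) gives \(E_{q-q'}v=B(q)v-B(q')v=0\); this version does not even need the relative-interior choice.

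The only point that needs care is the relative-interior step: \(q^\circ+\varepsilon d\) must remain in the polytope for every direction \(d\) of the affine hull. This is the defining property of the relative interior of a convex set, and it holds because \(\KKT(P)\) is nonempty by \Cref{thm:kkt} and compact. The rest is bookkeeping. Boundary coordinates of \(d\) do not enter \(E_d\), and I will note that this is harmless.
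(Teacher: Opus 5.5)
Your proof is correct and follows the paper's argument: linearity of \(q\mapsto B(q)\) gives \(\supseteq\), and a relative-interior perturbation gives \(\subseteq\). The paper uses \(q^\circ\pm\varepsilon d\) and takes their sum and difference, whereas you combine \(q^\circ+\varepsilon d\) with \(B(q^\circ)v=0\), which is the same step; your alternative via differences \(q-q'\) spanning \(T\) is also valid and shows the identity holds for any \(q^\circ\in\KKT(P)\). Note that the perturbation property is simply the definition of relative interior, so compactness plays no role there.
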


\begin{proof}
For \(d\in T\), relative interior gives
\(q^\circ\pm\varepsilon d\in\KKT(P)\) for all sufficiently small
\(\varepsilon>0\).  A vector killed by every actual stress is therefore
killed by the sum and difference of
\(B(q^\circ+\varepsilon d)\) and
\(B(q^\circ-\varepsilon d)\).  This proves one inclusion.  Conversely, every
\(q\in\KKT(P)\) has the form \(q^\circ+d\), and linearity gives the other.
\end{proof}

\begin{theorem}[Finite actual realization]
\label{thm:finite-stack}
There are actual KKT points \(q_1,\ldots,q_s\in\relint\KKT(P)\) such that
\begin{equation}
 \Ksh=\bigcap_{\alpha=1}^s\ker B(q_\alpha).        \label{eq:finite-stack}
\end{equation}
One may take \(s\le n-\dim\Ksh\).
\end{theorem}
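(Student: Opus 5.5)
We prove the statement with a greedy descending-chain argument on kernels, using only actual relative-interior multipliers. Fix \(q_1=q^\circ\in\relint\KKT(P)\) and set \(K_1=\ker B(q_1)\). Inductively, suppose \(q_1,\ldots,q_r\in\relint\KKT(P)\) have been chosen and put \(K_r=\bigcap_{\alpha\le r}\ker B(q_\alpha)\). Clearly \(\Ksh\subseteq K_r\). If equality holds, we stop. Otherwise there is \(v\in K_r\setminus\Ksh\), so some \(q\in\KKT(P)\) has \(B(q)v\ne0\).

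The step to secure is that such a \(q\) can be replaced by a relative-interior point with the same effect. Consider \(q_{r+1}=(1-\varepsilon)q^\circ+\varepsilon q\) for small \(\varepsilon\in(0,1)\). It lies in \(\relint\KKT(P)\), since it is a convex combination with a relative-interior point carrying positive weight. By linearity of \(q\mapsto B(q)\),
\[
 B(q_{r+1})v=(1-\varepsilon)B(q^\circ)v+\varepsilon B(q)v=\varepsilon B(q)v\ne0,
\]
because \(v\in K_r\subseteq\ker B(q^\circ)\). Hence \(v\notin\ker B(q_{r+1})\), and \(K_{r+1}\subsetneq K_r\), so \(\dim K_{r+1}\le\dim K_r-1\).

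For the count, start from \(\dim K_1\le n\); more precisely \(\dim K_1\le n-\rank B(q^\circ)\). Each step lowers the dimension by at least one, and the process halts exactly when the dimension reaches \(\dim\Ksh\), since all \(K_r\) contain \(\Ksh\). So at most \(\dim K_1-\dim\Ksh\) additional points are needed, giving \(s\le 1+\dim K_1-\dim\Ksh\). This is at most \(n-\dim\Ksh\) as soon as \(\dim K_1\le n-1\), that is, once \(B(q^\circ)\ne0\). That holds because \(x^TB(q^\circ)y=2\Delta>0\).

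The only real obstacle is this bookkeeping: ensuring the initial point already cuts the dimension by one, so that the count is \(n-\dim\Ksh\) rather than \(n-\dim\Ksh+1\). The nonvanishing of \(B(q^\circ)\) supplies it. Alternatively, one could invoke \Cref{prop:shared-affine} and pick a basis of \(T\), perturbing \(q^\circ\) in each direction. The greedy argument, however, gives the sharper bound directly and keeps every \(q_\alpha\) an actual KKT point in the relative interior.
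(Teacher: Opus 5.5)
Your proof is correct and follows essentially the paper's route: a greedy descending chain of kernels, where each new multiplier is a convex combination \((1-\varepsilon)q^\circ+\varepsilon q\) of a fixed relative-interior point and a witnessing \(q\), and therefore itself lies in \(\relint\KKT(P)\). The only difference is bookkeeping: the paper starts from \(V_0=\R^n\) and chooses \(\varepsilon\) to avoid cancellation, whereas you seed the chain with \(q^\circ\), which makes \(B(q_{r+1})v=\varepsilon B(q)v\ne0\) automatic at the cost of one extra point that you correctly recover from \(B(q^\circ)\ne0\) (via \(x^TB(q^\circ)y=2\Delta>0\)).
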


\begin{proof}
Fix \(q_*\in\relint\KKT(P)\), and begin with \(V_0=\R^n\).  If
\(V_j\ne\Ksh\), choose \(h\in V_j\setminus\Ksh\).  Some
\(\widetilde q\in\KKT(P)\) satisfies \(B(\widetilde q)h\ne0\).  A sufficiently
small convex combination
\[
 q_{j+1}=(1-\varepsilon)q_*+\varepsilon\widetilde q
\]
lies in the relative interior and can be chosen so that
\(B(q_{j+1})h\ne0\).  Hence
\(V_{j+1}=V_j\cap\ker B(q_{j+1})\) has smaller dimension.  Each step lowers
the dimension by at least one and the process stops at \(\Ksh\).
\end{proof}

This is an exact finite-dimensional statement, not a sampling assertion.
Every matrix in the realizing family is the stress of a normalized
nonnegative KKT multiplier.

\subsection{A canonical Gram operator}

Let
\[
 \mathcal L_B(P)=\Span\{B(q):q\in\KKT(P)\}
 \subset\mathfrak{so}(n).
\]
Give \(\mathfrak{so}(n)\) the Frobenius inner product, choose a
Frobenius-orthonormal basis \(C_1,\ldots,C_s\) of \(\mathcal L_B(P)\), and
define
\begin{equation}
 \Gsh=\sum_{\alpha=1}^sC_\alpha^TC_\alpha.         \label{eq:shared-gram}
\end{equation}

\begin{theorem}[Shared Gram identity]
\label{thm:shared-gram}
The operator \(\Gsh\) is independent of the orthonormal basis,
\(\Gsh\succeq0\), and
\begin{equation}
 \ker\Gsh=\Ksh,\qquad
 \Ksh^\perp=\sum_{C\in\mathcal L_B(P)}\im C.       \label{eq:gram-kernel}
\end{equation}
\end{theorem}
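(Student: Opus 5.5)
The plan is to treat \(\Gsh\) as the Gram operator of the finite-dimensional space \(\mathcal L_B(P)\) and read every assertion off the quadratic form it defines.

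\textbf{Basis independence.} Write \(\Gsh\) intrinsically. For \(h\in\R^n\) I will show
\[
 h^T\Gsh h=\sum_\alpha\|C_\alpha h\|^2 .
\]
The right-hand side is the squared Hilbert--Schmidt norm of the evaluation map \(\mathrm{ev}_h:\mathcal L_B(P)\to\R^n\), \(C\mapsto Ch\). This norm can be computed in any orthonormal basis of the domain, so it does not depend on the basis. The same conclusion follows directly: if \(C'_\beta=\sum_\alpha O_{\alpha\beta}C_\alpha\) with \(O\) orthogonal, then
\[
 \sum_\beta C'^T_\beta C'_\beta=\sum_{\alpha,\gamma}\Bigl(\sum_\beta O_{\alpha\beta}O_{\gamma\beta}\Bigr)C_\alpha^TC_\gamma=\sum_\alpha C_\alpha^TC_\alpha .
\]
\(\Gsh\) is symmetric, and the displayed form is nonnegative, so \(\Gsh\succeq0\).

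\textbf{Kernel.} Since \(\Gsh\succeq0\), we have \(\Gsh h=0\) iff \(h^T\Gsh h=0\), iff \(C_\alpha h=0\) for every \(\alpha\). Because the \(C_\alpha\) span \(\mathcal L_B(P)\), this holds iff \(Ch=0\) for every \(C\in\mathcal L_B(P)\). Each \(B(q)\) lies in \(\mathcal L_B(P)\), and conversely every element of \(\mathcal L_B(P)\) is a linear combination of the \(B(q)\). Therefore this condition is equivalent to \(h\in\bigcap_q\ker B(q)=\Ksh\) by \eqref{eq:shared-kernel}.

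\textbf{Orthogonal complement.} Every \(C\) is skew, so \(\ker C=(\im C^T)^\perp=(\im C)^\perp\). Taking the intersection over the span,
\[
 \Ksh=\bigcap_{C}\ker C=\Bigl(\sum_C\im C\Bigr)^\perp .
\]
Taking perpendiculars in finite dimensions gives \(\Ksh^\perp=\sum_C\im C\). Equivalently, \(\im\Gsh=\ker\Gsh^\perp\).

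No step is a real obstacle. The only care needed is to note that passing from the spanning set \(\{B(q)\}\) to the whole span \(\mathcal L_B(P)\) does not change the common kernel. One should also note that \(\mathcal L_B(P)\) is nonzero (since \(x^TBy=2\Delta\ne0\)), so the basis is nonempty, although the identities hold trivially otherwise.
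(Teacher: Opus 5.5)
Your proposal is correct and follows the paper's proof essentially step for step: the orthogonal change-of-basis computation gives basis independence, the identity \(h^T\Gsh h=\sum_\alpha\|C_\alpha h\|^2\) gives positivity and \(\ker\Gsh=\Ksh\), and the orthogonal-complement identity combined with skew symmetry gives the image formula. Your extra remarks (the Hilbert--Schmidt reading, and the explicit justification that \(\Gsh h=0\) iff \(h^T\Gsh h=0\) for a positive semidefinite operator) only spell out steps the paper leaves implicit.
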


\begin{proof}
If \(D_\beta=\sum_\alpha O_{\beta\alpha}C_\alpha\) is another orthonormal
basis, orthogonality of \(O\) gives
\[
 \sum_\beta D_\beta^TD_\beta
 =\sum_{\alpha,\gamma}
   \left(\sum_\beta O_{\beta\alpha}O_{\beta\gamma}\right)
   C_\alpha^TC_\gamma
 =\sum_\alpha C_\alpha^TC_\alpha.
\]
Moreover,
\[
 h^T\Gsh h=\sum_\alpha\|C_\alpha h\|^2,
\]
so the kernel is the intersection of the kernels of the stress span, which is
\eqref{eq:shared-kernel}.  The image formula is the orthogonal-complement
identity
\((\bigcap\ker C_\alpha)^\perp=\sum\im C_\alpha^T\), together with
\(C_\alpha^T=-C_\alpha\).
\end{proof}

The kernel and stress span are intrinsic.  An unnormalized sum formed from an
arbitrary finite stack has the same kernel when the stack realizes \(\Ksh\),
but generally defines a different metric away from that kernel.  The
orthonormal construction \eqref{eq:shared-gram} removes this ambiguity.

Every \(C\in\mathcal L_B(P)\) has a uniquely determined load
\(\ell(C)\in\C^n\) with
\[
 Cz=2i\ell(C).
\]
Indeed, a linear relation among stress matrices gives the same relation among
loads after evaluation at \(z\).  Thus the actual stacked equilibrium
reconstructs \(z\) modulo the complexification of \(\Ksh\).  We use this only
on the consistent KKT locus; no variational principle on an off-shell stack
is asserted.

\subsection{The tight-row completion}

Let
\[
 V=\R^n\oplus\R^n
\]
be the real velocity space, ordered by the two coordinate vectors.  We use
\(J_{\rm tight}^{\rm tr}\) for the translation-invariant Jacobian of
\emph{all} tight triangle rows and tight square rows.  For boundary contacts,
translation invariance means that we retain the differences of normal
velocities among vertical contacts and, separately, among horizontal
contacts.  The two global translations
\[
 \trans=\Span\{(\one,0),(0,\one)\}
\]
therefore lie in its kernel.  Absolute placement in the square can be restored
after choosing a translation gauge.

Set \(J=J_{\rm tight}^{\rm tr}\), and let
\begin{equation}
 P_{\rm tight}=J^T(JJ^T)^+J             \label{eq:tight-projector}
\end{equation}
be the orthogonal projector onto the tight-row space.  Define
\begin{equation}
 \Ghyb=I_2\otimes\Gsh+P_{\rm tight}.     \label{eq:hybrid-gram}
\end{equation}

\begin{figure}[t]
\centering
\begin{tikzpicture}[>=Latex,node distance=10mm and 17mm,
  box/.style={draw,rounded corners,align=center,minimum width=3.2cm,
              minimum height=9mm,fill=white}]
  \node[box] (face) {full KKT face\\stress span};
  \node[box,right=of face] (ksh) {shared fibre\\\(\Ksh\oplus\Ksh\)};
  \node[box,below=of face] (tight) {all tight rows\\\(\ker J_{\rm tight}^{\rm tr}\)};
  \node[box,right=of tight] (res) {invisible residual\\\(\ker\Ghyb\)};
  \draw[->,thick] (face)--node[above]{shared kernel}(ksh);
  \draw[->,thick] (tight)--node[below]{intersection}(res);
  \draw[->,thick] (ksh)--(res);
\end{tikzpicture}
\caption{The stress family detects the orthogonal complement of the shared
kernel.  Tight determinant and boundary rows then test the shared fibre.  The
undetected part is their exact intersection.}
\label{fig:hybrid}
\end{figure}

\begin{theorem}[Hybrid kernel identity]
\label{thm:hybrid-kernel}
The hybrid operator is positive semidefinite and
\begin{equation}
 \ker\Ghyb
 =(\Ksh\oplus\Ksh)\cap\ker J_{\rm tight}^{\rm tr}. \label{eq:hybrid-kernel}
\end{equation}
\end{theorem}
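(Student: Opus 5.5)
The plan is to treat \(\Ghyb\) as a sum of two positive semidefinite operators on \(V=\R^n\oplus\R^n\). For such a sum, the kernel is the intersection of the two kernels. Identifying each kernel separately then gives \eqref{eq:hybrid-kernel}.

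First, positivity. For \(w=(\xi,\eta)\in V\), the block structure of \(I_2\otimes\Gsh\) gives
\[
 w^T(I_2\otimes\Gsh)w=\xi^T\Gsh\xi+\eta^T\Gsh\eta .
\]
Each of these two terms is nonnegative by \Cref{thm:shared-gram}. The matrix \(P_{\rm tight}\) in \eqref{eq:tight-projector} is symmetric and idempotent, because \(J^T(JJ^T)^+J\) is the standard orthogonal projector onto \(\im J^T\) (Moore--Penrose identities). Hence \(w^TP_{\rm tight}w=\|P_{\rm tight}w\|^2\ge0\). Adding the two forms shows \(\Ghyb\succeq0\).

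Second, the kernel. If \(\Ghyb w=0\), then \(w^T\Ghyb w=0\). Both summands are nonnegative, so each vanishes. For a positive semidefinite \(G\), the condition \(w^TGw=0\) is equivalent to \(Gw=0\), as one sees from the square root or the spectral decomposition. So \((I_2\otimes\Gsh)w=0\) and \(P_{\rm tight}w=0\). The converse inclusion is immediate.

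It remains to identify the two kernels.
\begin{itemize}
\item For the first, \(\ker(I_2\otimes\Gsh)=\ker\Gsh\oplus\ker\Gsh\), and this equals \(\Ksh\oplus\Ksh\) by \eqref{eq:gram-kernel}.
\item For the second, \(\ker P_{\rm tight}=(\im J^T)^\perp=\ker J\), with \(J=J_{\rm tight}^{\rm tr}\).
\end{itemize}

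The only point needing care is this projector identification when \(JJ^T\) is singular. Redundant tight rows, for instance dependent determinant gradients or the boundary difference rows, make this the generic situation. I would handle it with the SVD of \(J\). Writing \(J=U\Sigma V^T\), one gets \(J^T(JJ^T)^+J=V\Sigma^T(\Sigma\Sigma^T)^+\Sigma V^T\), which is the orthogonal projector onto the right singular vectors with nonzero singular values. Its kernel is therefore \(\ker J\), independently of any redundancy among the rows.

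Nothing else in the argument is delicate. The translations \(\trans\) lie in both kernels, since \(\one\in\Ksh\) and \(J\) annihilates \(\trans\) by construction. This is consistent with the statement but is not needed for the proof.
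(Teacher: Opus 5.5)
Your proposal is correct and follows essentially the paper's argument: both summands are positive semidefinite, the kernel of their sum is the intersection of their kernels, and those kernels are \(\Ksh\oplus\Ksh\) by \Cref{thm:shared-gram} and \(\ker J\) for the projector. Your SVD check that \(J^T(JJ^T)^+J\) is the orthogonal projector with kernel \(\ker J\), even when the tight rows are redundant, spells out a step the paper simply asserts.
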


\begin{proof}
The projector in \eqref{eq:tight-projector} is positive semidefinite and has
kernel \(\ker J\).  Both summands of \eqref{eq:hybrid-gram} are positive
semidefinite.  A vector has zero quadratic form for their sum precisely when
it has zero quadratic form for each summand.  The first kernel is
\(\Ksh\oplus\Ksh\) by \Cref{thm:shared-gram}, and the second is \(\ker J\).
\end{proof}

\begin{proposition}[Hybrid rank formula]
\label{prop:hybrid-rank}
Let \(\theta_1,\ldots,\theta_s\) be actual KKT multipliers realizing the shared kernel
as in \Cref{thm:finite-stack}, and set
\[
 \mathbb B h=(B(\theta_1)h,\ldots,B(\theta_s)h),
 \qquad k=\dim\Ksh.
\]
Then
\begin{equation}
 \rank\begin{bmatrix}I_2\otimes\mathbb B\\ J_{\rm tight}^{\rm tr}\end{bmatrix}
 =2(n-k)+
   \rank\!\left(J_{\rm tight}^{\rm tr}\big|_{\Ksh\oplus\Ksh}\right).
                                                        \label{eq:hybrid-rank}
\end{equation}
In particular, hybrid observability is equivalent to the left-hand side
having rank \(2n-2\).
\end{proposition}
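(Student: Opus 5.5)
The plan is a rank--nullity computation on the stacked operator
\[
 M=\begin{bmatrix}I_2\otimes\mathbb B\\ J\end{bmatrix},\qquad J=J_{\rm tight}^{\rm tr},
\]
viewed as a map on \(V=\R^n\oplus\R^n\). We compute \(\ker M\) first and then read off the rank.

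\emph{Step 1: kernel of the stress block.} The kernel of \(I_2\otimes\mathbb B\) is \(\ker\mathbb B\oplus\ker\mathbb B\). Here \(\ker\mathbb B=\bigcap_\alpha\ker B(\theta_\alpha)=\Ksh\) by the choice of the realizing family in \Cref{thm:finite-stack}. Hence
\[
 \ker M=(\Ksh\oplus\Ksh)\cap\ker J ,
\]
which is also \(\ker\Ghyb\) by \Cref{thm:hybrid-kernel}.

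\emph{Step 2: rank--nullity.} Applying rank--nullity to \(M\) on \(V\) gives
\[
 \rank M=2n-\dim\bigl((\Ksh\oplus\Ksh)\cap\ker J\bigr).
\]
Next, restrict \(J\) to the \(2k\)-dimensional subspace \(\Ksh\oplus\Ksh\). Its kernel there is exactly the intersection above, so
\[
 \dim\bigl((\Ksh\oplus\Ksh)\cap\ker J\bigr)=2k-\rank\bigl(J|_{\Ksh\oplus\Ksh}\bigr).
\]
Substituting this into the previous display yields \eqref{eq:hybrid-rank}. The same identity can be checked from the block side: \(\im M\) contains \(\im(I_2\otimes\mathbb B)\), which has dimension \(2(n-k)\), and \(J\) contributes new rank exactly on the complement \(\Ksh\oplus\Ksh\).

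\emph{Step 3: the observability criterion.} Here "hybrid observability" means that \(\ker\Ghyb\) is as small as possible. The two translations in \(\trans\) always lie in \(\ker J\) by construction. They also lie in \(\Ksh\oplus\Ksh\), since \(\one\in\Ksh\). Hence \(\trans\subseteq\ker M\) always, and \(\rank M\le 2n-2\). Observability means \(\ker M=\trans\), which by rank--nullity is equivalent to \(\rank M=2n-2\).

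The only point that needs care is Step 3. One must confirm that observability is defined as \(\ker\Ghyb=\trans\), that is, as the vanishing of the translation-quotiented residual, rather than some stronger condition. One must also confirm that both translations genuinely lie in \(\ker J\): this holds because the boundary rows were replaced by differences of normal velocities, so it follows from the construction of \(J\). The rest is linear algebra.
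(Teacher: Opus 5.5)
Your proposal is correct and follows essentially the paper's route: the first block has kernel \(\Ksh\oplus\Ksh\) by \Cref{thm:finite-stack}, and the stacked rank equals \(\rank(I_2\otimes\mathbb B)+\rank(J|_{\Ksh\oplus\Ksh})\). You obtain that identity from rank--nullity applied twice, the paper by splitting the domain into \(\ker(I_2\otimes\mathbb B)\) and a complement, and the observability clause is then the rank--nullity form of \Cref{thm:hybrid-kernel} together with \(\trans\subseteq\ker M\). Only your auxiliary ``block side'' remark is loosely worded---\(\im M\) does not literally contain \(\im(I_2\otimes\mathbb B)\) (that is its projection to the first factor), and \(\Ksh\oplus\Ksh\) is the kernel of the first block rather than a complement---but your main argument does not rely on it.
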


\begin{proof}
The first block has kernel \(\Ksh\oplus\Ksh\) and rank \(2(n-k)\).
For any linear maps \(A,J\), the rank of the stacked map \((A,J)\) is
\(\rank A+\rank(J|_{\ker A})\): choose a complement of \(\ker A\), whose
image has a nonzero first component, while the image of \(\ker A\) has first
component zero.  Applying this observation proves \eqref{eq:hybrid-rank}.
The final assertion is the rank--nullity form of
\Cref{thm:hybrid-kernel}.
\end{proof}

\begin{definition}[Hybrid observability]
\label{def:hybrid-visible}
The configuration is \emph{hybrid observable} if
\[
 \ker\Ghyb=\trans.
\]
\end{definition}

Let \(\mathcal F\) be a specified \(C^1\) hybrid measurement map on a fixed
smooth orientation and tight-row stratum, with the current hybrid measurement
coordinates, whose derivative has the combined stress and tight-row space
used above.  After fixing two translation gauges, hybrid observability says
that \(D\mathcal F(P)\) is injective.  The constant-rank theorem then gives a
\(C^1\) local inverse from \(\mathcal F\)'s image near \(\mathcal F(P)\) to
the gauge slice near \(P\).  Thus the conclusion is conditional on the
chosen smooth measurement model and is purely local; no off-shell extension
of the KKT stress span, global inverse, or global action is asserted.

\begin{proposition}[An observable branch]
\label{prop:area-rigid-branch}
Suppose the kernel of the tight signed-area Jacobian consists exactly of
infinitesimal special-affine motions
\[
 v_i=Ap_i+c,\qquad A\in\mathfrak{sl}_2(\R),\quad c\in\R^2.
\]
If the translation-invariant boundary rows kill every such motion with
\(A\ne0\), then \(P\) is hybrid observable.
\end{proposition}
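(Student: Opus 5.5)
Since \(\ker\Ghyb=(\Ksh\oplus\Ksh)\cap\ker J_{\rm tight}^{\rm tr}\) by \Cref{thm:hybrid-kernel}, and \(\trans\) always lies in this intersection (constants lie in \(\Ksh\), and translations kill all translation-invariant rows), it suffices to show the reverse inclusion. The plan is to use only the second factor, \(\ker J_{\rm tight}^{\rm tr}\), together with the hypotheses; the stress factor \(\Ksh\oplus\Ksh\) is not needed.

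First, let \(v=(v_x,v_y)\in\ker J_{\rm tight}^{\rm tr}\). Since \(J_{\rm tight}^{\rm tr}\) contains all tight triangle rows, \(v\) lies in the kernel of the tight signed-area Jacobian. On the orientation cell, each \(A_t\) is \(\pm\tfrac12 D_t\), so the area rows and the signed-determinant rows have the same kernel. By hypothesis, therefore, \(v_i=Ap_i+c\) with \(A\in\mathfrak{sl}_2(\R)\) and \(c\in\R^2\).

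Second, the same \(v\) is also killed by the translation-invariant boundary rows, which are the remaining rows of \(J_{\rm tight}^{\rm tr}\). These rows kill every translation. By hypothesis, they kill no special-affine motion with \(A\ne0\). If \(A\ne0\), then \(v\) would be a special-affine motion with nonzero linear part killed by the boundary rows, contradicting the hypothesis. Hence \(A=0\) and \(v=c\) is a translation, so \(v\in\trans\). Combining this with the first inclusion gives \(\ker\Ghyb=\trans\), which is \Cref{def:hybrid-visible}.

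The main point needing care is the reading of the hypothesis "kill every such motion with \(A\ne0\)". It must mean that no motion \(v=Ap+c\) with \(A\ne0\) lies in the kernel of the boundary rows, for any choice of \(c\). Because the boundary rows are translation invariant, the constant \(c\) does not affect the boundary-row values. The condition therefore reduces to one on the linear part \(A\) alone: the linear map \(A\mapsto(\text{boundary rows applied to }Ap)\) is injective on \(\mathfrak{sl}_2(\R)\). Once this is stated, the argument is linear algebra, and the stress part of \(\Ghyb\) plays no role.
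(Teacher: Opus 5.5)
Your proof is correct and follows the paper's argument: by \Cref{thm:hybrid-kernel}, any vector in \(\ker\Ghyb\) lies in the kernel of the tight signed-area rows and of the translation-invariant boundary rows, and the two hypotheses force it to be a translation; the paper's proof likewise never uses the stress factor. Your explicit check that \(\trans\subseteq\ker\Ghyb\) is a useful addition the paper leaves implicit. So is your reading of ``kill'' as ``exclude from the kernel of the boundary rows'' (equivalently, injectivity of \(A\mapsto\) boundary rows applied to \(Ap\) on \(\mathfrak{sl}_2(\R)\)); this is the reading the paper's one-line proof tacitly uses.
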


\begin{proof}
Every vector in the right-hand side of \eqref{eq:hybrid-kernel} lies in the
kernel of the signed-area Jacobian and of the boundary rows.  The hypotheses
reduce it to a translation.
\end{proof}

The proposition closes the area-rigid, boundary-nondegenerate branch.  The
general existence of a hybrid-observable global optimizer is the principal
open conjecture stated in \Cref{sec:open}.

\section{Terminal residuals and shadow geometry}
\label{sec:terminal}

\subsection{A terminal representative}

Let
\[
 \mathcal M_n=\{P\in[0,1]^{2n}:\delta(P)=\Delta_n\}
\]
be the compact optimal level set.  For a fixed \(P\), the one-point feasible
set at vertex \(i\) is obtained by allowing \(p_i\) and the level to vary while
the other vertices are fixed and all signed triangle and square inequalities
are retained.  It is a polyhedron in \(\R^3\).

\begin{definition}
A configuration is \emph{fully pinned} if \((p_i,\Delta_n)\) is a vertex of
its one-point feasible polyhedron for every \(i\).
\end{definition}

\begin{lemma}[Strictly convex pinning selector]
\label{lem:fully-pinned}
Every optimal level set contains a fully pinned configuration.
\end{lemma}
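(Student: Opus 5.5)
The plan is to choose the configuration by maximizing the strictly convex selector
\(\Phi(P)=\sum_i\|p_i\|^2\) over \(\mathcal M_n\). Then I show that at a maximizer, failure of full pinning at some vertex \(i\) would produce a segment of optimal configurations through \(P\). Along that segment \(\Phi\) could be increased, which is impossible.

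First, existence. The set \(\mathcal M_n\) is the preimage of the closed point \(\{\Delta_n\}\) under the continuous map \(\delta\), intersected with the compact cube. It is therefore compact, and it is nonempty by definition of \(\Delta_n\). The function \(\Phi\) is continuous, so it attains a maximum at some \(P^*\in\mathcal M_n\). Since \(\Delta_n>0\), the configuration \(P^*\) lies in a well-defined orientation cell with signs \(s_t\), and the one-point polyhedra are formed with these signs.

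Next, fix a vertex \(i\) and let \(F_i\subset\R^3\) be its one-point feasible polyhedron in the variables \((p,\tau)\). Its constraints are of three kinds:
\begin{itemize}[leftmargin=2em]
\item \(\tfrac{s_t}{2}D_t\ge\tau\) for triangles \(t\ni i\), which are affine in \(p\) once the other vertices are frozen;
\item the same inequalities for \(t\not\ni i\), which are constant in \(p\);
\item the four square bounds on \(p\).
\end{itemize}
The key observation is that every \((p,\tau)\in F_i\) gives a configuration \(P'\), obtained by replacing \(p_i\) with \(p\), in which every triangle satisfies \(A_t(P')\ge\frac{s_t}{2}D_t(P')\ge\tau\). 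Hence \(\delta(P')\ge\tau\), and so \(\tau\le\Delta_n\) is a valid inequality on \(F_i\).

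Consequently \(G_i=F_i\cap\{\tau=\Delta_n\}\) is a face of \(F_i\) containing \((p_i^*,\Delta_n)\). A point of a face is a vertex of the polyhedron exactly when it is a vertex of that face. Moreover, every point of \(G_i\) corresponds to a configuration with \(\delta\ge\Delta_n\), hence with \(\delta=\Delta_n\), so it lies in \(\mathcal M_n\).

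Now suppose \((p_i^*,\Delta_n)\) is not a vertex of \(F_i\). Then it is not a vertex of \(G_i\), so there is \(w\ne0\) in \(\R^2\) with \(p_i^*\pm\varepsilon w\in G_i\) for all small \(\varepsilon>0\). The two moved configurations lie in \(\mathcal M_n\), and
\[
 \Phi\big(P^*\text{ with }p_i^*\pm\varepsilon w\big)-\Phi(P^*)
 =\pm2\varepsilon\,p_i^*\cdot w+\varepsilon^2\|w\|^2 .
\]
At least one sign makes this difference strictly positive, because the quadratic term is positive. This contradicts the maximality of \(P^*\). Since \(i\) was arbitrary, \(P^*\) is fully pinned.

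I expect the only delicate step to be the second one. It must be checked carefully that the signed-orientation inequalities defining \(F_i\) really certify \(\delta(P')\ge\tau\), including for triangles that avoid \(i\) and for points where some sign might a priori flip. The inequality \(A_t\ge\frac{s_t}{2}D_t\), valid for either sign of \(D_t\), handles this without any orientation-cell argument. Everything else is the standard fact that a strictly convex function has no maximizer in the relative interior of a segment.
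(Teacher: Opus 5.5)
Your proposal is correct and follows the paper's proof. You maximize \(\Phi\) over the compact set \(\mathcal M_n\), note that a non-vertex \((p_i,\Delta_n)\) would lie inside a segment of the one-point polyhedron whose endpoints stay at level \(\Delta_n\) and hence in \(\mathcal M_n\), and let strict convexity of \(\|p_i\|^2\) contradict maximality. Your face \(G_i=F_i\cap\{\tau=\Delta_n\}\) and the inequality \(A_t\ge\tfrac{s_t}{2}D_t\) spell out what the paper states tersely: neither endpoint can exceed level \(\Delta_n\), and replacing \(p_i\) preserves optimality.
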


\begin{proof}
Choose \(P^\Phi\in\mathcal M_n\) maximizing
\[
 \Phi(P)=\sum_i\|p_i\|^2.
\]
If \((p_i^\Phi,\Delta_n)\) were not a vertex of its one-point polyhedron, it
would lie in a nontrivial segment with feasible endpoints.  Neither endpoint
can have level greater than \(\Delta_n\), and replacing only \(p_i^\Phi\) by
either endpoint preserves feasibility at level \(\Delta_n\).  Strict
convexity of \(\|p_i\|^2\) makes \(\Phi\) larger at one endpoint, a
contradiction.
\end{proof}

Put
\[
 H_0=\Ksh/\langle\one\rangle,\qquad r=\dim H_0,
\]
and define the translation-quotiented residual
\begin{equation}
 W(P)=
 \frac{(\Ksh\oplus\Ksh)\cap\ker J_{\rm tight}^{\rm tr}}
      {\trans}
 \subset H_0\otimes\R^2.              \label{eq:terminal-W}
\end{equation}
A tensor \(h\otimes u\) is decomposable, or rank one.  As a velocity it has
the concrete form
\[
 v_i=h_i u:
\]
all vertices move parallel to the same physical direction \(u\), with scalar
speeds \(h_i\).

\begin{lemma}[Exact line motion]
\label{lem:exact-line}
Let \(v_i=h_i u\) represent a nontranslation element of \(W(P)\).  After
adding a translation to \(v\), there is \(\varepsilon>0\) such that
\[
 P+sv\in\mathcal M_n\qquad(|s|<\varepsilon).
\]
Every constraint tight at \(P\) remains exactly tight along this segment.
\end{lemma}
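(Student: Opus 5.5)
We compute each signed area along the segment exactly, using that determinant areas are quadratic polynomials. Write \(v=(h\otimes u)\) with \(h\in\Ksh\) and \(u=(u_1,u_2)\in\R^2\), so that the \(x\)-velocity is \(u_1h\) and the \(y\)-velocity is \(u_2h\). By \eqref{eq:stress-area},
\[
 s_tD_t(P+sv)=(x+su_1h)^TB_t(y+su_2h)
 =x^TB_ty+s\bigl(u_2x^TB_th+u_1h^TB_ty\bigr)+s^2u_1u_2\,h^TB_th .
\]
The quadratic term vanishes because \(B_t\) is skew. Hence every signed area is \emph{affine} in \(s\) along a line motion, and its derivative equals the first-order derivative \(DA_t(P)[v]\). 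This is the key observation. For every tight triangle \(t\), membership of \(v\) in \(\ker J_{\rm tight}^{\rm tr}\) gives \(DA_t(P)[v]=0\). Therefore \(A_t(P+sv)=\Delta\) for all \(s\) small enough that the orientation cell is preserved. Translations do not alter areas, so adding one is harmless.

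Next we treat the square rows. The translation-invariant boundary rows record only differences of normal velocities among vertical contacts, and separately among horizontal contacts. Killing these differences means that all contacts on vertical sides share one \(x\)-velocity \(\alpha\), and all contacts on horizontal sides share one \(y\)-velocity \(\beta\). One point needs care. The left and right contacts must both stay on their sides, so the common velocity must be zero, not merely equal. To arrange this, I subtract the translation \((\alpha,\beta)\) from \(v\). Every tight boundary coordinate then has zero normal velocity. Since \(v\) is linear in \(s\), each such coordinate stays exactly \(0\) or \(1\).

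Here is the step I expect to need care. The row set only encodes differences, so I must check that a single translation fixes both the left and the right contacts at once. This works because they share the same normal coordinate, \(x\). Note also that after this subtraction the motion is no longer of rank-one form, but it is still an affine-in-\(s\) segment, and the area computation above is unaffected by translations.

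Finally, the inactive constraints are strict at \(P\), so by continuity they remain strict for \(|s|<\varepsilon\). Concretely, inactive triangles keep area \(>\Delta\) and interior coordinates stay in \((0,1)\). Hence \(\delta(P+sv)=\Delta_n\), which gives \(P+sv\in\mathcal M_n\) with every tight constraint exactly tight. Both directions of \(s\) are allowed, because every relation used is an equality that is affine in \(s\).
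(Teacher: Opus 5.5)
Your proof is correct and follows essentially the same route as the paper. You subtract the common normal-velocity translation so that every tight square equality stays constant. You then note that the quadratic term of each determinant vanishes, so each tight area is affine in \(s\) with zero slope; you get this from skew symmetry (\(h^TB_th=0\)), while the paper gets it from \(\det(u,u)=0\), which is the same computation. Finally, the inactive constraints persist by continuity.
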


\begin{proof}
The boundary-difference rows imply that all normal velocities at vertical
contacts have a common horizontal component and that all normal velocities at
horizontal contacts have a common vertical component.  Subtract the
corresponding translation.  Every tight square equality is now constant on
the line.

For a triangle \(t=(i,j,k)\), the quadratic term of its determinant is
\[
 \det(v_j-v_i,v_k-v_i)
 =(h_j-h_i)(h_k-h_i)\det(u,u)=0.
\]
The determinant is therefore affine in \(s\).  Its first derivative vanishes
because \(v\in\ker J_{\rm tight}^{\rm tr}\), so every old tight triangle
remains at level \(\Delta_n\).  All other inequalities have positive slack
and remain feasible for small \(|s|\).  The old KKT stresses also remain
valid, since \(Bh=0\), although this last fact is not needed for local
feasibility.
\end{proof}

\begin{theorem}[Terminal rank-one exclusion]
\label{thm:terminal-selector}
There is a fully pinned maximizer \(P^\Phi\in\mathcal M_n\) such that
\begin{equation}
 W(P^\Phi)\cap
 \{h\otimes u:h\in H_0,\ u\in\R^2\}=\{0\}.        \label{eq:rank-one-free}
\end{equation}
\end{theorem}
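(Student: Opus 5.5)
We take \(P^\Phi\) to be the maximizer of \(\Phi(P)=\sum_i\|p_i\|^2\) over the compact level set \(\mathcal M_n\), exactly as in \Cref{lem:fully-pinned}; that lemma already shows \(P^\Phi\) is fully pinned. It then remains to show that \(W(P^\Phi)\) contains no nonzero decomposable tensor. We argue by contradiction. Suppose \(h\otimes u\ne0\) lies in \(W(P^\Phi)\), with \(h\in H_0\) and \(u\in\R^2\). Choose a representative \(h\in\Ksh\) that is not a multiple of \(\one\), and take \(u\ne0\). The velocity \(v_i=h_iu\) is then a nontranslation element of the residual.

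The first step is to apply \Cref{lem:exact-line}. After adding a suitable translation \(c\), the curve \(P(s)=P^\Phi+s(v+c)\) stays in \(\mathcal M_n\) for \(|s|<\varepsilon\). The key observation is that \(\Phi\) restricted to this segment is a quadratic polynomial in \(s\) with leading coefficient
\[
\sum_i\|h_iu+c\|^2.
\]
This coefficient is strictly positive unless \(h_iu+c=0\) for every \(i\). That would make \(h_i u\) independent of \(i\), so \(h\) would be constant, contradicting \(h\notin\langle\one\rangle\).

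So \(s\mapsto\Phi(P(s))\) is strictly convex on \((-\varepsilon,\varepsilon)\). A strictly convex function on an open interval cannot attain its maximum at the interior point \(s=0\): one of \(\Phi(P(\pm s))\) exceeds \(\Phi(P^\Phi)\). Since \(P(\pm s)\in\mathcal M_n\), this contradicts the maximality of \(P^\Phi\). Hence \eqref{eq:rank-one-free} holds.

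The step that needs care is justifying the translation adjustment inside \Cref{lem:exact-line}. Boundary rows are only imposed as differences, so \(v\) itself may push a contact point off its side. The added translation \(c\) restores exact boundary tightness, and the proof should confirm two things:
\begin{itemize}
\item \(c\) is determined solely by the common normal velocities, so it does not spoil the triangle conditions (translations preserve all areas);
\item the direction \(v+c\) is still nonconstant across vertices.
\end{itemize}
Both follow because translations lie in \(\trans\), which is quotiented out in \(W\), and because \(h\notin\langle\one\rangle\).

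One should also note that the argument uses only membership in \(\mathcal M_n\) and global maximality of \(\Phi\). Full pinning is an extra property supplied by the same selector, and is not needed for the exclusion itself.
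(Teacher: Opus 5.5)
Your proposal is correct and follows essentially the same route as the paper's proof. You select the \(\Phi\)-maximizer from \Cref{lem:fully-pinned}, move along \(\pm s\) times the translation-adjusted line from \Cref{lem:exact-line} while staying inside \(\mathcal M_n\), and contradict maximality through the positive quadratic coefficient, which the paper writes as \(\Phi(P+sv)+\Phi(P-sv)=2\Phi(P)+2s^2\|v\|^2\). Your explicit check that the adjusted velocity is nonzero because \(h\) is nonconstant is a detail the paper leaves implicit.
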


\begin{proof}
Take the maximizer from \Cref{lem:fully-pinned}.  If a nonzero decomposable
element \(v\) survived, \Cref{lem:exact-line} would give both
\(P^\Phi+sv\) and \(P^\Phi-sv\) in \(\mathcal M_n\) for small \(s\).  But
\[
 \Phi(P^\Phi+sv)+\Phi(P^\Phi-sv)
 =2\Phi(P^\Phi)+2s^2\|v\|^2.
\]
At least one of the two values is larger than \(\Phi(P^\Phi)\), a
contradiction.
\end{proof}

\begin{corollary}[Small shared kernel at the selected representative]
\label{cor:small-shared}
If the representative \(P^\Phi\) selected in
\Cref{thm:terminal-selector} satisfies
\(\dim\Ksh(P^\Phi)\le2\), then it is hybrid observable.
\end{corollary}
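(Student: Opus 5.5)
The plan is to show that when \(\dim\Ksh(P^\Phi)\le2\), every element of the residual \(W(P^\Phi)\) in \eqref{eq:terminal-W} is decomposable. Then \Cref{thm:terminal-selector} forces \(W(P^\Phi)=0\), which is exactly hybrid observability in the sense of \Cref{def:hybrid-visible}.

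First recall that \(\one\in\Ksh\) always, so \(H_0=\Ksh/\langle\one\rangle\) has dimension \(r=\dim\Ksh-1\le1\). By \Cref{thm:hybrid-kernel}, \(\ker\Ghyb=(\Ksh\oplus\Ksh)\cap\ker J_{\rm tight}^{\rm tr}\), and this space contains \(\trans\). So hybrid observability is equivalent to \(W(P^\Phi)=0\).

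If \(r=0\), then \(\Ksh=\langle\one\rangle\) and \(\Ksh\oplus\Ksh=\trans\). The residual vanishes trivially.

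If \(r=1\), write \(H_0=\langle h\rangle\). Then \(H_0\otimes\R^2=\{h\otimes u:u\in\R^2\}\), so every element of this space is of the form \(h\otimes u\). In particular, every element of \(W(P^\Phi)\subset H_0\otimes\R^2\) is decomposable. There is one bookkeeping point to check here: a representative of a class in \(W\) has the form \(v=(\tilde h u_1+a\one,\ \tilde h u_2+b\one)\), where \(\tilde h\) lifts \(h\). Subtracting the translation \((a\one,b\one)\) leaves the concrete line motion \(v_i=\tilde h_i u\). This is exactly the form used in \Cref{lem:exact-line}, and the translation ambiguity is the one that lemma already absorbs. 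The rank-one-free property \eqref{eq:rank-one-free} then gives \(W(P^\Phi)=0\).

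I expect no real obstacle. The only care needed is identifying the quotient \(W\) with actual velocities compatible with \Cref{lem:exact-line}, so that "decomposable in \(H_0\otimes\R^2\)" matches the parallel-motion form \(v_i=h_iu\) used in the proof of \Cref{thm:terminal-selector}.
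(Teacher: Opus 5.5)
Your argument is correct and is essentially the paper's proof: since \(\one\in\Ksh\), the hypothesis gives \(\dim H_0\le1\). Hence every element of \(H_0\otimes\R^2\) is decomposable, and the rank-one-free property of \Cref{thm:terminal-selector} forces \(W=0\), i.e.\ \(\ker\Ghyb=\trans\). Your extra bookkeeping about lifting classes to parallel motions modulo translations is harmless and matches what \Cref{lem:exact-line} already absorbs.
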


\begin{proof}
At this representative \(\dim H_0\le1\).  Every nonzero element of
\(H_0\otimes\R^2\) is decomposable, so \Cref{thm:terminal-selector} gives
\(W=0\).
\end{proof}

\subsection{Rank-one-free matrix spaces}

The residual has a simple linear-algebraic normal form.

\begin{theorem}[Graph normal form and sharp dimension]
\label{thm:rank-one-free-dimension}
Let \(H\) be a real vector space of dimension \(r\), and let
\(W\subset H\otimes\R^2\) contain no nonzero decomposable tensor.  Then
both coordinate projections are injective, and for
\(U=\pi_1(W)\) there is a linear map \(T:U\to H\) such that
\begin{equation}
 W=\{(u,Tu):u\in U\},\qquad
 Tu\notin\R u\quad(u\ne0).            \label{eq:graph-T}
\end{equation}
Moreover,
\begin{equation}
 \dim W\le2\left\lfloor\frac r2\right\rfloor,       \label{eq:sharp-dim}
\end{equation}
and the bound is sharp for every \(r\).
\end{theorem}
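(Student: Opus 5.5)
Identify \(H\otimes\R^2\) with \(H\oplus H\), writing a tensor as a pair \((w_1,w_2)\) with \(w_1,w_2\in H\); the decomposable tensors are then exactly the pairs \((h u_1,h u_2)\), i.e.\ pairs whose two components are linearly dependent.

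\emph{Injectivity and graph form.} If \(\pi_2(w)=0\) with \(w\ne0\), then \(w=(w_1,0)=w_1\otimes e_1\) is a nonzero decomposable tensor, which is impossible. Hence \(\pi_2\) is injective, and \(\pi_1\) is injective by symmetry. Put \(U=\pi_1(W)\) and \(T=\pi_2\circ(\pi_1|_W)^{-1}:U\to H\). Then \(W=\{(u,Tu)\}\). If \(Tu=cu\) for some \(u\ne0\), then \((u,cu)=u\otimes(1,c)\) is decomposable, a contradiction. This proves \eqref{eq:graph-T}.

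\emph{Dimension bound.} The same argument applies to every direction in \(\R^2\). For \((\alpha,\beta)\ne0\), the map \(w\mapsto\alpha w_1+\beta w_2\) is injective on \(W\): if \(\alpha w_1+\beta w_2=0\), then \(w_1,w_2\) are dependent, so \(w\) is decomposable, hence \(w=0\). Choose a basis of \(W\) and let \(M_1,M_2\) be the \(r\times m\) matrices of \(\pi_1,\pi_2\), where \(m=\dim W\). Then every nonzero real combination \(\alpha M_1+\beta M_2\) has full column rank \(m\).

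If \(m=r\), the pencil consists of square matrices and \(\det(\alpha M_1+\beta M_2)\) is a binary form of degree \(r\) with no real zero. A real binary form of odd degree always has a real zero, so \(r\) must be even, and \(m\le r=2\lfloor r/2\rfloor\).

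If \(m<r\), I would need \(m\le r-1\), which already equals \(2\lfloor r/2\rfloor\) when \(r\) is odd, and \(m\le r\) always holds because \(\pi_1\) is injective. The even case with \(m<r\) is therefore trivial as well. So the only real content is that \(m=r\) forces \(r\) even, and the parity of the determinant handles it. Equivalently, in graph language \(U=H\), \(T\) is an endomorphism of \(H\) with no real eigenvector, and an odd-dimensional real space always carries a real eigenvector.

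\emph{Sharpness.} For \(r=2k\), write \(H=\R^2\otimes\R^k\), let \(J\) be the quarter-turn, and take \(U=H\) and \(T=J\otimes I_k\). Then \(T\) has no real eigenvector, so \(W=\{(u,Tu)\}\) has dimension \(2k\) and contains no decomposable tensor. For \(r=2k+1\), embed the same example in a \(2k\)-dimensional subspace of \(H\).

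The main obstacle is only the dimension step. The worry is a \(W\) of dimension \(r\) when \(r\) is odd, and the odd-degree determinant argument (equivalently, the existence of a real eigenvector) removes it cleanly.
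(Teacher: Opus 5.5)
Your proof is correct and follows essentially the same route as the paper. Injectivity of both projections comes from $(u,0)$ and $(0,u)$ being decomposable, and this gives the graph map $T$ with $Tu\notin\R u$. The case $\dim W=r$ is excluded for odd $r$ because $T$ would then be an endomorphism of an odd-dimensional space with a real eigenvalue; your odd-degree binary form $\det(\alpha M_1+\beta M_2)=\det(\alpha I+\beta T)\det M_1$ is the same fact in homogeneous form. Sharpness comes from a complex structure on an even-dimensional subspace.
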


\begin{proof}
If \((u,0)\) or \((0,u)\) were a nonzero element of \(W\), it would be
decomposable.  Hence the coordinate projections are injective and give
\eqref{eq:graph-T}.  In particular, \(\dim W\le r\).  If equality holds then
\(U=H\), and \(T\) has no real eigenvector.  A real endomorphism of
odd-dimensional \(H\) has a real eigenvalue, so equality is impossible for
odd \(r\); this yields \eqref{eq:sharp-dim}.  For even \(r\), take
\(W=\{(u,\mathcal Ju):u\in H\}\) for a complex structure \(\mathcal J\).  For odd \(r\), use
the same construction on an even-dimensional hyperplane of \(H\).
\end{proof}

The elementary nature of \Cref{thm:rank-one-free-dimension} is important:
the theorem classifies the size of the obstruction but does not eliminate it.
When \(m=\dim W=r\), \(r\) is even and \(T:H\to H\) has no real invariant
line.  The next result explains what this maximal case forces geometrically.

\subsection{Local affine transfers}
\label{subsec:shadow}

Choose \(v=(\xi,\eta)\in W\) and write
\[
 Q_i=(\xi_i,\eta_i).
\]
The points \(Q_i\) form the \emph{shared shadow} of the velocity.  For a tight
triangle \(t=(i,j,k)\), set
\[
 E_t^P=[\,p_j-p_i\ \ p_k-p_i\,],\qquad
 E_t^Q=[\,Q_j-Q_i\ \ Q_k-Q_i\,],
\]
and define its affine transfer
\begin{equation}
 C_t=E_t^Q(E_t^P)^{-1}.               \label{eq:local-transfer}
\end{equation}
The physical edge matrix is invertible because \(\Delta>0\).  Since
\(E_t^Q=C_tE_t^P\),
\[
 D_t(P+sQ)=D_t(P)\det(I+sC_t).
\]
The two-dimensional identity
\[
 \det(I+sC)=1+s\operatorname{tr}C+s^2\det C
\]
then gives the following formulas.

\begin{proposition}[Trace and shadow determinant]
\label{prop:shadow-transfer}
For every tight triangle and every invisible shadow,
\begin{equation}
 \operatorname{tr}C_t=0,\qquad
 q_t(v):=D^2A_t(P)[v,v]
 =\xi^TB_t\eta
 =2\Delta\det C_t.                    \label{eq:shadow-q}
\end{equation}
\end{proposition}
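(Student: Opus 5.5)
The plan is to expand the signed double area \(D_t(P+sQ)\) in \(s\) in two ways and compare coefficients. Here \(v=(\xi,\eta)\) is an element of \((\Ksh\oplus\Ksh)\cap\ker J_{\rm tight}^{\rm tr}\), with \(Q_i=(\xi_i,\eta_i)\) its shadow.

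\textbf{First expansion.} By \eqref{eq:stress-area}, in the orientation cell \(s_tD_t(P)=x^TB_ty\). Replacing \((x,y)\) by \((x+s\xi,\,y+s\eta)\) and using bilinearity gives
\[
 s_tD_t(P+sQ)=x^TB_ty+s\,(\xi^TB_ty+x^TB_t\eta)+s^2\,\xi^TB_t\eta .
\]
Since \(A_t=\tfrac12 s_tD_t\), the Hessian satisfies \(D^2A_t(P)[v,v]=\xi^TB_t\eta\). This is the middle equality in \eqref{eq:shadow-q}, with the normalization \(q_t\) defined as the \(s^2\) coefficient, as in Table~\ref{tab:notation}. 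I would check that convention against the paper's statement, since a literal second derivative carries a factor \(2\).

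\textbf{Second expansion.} The transfer factorization gives \(D_t(P+sQ)=D_t(P)\det(I+sC_t)=D_t(P)\bigl(1+s\operatorname{tr}C_t+s^2\det C_t\bigr)\). Multiplying by \(s_t\) and using \(s_tD_t(P)=2A_t(P)=2\Delta\), which holds because \(t\) is tight, the linear coefficient is \(2\Delta\operatorname{tr}C_t\) and the quadratic coefficient is \(2\Delta\det C_t\).

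\textbf{Vanishing of the trace.} The linear coefficient is \(2\,DA_t(P)[v]\), a tight triangle row of the Jacobian applied to \(v\). Since \(v\in\ker J_{\rm tight}^{\rm tr}\), it vanishes, and \(\Delta>0\) then forces \(\operatorname{tr}C_t=0\).

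\textbf{Quadratic identity.} Comparing \(s^2\) coefficients of the two polynomial expansions gives \(\xi^TB_t\eta=2\Delta\det C_t\).

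\textbf{Main obstacle.} The only delicate point is bookkeeping. Triangle rows of \(J_{\rm tight}^{\rm tr}\) are the genuine area derivatives, and they are translation invariant, so quotienting by \(\trans\) does not affect them. Membership in \(\Ksh\oplus\Ksh\) is not needed for this proposition. The remaining care is the factor-of-two convention in \(q_t\) noted above.
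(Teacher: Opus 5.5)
Your argument is correct and is essentially the paper's. You expand \(D_t(P+sQ)=D_t(P)\det(I+sC_t)\), kill the linear coefficient by first-order tightness of the row, and identify the quadratic coefficient \(s_t\det(Q_j-Q_i,Q_k-Q_i)=\xi^TB_t\eta\) with \(s_tD_t(P)\det C_t=2\Delta\det C_t\). Your factor-of-two worry is unfounded: the \(2\) from differentiating \(s^2\) twice cancels the \(\tfrac12\) in \(A_t=\tfrac12 s_tD_t\). So \(D^2A_t(P)[v,v]=\xi^TB_t\eta\) holds literally as a Hessian value, exactly as you computed; it is the \(s^2\) coefficient of \(2A_t\), not of \(A_t\).
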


\begin{proof}
First-order tightness gives the trace equation.  Direct expansion of
\eqref{eq:elementary-stress} gives
\[
 \xi^TB_t\eta
 =s_t\det(Q_j-Q_i,Q_k-Q_i).
\]
The latter determinant equals
\(\det C_t\,D_t(P)\), and \(s_tD_t(P)=2\Delta\).
\end{proof}

\begin{corollary}[Shadow equilibrium and load orthogonality]
\label{cor:shadow-load}
Let \(Q=[\,\xi\ \eta\,]\) be an invisible shadow.  For every actual KKT
stress \(B\) and its load columns \(b_x,b_y\),
\begin{equation}
 BQ=0,
 \qquad
 Q^T[\,b_x\ b_y\,]=0.                \label{eq:shadow-load}
\end{equation}
\end{corollary}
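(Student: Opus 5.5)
The plan is to read both identities off the definition of the invisible residual and the equilibrium \eqref{eq:real-equilibrium}; no new geometry is needed.

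\emph{Step 1: $BQ=0$.} An invisible shadow $v=(\xi,\eta)$ lies, by \eqref{eq:terminal-W} and \Cref{thm:hybrid-kernel}, in $(\Ksh\oplus\Ksh)\cap\ker J_{\rm tight}^{\rm tr}$, possibly after adding a translation. The definition \eqref{eq:shared-kernel} says that every vector of $\Ksh$ is killed by $B(q)$ for every $q\in\KKT(P)$. Hence $B\xi=0$ and $B\eta=0$ for every actual KKT stress $B$. A translation representative changes $\xi,\eta$ only by multiples of $\one$, and $B\one=0$ by \eqref{eq:Bt-basic}. So $BQ=0$ holds for any representative of the class in $W$.

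\emph{Step 2: $Q^T[\,b_x\ b_y\,]=0$.} Use \eqref{eq:real-equilibrium} to write $b_x=\tfrac12By$ and $b_y=-\tfrac12Bx$. For $\zeta\in\{\xi,\eta\}$, skew symmetry gives
\[
\zeta^Tb_x=\tfrac12\zeta^TBy=-\tfrac12(B\zeta)^Ty=0,
\]
and similarly $\zeta^Tb_y=\tfrac12(B\zeta)^Tx=0$. These four scalar identities are exactly the entries of the $2\times2$ matrix $Q^T[\,b_x\ b_y\,]$.

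\emph{Translation gauge.} Replacing $\xi$ by $\xi+c\one$ adds $c\,\one^Tb_x$ to the first entry. This vanishes by the force balance in \Cref{thm:stress-noether}, so the second identity is also independent of the representative.

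\emph{Main obstacle.} There is essentially none. The only point needing care is that $B$ ranges over \emph{all} actual KKT stresses, not just the one stress used to define the shadow. This is precisely why the shared kernel over the whole face is used, rather than the kernel of a single $B$.
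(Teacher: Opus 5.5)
Your proof is correct and follows the paper's argument: both columns of \(Q\) lie in \(\Ksh\), which gives \(BQ=0\), and the load formulas \(b_x=\tfrac12By\), \(b_y=-\tfrac12Bx\) together with skew symmetry give the orthogonality one column at a time. Your translation-gauge remark is a harmless extra check; it is already automatic, since \(B\one=0\) keeps shifted columns inside \(\Ksh\).
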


\begin{proof}
Both columns of \(Q\) lie in \(\Ksh\), which gives the first identity.  The
load formulas \(b_x=BY/2\) and \(b_y=-BX/2\), followed by skew symmetry, give
the second one column at a time.
\end{proof}

Thus \(C_t\in\mathfrak{sl}_2(\R)\).  Its determinant distinguishes the
elliptic, hyperbolic, and parabolic local transfer types.  The term
\emph{shadow curvature} below refers only to the second derivative \(q_t\),
not to a differential-geometric curvature.

\begin{lemma}[Transfer jump across a shared edge]
\label{lem:transfer-jump}
Suppose tight triangles \(t,u\) share the physical edge vector \(d\).  For a
fixed invisible shadow, their transfers satisfy
\begin{equation}
 C_u-C_t=a\nu^T,\qquad
 \nu^Td=0,\qquad \nu^Ta=0,\qquad (C_u-C_t)^2=0.  \label{eq:transfer-jump}
\end{equation}
for suitable \(a,\nu\in\R^2\).
\end{lemma}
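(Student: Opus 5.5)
Both transfers are affine maps that agree on the shared edge, so their difference kills the edge vector; the trace condition from \Cref{prop:shadow-transfer} then forces the difference to be nilpotent. Concretely, let the shared edge join vertices \(i\) and \(j\), so \(d=p_j-p_i\) up to sign, and put \(d^Q=Q_j-Q_i\).

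First I will show that each transfer sends the shared edge to its shadow edge: \(C_td=d^Q\) and \(C_ud=d^Q\). Since \(C_t\) is defined by \(E_t^Q=C_tE_t^P\), it maps every physical edge vector of \(t\) to the corresponding shadow edge vector. This holds for \(p_j-p_i\) if \(i\) is the base vertex of the edge matrix, and by subtraction of columns otherwise. The same applies to \(C_u\). Hence \((C_u-C_t)d=0\). Because \(d\ne0\), the difference \(N=C_u-C_t\) has rank at most one and kills \(d\). I therefore write \(N=a\nu^T\) with \(\nu\) spanning the annihilator of \(d\), so \(\nu^Td=0\). If \(N=0\), take \(a=0\).

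Next I use \Cref{prop:shadow-transfer}: both \(C_t\) and \(C_u\) are traceless, so \(\operatorname{tr}N=0\). For a rank-one matrix this reads \(\nu^Ta=\operatorname{tr}(a\nu^T)=0\). Finally,
\[
(a\nu^T)^2=a(\nu^Ta)\nu^T=0,
\]
which gives all four relations in \eqref{eq:transfer-jump}.

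The only point needing care is the claim that each \(C_t\) acts as the shadow map on every edge of its triangle, independent of which vertex is chosen as base. This follows from linearity, since the third edge vector is the difference of the two columns. The rest is immediate, so I expect no serious obstacle. I will also note the geometric reading: \(a\) is parallel to \(d\), because \(a\perp\nu\) and \(\nu\perp d\) in the plane. The jump is therefore a shear along the shared edge.
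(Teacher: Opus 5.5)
Your proposal is correct and follows essentially the same route as the paper's proof: the difference of the transfers kills the shared edge vector \(d\), so it is a rank-at-most-one matrix \(a\nu^T\) with \(\nu^Td=0\), and the trace-zero condition of \Cref{prop:shadow-transfer} gives \(\nu^Ta=0\) and hence nilpotency. Your extra remarks, that each \(C_t\) maps every edge of its triangle regardless of the base vertex and that \(a\) is parallel to \(d\), are correct and only add detail.
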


\begin{proof}
Both transfers send \(d\) to the same shadow edge, so their difference kills
\(d\) and has rank at most one.  Write it as \(a\nu^T\) with
\(\nu^Td=0\).  Both transfers have trace zero, hence
\(0=\operatorname{tr}(a\nu^T)=\nu^Ta\); the square is therefore zero.
\end{proof}

\begin{lemma}[Edge-overlap collinearity]
\label{lem:edge-collinearity}
Suppose two tight triangles share a physical edge and satisfy
\(\operatorname{tr}C_t=q_t=0\) for the same shadow.  Then the four shadow
vertices of their union are collinear.
\end{lemma}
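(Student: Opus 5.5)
Since $\operatorname{tr}C_t=0$ and $q_t=2\Delta\det C_t=0$ by \Cref{prop:shadow-transfer}, each transfer $C_t$ is a traceless $2\times2$ matrix with zero determinant. Cayley--Hamilton therefore gives $C_t^2=0$, and the same holds for $C_u$. A nilpotent $2\times2$ matrix has rank at most one, so its image lies in a single line. That line is $\R a_t$ when $C_t=a_t\mu_t^T$ with $\mu_t^Ta_t=0$, or an arbitrary line when $C_t=0$.

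The plan is to show that the shadow of each triangle is collinear. This is immediate from the image of $C_t$: the shadow edges are $E_t^Q=C_tE_t^P$, so $Q_j-Q_i$ and $Q_k-Q_i$ lie in $\im C_t\subseteq\R a_t$. Hence the three shadow vertices of $t$ lie on a line $Q_i+\R a_t$, and likewise the shadow of $u$ lies on a line with direction $a_u$.

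Next I would glue the two lines along the shared edge. Write the common physical edge as $p_k-p_j=d$ between the shared vertices $j,k$. Both transfers send $d$ to the same shadow edge $Q_k-Q_j$, so $C_td=C_ud$.

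\emph{Case 1: $C_td\ne0$.} Then $Q_k\ne Q_j$. The two lines both contain the distinct points $Q_j$ and $Q_k$, so they coincide, and all four shadow vertices are collinear.

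\emph{Case 2: $C_td=0$.} Then $Q_j=Q_k$, and both lines pass through this common point. Collinearity is now not automatic, so I would use \Cref{lem:transfer-jump}, which gives $C_u=C_t+a\nu^T$ with $\nu^Td=0$. Since $C_t$ has rank at most one and kills $d$, we can write $C_t=a_t\nu^T$ with the same covector $\nu$ (up to scale), and similarly $C_u=a_u\nu^T$. Nilpotency gives $\nu^Ta_t=0=\nu^Ta_u$. Because $\nu\ne0$ is a covector on $\R^2$, its kernel is one-dimensional, so $a_t$ and $a_u$ are parallel (or one of them vanishes). The two lines therefore have the same direction and share the point $Q_j$, so they coincide. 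If both transfers vanish, all four shadow points are equal, which is trivially collinear.

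The main obstacle is Case 2, the degenerate situation where the shared shadow edge collapses. There the lines have only one common point, and the direction argument through the common annihilator $\nu$ of $d$ is what closes the proof. Everything else is a short consequence of the nilpotency of traceless, determinant-zero $2\times2$ matrices.
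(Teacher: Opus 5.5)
Your proof is correct and follows essentially the paper's route: you split on whether the shared shadow edge $C_td=C_ud$ vanishes, and when it is nonzero both rank-one images are the line through that edge, while in the collapsed case nilpotency (image $=$ kernel $=\R d$, which you express as $\nu^Ta_t=\nu^Ta_u=0$) makes both images parallel to the physical edge $d$. One small tidy-up: you do not need the covector from \Cref{lem:transfer-jump}, which may be zero when $C_u=C_t$; instead fix any nonzero $\nu$ with $\nu^Td=0$ (possible since $d\neq0$) and factor $C_t=a_t\nu^T$, $C_u=a_u\nu^T$ directly from $C_td=C_ud=0$.
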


\begin{proof}
Each transfer is trace-free and singular.  If the shared shadow edge is
nonzero, the image of each rank-one transfer is the line spanned by that
edge, so both remaining shadow edges lie on the same line.  If the shared
shadow edge is zero, the physical shared-edge direction lies in both kernels.
A nonzero trace-free rank-one endomorphism is nilpotent and has image equal to
its kernel; both images are therefore parallel to the shared physical edge.
The zero-transfer cases are immediate.
\end{proof}

Consequently, a chain of zero-\(q\) triangles whose shared shadow edges are
nonzero stays on one shadow line.  A collapsed shared shadow edge can switch
lines, so no unqualified edge-connected propagation statement is available.
Blocks meeting only at a vertex may also change line, which is why separator
analysis is necessary.

\subsection{A low-dimensional exclusion}

We record the linear-algebra lemma behind the literal maximal
two-dimensional case.

\begin{lemma}[Dual determinant definiteness]
\label{lem:dual-determinant}
Let \(\mathcal D\subset M_2(\R)\) be two-dimensional and suppose every nonzero
matrix in \(\mathcal D\) is invertible.  Its trace annihilator
\[
 \mathcal D^{\perp_{\rm tr}}
 =\{C:\operatorname{tr}(MC)=0\text{ for every }M\in\mathcal D\}
\]
is two-dimensional, and determinant has a fixed nonzero sign on
\(\mathcal D^{\perp_{\rm tr}}\setminus\{0\}\).
\end{lemma}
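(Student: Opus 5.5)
The plan is to work on \(M_2(\R)\), which is four-dimensional, with the symmetric bilinear form \(\langle M,C\rangle=\operatorname{tr}(MC)\) and the determinant viewed as a quadratic form. First, \(\langle\cdot,\cdot\rangle\) is nondegenerate: if \(\operatorname{tr}(MC)=0\) for all \(M\), then taking \(M=E_{ij}\) forces every entry of \(C\) to vanish. Hence the annihilator of the two-dimensional space \(\mathcal D\) has dimension \(4-2=2\), which is the first claim.

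For the sign claim I will use the adjugate. Let \(\operatorname{adj}\) be the linear involution with \(\operatorname{adj}(C)=\operatorname{tr}(C)I-C\). It satisfies \(\det C=\tfrac12\operatorname{tr}(C\operatorname{adj}C)\) and \(\operatorname{tr}(M\operatorname{adj}C)=\operatorname{tr}M\operatorname{tr}C-\operatorname{tr}(MC)\). Polarizing, the determinant's bilinear form is \(\beta(M,C)=\tfrac12\operatorname{tr}(M\operatorname{adj}C)\), of signature \((2,2)\) (its diagonal form is \(ad-bc\)). Then \(C\in\mathcal D^{\perp_{\rm tr}}\) if and only if \(\operatorname{adj}C\) is \(\beta\)-orthogonal to \(\mathcal D\). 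Since \(\operatorname{adj}\) is a \(\det\)-preserving linear isomorphism, it suffices to show that the determinant has a fixed nonzero sign on \(\mathcal D^{\perp_\beta}\setminus\{0\}\).

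By hypothesis \(\det\) is anisotropic on \(\mathcal D\). By continuity and connectedness of \(\mathcal D\setminus\{0\}\) (dimension \(2\)), it is therefore definite on \(\mathcal D\), say positive definite; the negative case is symmetric. In particular \(\beta|_{\mathcal D}\) is nondegenerate, so \(M_2(\R)=\mathcal D\oplus\mathcal D^{\perp_\beta}\) is a \(\beta\)-orthogonal splitting. By Sylvester's law of inertia, signature \((2,2)\) with \(\mathcal D\) contributing \((2,0)\) forces \(\det\) to be negative definite on \(\mathcal D^{\perp_\beta}\). Transporting back through \(\operatorname{adj}\), the determinant is negative definite on \(\mathcal D^{\perp_{\rm tr}}\). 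In particular it has a fixed nonzero sign there, and that sign is opposite to the sign on \(\mathcal D\).

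The main point to check carefully is the identification of the trace annihilator with the \(\beta\)-orthogonal complement via the adjugate, including the factor of \(\tfrac12\) and signs. This can be verified directly in coordinates: for \(M=\begin{pmatrix}a&b\\c&d\end{pmatrix}\), \(\operatorname{adj}M=\begin{pmatrix}d&-b\\-c&a\end{pmatrix}\), and \(\operatorname{tr}(M\operatorname{adj}M)=2\det M\). Everything else is standard inertia.
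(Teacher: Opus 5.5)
Your argument is correct, and it takes a different route from the paper. The paper argues by normal form. It right-multiplies \(\mathcal D\) by the inverse of a nonzero element to get \(\Span\{I,A\}\), removes the scalar part of \(A\), and conjugates the resulting trace-free matrix with no real eigenvalue to a multiple of \(\Omega\). It then computes the annihilator of \(\Span\{I,\Omega\}\) explicitly as the matrices \(\left(\begin{smallmatrix}x&y\\y&-x\end{smallmatrix}\right)\), with determinant \(-x^2-y^2\). Finally it undoes the reductions; these act on the annihilator by a left multiplication and a conjugation, so they rescale \(\det\) by one fixed nonzero factor.

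You instead stay coordinate-free. Nondegeneracy of the trace pairing gives the dimension count at once. The adjugate involution carries \(\mathcal D^{\perp_{\rm tr}}\) \(\det\)-isometrically onto the \(\beta\)-orthogonal complement of \(\mathcal D\), where \(\beta\) is the polar form of \(\det\). The lemma then becomes the inertia fact that, in a split space of signature \((2,2)\), the orthogonal complement of a definite plane is definite of the opposite sign. Your connectedness argument on \(\mathcal D\setminus\{0\}\) plays the role of the paper's no-real-eigenvalue step.

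Your version avoids the eigenvalue normal form and the bookkeeping of how the annihilator transforms. It also proves slightly more: the sign of \(\det\) on \(\mathcal D^{\perp_{\rm tr}}\) is always opposite to its sign on \(\mathcal D\), as the normal form \(\det(aI+b\Omega)=a^2+b^2\) versus \(-x^2-y^2\) illustrates. The paper's route is more hands-on and exhibits the annihilator explicitly in coordinates.
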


\begin{proof}
Right-multiply by the inverse of one element of \(\mathcal D\), so that the
plane becomes \(\Span\{I,A\}\).  Removing the scalar part of \(A\) gives a
trace-free matrix with no real eigenvalue; after real conjugation it is a
nonzero multiple of
\(
\Omega=\left(\begin{smallmatrix}0&-1\\1&0\end{smallmatrix}\right).
\)
The trace annihilator of \(\Span\{I,\Omega\}\) consists of
\[
 \begin{pmatrix}x&y\\y&-x\end{pmatrix},
\]
whose determinant is \(-x^2-y^2\).  Undoing the transformations can reverse
the sign but cannot destroy definiteness.
\end{proof}

Call a vertex \emph{stress-silent} if it belongs to no triangle in the maximal
positive KKT support.  Such a vertex has a zero row and column in every actual
stress and therefore zero KKT reaction.

\begin{theorem}[Exclusion of the literal maximal two-dimensional residual]
\label{thm:division-residual}
At the fully pinned representative of \Cref{thm:terminal-selector}, the
simultaneous conditions
\[
 \dim H_0=2,\qquad \dim W=2
\]
are impossible.
\end{theorem}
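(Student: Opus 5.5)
The plan is to convert the hypotheses into a rigid linear-algebra statement using \Cref{thm:rank-one-free-dimension} and \Cref{lem:dual-determinant}, and then to contradict the choice of \(P^\Phi\) in the same way as \Cref{thm:terminal-selector}. Fix a basis \(h^1,h^2\) of \(H_0\) and assign to each vertex the point \(\varphi_i=(h^1_i,h^2_i)\in\R^2\), which is well defined up to a common translation. By \Cref{thm:terminal-selector}, \(W\) is rank-one-free. Since \(\dim W=r=2\), \Cref{thm:rank-one-free-dimension} gives \(W=\{(u,Tu):u\in H_0\}\), where \(T\) has no real eigenvector. Each \(v=(\xi,\eta)\in W\) then has shadow \(Q_i=M(v)\varphi_i\), where the rows of \(M(v)\) are the coordinates of \(u\) and of \(Tu\). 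The set \(\mathcal D=\{M(v):v\in W\}\) is a two-dimensional subspace of \(M_2(\R)\). Every nonzero element of \(\mathcal D\) is invertible, because \(Tu\notin\R u\). It follows that \(\det M(v)\) has a fixed sign on \(W\setminus\{0\}\).

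Next, for each tight triangle \(t\) put \(G_t=E_t^\varphi(E_t^P)^{-1}\), so that \(C_t=M(v)G_t\) for every \(v\in W\). \Cref{prop:shadow-transfer} gives \(\operatorname{tr}(M G_t)=0\) for all \(M\in\mathcal D\), so \(G_t\in\mathcal D^{\perp_{\rm tr}}\). By \Cref{lem:dual-determinant}, either \(G_t=0\) or \(\det G_t\) has a sign \(\sigma\) that does not depend on \(t\). Hence
\[
 q_t(v)=2\Delta\det M(v)\det G_t
\]
has a single sign, common to all tight triangles and all \(v\ne0\), and it vanishes exactly when \(G_t=0\).

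Now take a relative-interior multiplier \(\lambda\). Because \(\xi\in\Ksh\), we have \(0=\xi^TB(\lambda)\eta=\sum_t\lambda_tq_t(v)\). Every term has the same sign, so \(G_t=0\) on every supportable triangle, that is, \(C_t=0\) there. Along the straight segment \(P+sQ\), each supportable triangle's determinant is therefore exactly constant, because \(\det(I+sC_t)=1\). Boundary rows are handled as in \Cref{lem:exact-line}, after subtracting a translation. The unsupported tight triangles have an exactly quadratic area \(\Delta+\tfrac12 s^2q_t\), with all \(q_t\) of one sign.

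If that common sign is nonnegative, the segment stays in \(\mathcal M_n\) for small \(|s|\). The parallelogram identity for \(\Phi\) then contradicts maximality, exactly as in \Cref{thm:terminal-selector}. This case also covers the situation where there are no unsupported tight triangles.

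The main obstacle is the case in which the common sign is negative. Here the unsupported tight rows drop at second order, and I would compensate with the exposing vector \(y\) of \Cref{prop:maximal-support}. Read as a velocity together with a level rate, \(y\) satisfies three conditions: every supportable row has zero derivative, every unsupported tight row has strictly positive derivative, and \(e^Ty=0\). I would test the curve \(P+sQ+Ks^2w\), with \(w\) the velocity part of \(y\) and \(K\) large. The unsupported rows then gain \(Ks^2\cdot(\text{positive})\), which dominates their loss. The supportable rows are unchanged at order \(s^2\), since \(C_t=0\) and the derivative of \(w\) on them vanishes, and their deviations are \(O(s^3)\). One must then check that the level-rate component of \(y\) is zero, or else absorb it, and deal with the \(O(s^3)\) errors on supportable rows.

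Failing that, I would try to show that the negative sign itself contradicts second-order necessity at a local maximum. The argument would pair \(v\) with the Lagrangian Hessian \(\sum_t\lambda_tq_t(v)=0\) on the critical cone, with the unsupported rows entering through \(y\). If this works, the conclusion is again either a feasible curve along which \(\Phi\) increases or a violation of optimality.
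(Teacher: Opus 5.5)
\paragraph{Where the proposal is incomplete.} Your reduction through \(G_t=0\) on the maximal positive support is exactly the paper's; its base transfer \(C_t^R\) is your \(G_t\). Your case \(\operatorname{sgn}(\det M)\,\sigma\ge0\) is also a correct shortcut: the segment \(P\pm sQ\) stays in \(\mathcal M_n\), and the parallelogram identity for \(\Phi\) gives the contradiction.

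The genuine gap is the negative case. Nothing available at that point of your argument rules it out, and neither of your proposed routes closes it.
\begin{itemize}
\item \emph{The exposing-vector curve.} The level rate is not the problem: \(e^Ty=0\) says precisely that it vanishes. The supportable rows are the problem. Since \(C_t=0\) there, the shadow \(Q\) is constant on each supportable triangle, so every term involving \(Q\) drops out and \(A_t(P+sQ+Ks^2w)=\Delta+\tfrac12K^2s^4D^2A_t[w,w]\). The vector \(y\) is a first-order coefficient certificate and gives no control of the sign of \(D^2A_t[w,w]\). Moreover, the supportable triangle rows and the supported boundary rows are positively dependent; this is position stationarity. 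Hence any higher-order correction would need \(\sum_t\lambda_tD^2A_t[w,w]\ge0\), and that is not available.
\item \emph{The second-order fallback.} It cannot work either. For every \(\lambda\) in the face, \(\sum_t\lambda_tq_t(v)=\xi^TB(\lambda)\eta=0\), so the second-order necessary condition holds with equality along \(v\). Indeed, a maximizer of \(\delta\) is perfectly compatible with unsupported rows decreasing along \(\pm v\). Optimality of \(\delta\) alone yields nothing; the contradiction must come from the terminal selection.
\end{itemize}

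\paragraph{The missing idea.} In the bad case, the \(\Phi\)-increasing motion is not a correction of the residual segment. It is a motion of a single silent vertex, and full pinning encodes exactly this. The paper never splits by sign; it argues as follows.
\begin{enumerate}
\item From \(G_t=0\) on \(\mathcal T_+\), both \(h,g\) are constant on every positive component. So \(\Ksh\) is the space of atom functions, and \(c+s=3\).
\item Because \(\dim W=\dim H_0\), both projections of \(W\) are onto \(H_0\). The boundary-difference rows then force \(\delta_V=\delta_H=0\), so all vertical contacts lie in one atom and all horizontal contacts lie in one atom.
\item Together with \Cref{thm:components}, this excludes \(c=2\). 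What remains is one component \(C\) and two interior silent labels \(a,b\).
\item A tight row of type \(aCC\) would have a nonzero rank-one base transfer in \(\mathcal D^{\perp_{\rm tr}}\), which \Cref{lem:dual-determinant} forbids. \Cref{lem:unique-apex} allows at most one tight \(abC\) row.
\item Hence the one-point program at the interior vertex \(a\) has active normals of rank at most two. This contradicts the vertex condition of full pinning, which itself comes from \(\Phi\)-maximization applied to one vertex rather than along \(v\).
\end{enumerate}
To complete your proof you need this atom, boundary and unique-apex chain, or an equivalent construction of a feasible \(\Phi\)-increasing motion in the negative case.
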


\begin{proof}
Choose a basis \(h,g\) of \(H_0\), choose representatives modulo constants,
and put
\[
 R_i=(h_i,g_i)^T.
\]
Writing both scalar coordinates of a residual velocity in this basis
identifies \(W\) with a two-dimensional matrix plane
\(\mathcal D\subset M_2(\R)\): every residual class has a representative
\begin{equation}
 v_i=MR_i,\qquad M\in\mathcal D.       \label{eq:matrix-plane-velocity}
\end{equation}
The rank-one-free property says precisely that every nonzero
\(M\in\mathcal D\) is invertible.

For a tight triangle \(t=(i,j,k)\), define the \emph{base transfer}
\begin{equation}
 C_t^R=[\,R_j-R_i\ \ R_k-R_i\,](E_t^P)^{-1}.       \label{eq:base-transfer}
\end{equation}
The transfer of the velocity \eqref{eq:matrix-plane-velocity} is
\(MC_t^R\).  Since every element of \(W\) is first-order invisible,
\[
 \operatorname{tr}(MC_t^R)=0\qquad(M\in\mathcal D),
\]
so \(C_t^R\in\mathcal D^{\perp_{\rm tr}}\).  Its second coefficient for a
fixed nonzero \(M\) is
\begin{equation}
 q_t(M)=2\Delta\det(MC_t^R)
       =2\Delta\det(M)\det(C_t^R).     \label{eq:division-curvature}
\end{equation}

Let \(\mathcal T_+\) be the maximal positive triangle support, and choose a
relative-interior actual multiplier, so
\(\lambda_t>0\) exactly on \(\mathcal T_+\).  Both scalar coordinates of
\eqref{eq:matrix-plane-velocity} lie in \(\Ksh\), whence
\[
 \sum_{t\in\mathcal T_+}\lambda_tq_t(M)=0.
\]
For fixed \(M\ne0\), \(\det M\ne0\); by
\Cref{lem:dual-determinant}, the nonzero values
\(\det C_t^R\) all have one sign.  Positivity of the weights therefore forces
\[
 C_t^R=0\qquad(t\in\mathcal T_+).     \label{eq:positive-support-collapse}
\]
Thus both \(h\) and \(g\) are constant on every positive component.

Let \(C_1,\ldots,C_c\) be those components, let \(Z\) be the set of
stress-silent labels, and write \(s=|Z|\).  The scalar functions constant on
each \(C_a\) and arbitrary on \(Z\) form a space \(\mathcal S\) of dimension
\(c+s\).  Every actual stress kills \(\mathcal S\), so
\(\mathcal S\subset\Ksh\).  Conversely,
\eqref{eq:positive-support-collapse} and
\(\Ksh=\Span\{\one,h,g\}\) give \(\Ksh\subset\mathcal S\).  Hence
\begin{equation}
 c+s=\dim\Ksh=3.                     \label{eq:atom-count}
\end{equation}
By \Cref{thm:components}, the possibilities are
\((c,s)=(2,1)\) and \((1,2)\).

Let \(V_\partial\) and \(H_\partial\) be the labels on vertical and horizontal
square sides, respectively, and define the contact-difference maps
\[
 \delta_V:H_0\to\R^{V_\partial}/\langle\one\rangle,
 \qquad
 \delta_H:H_0\to\R^{H_\partial}/\langle\one\rangle.
\]
The boundary part of the terminal residual gives
\[
 W\subset\ker\delta_V\oplus\ker\delta_H.
\]
Because \(\dim W=\dim H_0=2\) and both coordinate projections of \(W\) are
injective, they are surjective.  Therefore
\begin{equation}
 \delta_V=0,\qquad\delta_H=0
 \quad\hbox{on }H_0.                  \label{eq:full-boundary-collapse}
\end{equation}
Since \(\Ksh=\mathcal S\) contains every atom-separating scalar function,
all vertical contacts lie in one atom and all horizontal contacts lie in one
atom.  Each positive component has both kinds of contact by
\Cref{thm:components}; hence \(c=2\) is impossible.  We are left with one
positive component \(C\) and two silent labels \(a,b\).  Equation
\eqref{eq:full-boundary-collapse}, together with the contacts of \(C\), also
shows that \(a\) and \(b\) are interior points.

Subtract translations so that every residual velocity vanishes on \(C\).
Evaluation at \(a\) is injective: a nonzero vector in its kernel would be
supported only at \(b\) and would be decomposable.  Since both spaces have
dimension two, evaluation at \(a\), and similarly at \(b\), is an
isomorphism.  Thus every residual velocity has the form
\begin{equation}
 v_a=u,\qquad v_b=Tu,\qquad
 v_j=0\quad(j\in C),                 \label{eq:two-silent-normal-form}
\end{equation}
for some \(T\in GL_2(\R)\).  The map \(T\) has no real eigenvector, since an
eigenvector would make \eqref{eq:two-silent-normal-form} decomposable.  In
particular, \(T-I\) is invertible.

A tight triangle of atom type \(aCC\) or \(bCC\) would have a nonzero
rank-one base transfer in \(\mathcal D^{\perp_{\rm tr}}\), contradicting
\Cref{lem:dual-determinant}.  Thus every tight triangle incident with \(a\)
has type \(abC\).  By \Cref{lem:unique-apex}, at most one such row is tight.
In the one-point linear program at the interior label \(a\), all nonincident
tight rows have normal \((0,0,-1)\) in the variables \((p_a,\tau)\), there is
at most one incident tight row, and there is no boundary normal.  The active
normals have rank at most two, contradicting the rank-three vertex condition
of full pinning.
\end{proof}

\begin{lemma}[Unique-apex calculation]
\label{lem:unique-apex}
In the literal division-plane setting of \Cref{thm:division-residual}, at
most one tight triangle of atom type \(abC\) is incident with a fixed silent
vertex.
\end{lemma}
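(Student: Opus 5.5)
The plan is to turn first-order invisibility of a tight triangle of type \(abC\) into a linear equation that fixes the position of its third vertex uniquely. Throughout I work with the normal form \eqref{eq:two-silent-normal-form} obtained in the proof of \Cref{thm:division-residual}. After subtracting translations, every residual velocity has \(v_a=u\), \(v_b=Tu\) and \(v_j=0\) for \(j\in C\). Here \(u\) ranges over all of \(\R^2\), because evaluation at \(a\) is an isomorphism. The map \(T\in GL_2(\R)\) has no real eigenvector, so \(T-I\) is invertible. Subtracting translations is harmless: every residual lies in \(\ker J_{\rm tight}^{\rm tr}\), which contains \(\trans\), and determinants are translation invariant. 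Hence each tight triangle row still annihilates these normalized velocities.

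Fix a tight triangle \(t=(a,b,c)\) with \(c\in C\). Write \(\det(x,w)=x^TJw\) with \(J=\left(\begin{smallmatrix}0&1\\-1&0\end{smallmatrix}\right)\), and differentiate \(D_t=\det(p_b-p_a,p_c-p_a)\) along \(v\). This gives
\[
 DD_t[v]=\det(Tu-u,\,p_c-p_a)+\det(p_b-p_a,\,-u).
\]
The orientation sign \(s_t\) only rescales this, so tightness of \(t\) forces \(DD_t[v]=0\) for every \(u\in\R^2\). The expression is a linear functional of \(u\), and its vanishing identically means
\[
 (T-I)^TJ(p_c-p_a)=-J(p_b-p_a).
\]
Since \(T-I\) and \(J\) are invertible, this determines the vector
\[
 p_c-p_a=-J^{-1}(T-I)^{-T}J(p_b-p_a)
\]
uniquely from \(p_a\), \(p_b\) and \(T\).

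Now suppose two tight triangles \((a,b,c)\) and \((a,b,c')\) with \(c,c'\in C\) were both tight. The previous step would give \(p_c=p_{c'}\). Because \(\Delta>0\), no two labels share a position: otherwise any triangle containing both would have area zero. Hence \(c=c'\), and at most one tight row of type \(abC\) is incident with \(a\).

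The only point needing care is confirming that \(u\) really ranges over all of \(\R^2\), so that the functional must vanish identically rather than on a single line. This is exactly the dimension count \(\dim W=2\) together with injectivity of evaluation at \(a\), both already established in the proof of \Cref{thm:division-residual}. The argument is symmetric in \(a\) and \(b\), using \(T^{-1}\) for \(b\).
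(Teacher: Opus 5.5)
Your proof is correct and is essentially the paper's argument. In the normal form \(v_a=u\), \(v_b=Tu\), \(v_j=0\) on \(C\), you differentiate the oriented area of \((a,b,j)\) for all \(u\in\R^2\); this pins \(p_j-p_a\) through the invertible map \((T-I)^T\) composed with a quarter-turn, so two apices would coincide. The differences are cosmetic: your \(J\) is the paper's \(-\Omega\), and the closing remark about \(b\) is unnecessary, since every \(abC\) row is already incident with both silent labels.
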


\begin{proof}
Subtract translations so that every residual velocity vanishes on the
positive component.  Evaluation at either silent label is an isomorphism, so
the velocities can be written
\[
 v_a=u,\qquad v_b=Tu,\qquad v_j=0\quad(j\in C),
\]
where \(T\) has no real eigenvector and hence \(T-I\) is invertible.
Differentiating the oriented area of \((a,b,j)\) for every \(u\) gives
\[
 (T-I)^T\Omega(p_j-p_a)+\Omega(p_b-p_a)=0,
 \qquad
 \Omega=\begin{pmatrix}0&-1\\1&0\end{pmatrix}.
\]
If \(j,j'\in C\) were two apices, subtraction and invertibility of
\((T-I)^T\Omega\) would give \(p_j=p_{j'}\), contradicting positive minimum
area.  The derivation is expanded in \Cref{app:rank-one-free}.
\end{proof}

The literal hypothesis is essential: the atom count and the silent-vertex
argument use the whole two-dimensional quotient \(H_0\), not an arbitrary
plane chosen inside a larger shared kernel.

\begin{theorem}[Silent vertices in a maximal residual]
\label{thm:maximal-silent}
Suppose \(0\ne\dim W=\dim H_0=r\) at a terminal representative.  If \(c\) is the
number of positive components and \(s\) the number of stress-silent vertices,
then
\begin{equation}
 s\ge3-c.                             \label{eq:maximal-silent}
\end{equation}
\end{theorem}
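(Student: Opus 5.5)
The plan is to run the first half of the proof of \Cref{thm:division-residual} with \(H_0\) of arbitrary dimension \(r\).

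\textbf{Step 1: set up the residual.} Since \(\dim W=\dim H_0=r\ne0\), \Cref{thm:rank-one-free-dimension} shows that \(r\) is even and \(W=\{(u,Tu):u\in H_0\}\). Here \(T:H_0\to H_0\) has no real eigenvector. In particular the first coordinate projection \(\pi_1(W)\) is all of \(H_0\).

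\textbf{Step 2: collapse \(H_0\) on the positive support.} I would show that every \(h\in H_0\) is constant on each positive component. For \(u\in H_0\), the velocity \(v=(u,Tu)\in W\) has shadow \(Q_i=(u_i,(Tu)_i)\). By \Cref{prop:shadow-transfer}, each tight triangle has transfer \(C_t(u)\) with trace zero and \(q_t=2\Delta\det C_t(u)\).

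Fix a relative-interior multiplier. Because \(u,Tu\in\Ksh\), we get \(\sum_{t\in\mathcal T_+}\lambda_t\det C_t(u)=0\). The key claim is that, for each \(t\), the map \(u\mapsto\det C_t(u)\) is semidefinite of a fixed sign, uniformly over \(t\). If so, the weighted sum forces \(\det C_t(u)=0\) for all \(u\), hence \(C_t(u)=0\).

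To get the sign, restrict to the two-dimensional span of \(u,Tu\). This span is \(T\)-invariant and is a "division plane" \(\Span\{I,M\}\) with \(M\) having no real eigenvalue. Then apply \Cref{lem:dual-determinant} as in \eqref{eq:division-curvature}. More directly, write \(u=\operatorname{Re}w\) for a complex eigenvector \(w\) of \(T\) with eigenvalue \(\alpha+i\beta\), \(\beta\ne0\). A computation then shows \(\det C_t=\beta\,\cdot\)(a nonnegative Hermitian form in \(w\)) after a sign normalization. I expect this sign-uniformity across \(u\), extended from the two-dimensional case to general \(r\), to be the main obstacle. My first attempt is to decompose \(H_0\) into \(T\)-invariant real planes. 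A safer route is to use only the conclusion on the plane generated by a single \(u\): \(C_t(u)=0\), so \(u\) is constant on every triangle of \(\mathcal T_+\). Since \(u\) is arbitrary, this is enough.

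\textbf{Step 3: count atoms.} Let \(Z\) be the set of stress-silent labels and \(s=|Z|\). By Step 2, \(\Ksh\) is contained in the space \(\mathcal S\) of functions constant on each component and arbitrary on \(Z\). The reverse inclusion holds because every stress kills \(\mathcal S\). Hence \(\dim\Ksh=c+s\), and so \(r=c+s-1\).

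\textbf{Step 4: conclude.} Since \(r\) is even and \(r\ge1\), we have \(r\ge2\), so \(c+s\ge3\), which is \eqref{eq:maximal-silent}. \Cref{thm:components} gives \(c\le2\), so the bound is nontrivial and says \(s\ge1\), or \(s\ge2\) when \(c=1\).
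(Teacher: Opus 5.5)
\textbf{Step 2 of your proposal has a genuine gap, which you half-identify yourself.} Your overall architecture (weighted cancellation $\sum_t\lambda_tq_t=0$ on the maximal positive support, followed by an atom count) is the paper's.

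Your ``safer route'' rests on the assertion that $\Span\{u,Tu\}$ is $T$-invariant. This fails for generic $u$ unless $T$ satisfies a quadratic polynomial. For generic $u$ the vectors $u,Tu,T^2u$ are independent, and the velocity attached to $Tu$ is $(Tu,T^2u)$, whose second coordinate is not a combination of $u$ and $Tu$. So the velocities over that span are not of the form $v_i=MR_i$ with $M$ in a two-dimensional matrix plane, and \Cref{lem:dual-determinant} does not apply.

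Nor is your ``key claim'' true as an a priori statement. Imposing the trace condition for every $u\in H_0$ forces $C_t(u)=\begin{pmatrix}-\phi_t(Tu)&\phi_t(u)\\-\phi_t(T^2u)&\phi_t(Tu)\end{pmatrix}$ for some functional $\phi_t$ on $H_0$. Hence $\det C_t(u)=\phi_t(u)\phi_t(T^2u)-\phi_t(Tu)^2$. On a single $T$-invariant plane with eigenvalues $\alpha\pm i\beta$ this equals $-(\phi_t(Tu)-\alpha\phi_t(u))^2-\beta^2\phi_t(u)^2\le0$. On all of $H_0$, however, it is generally indefinite. For instance, if $T$ acts as $\Omega$ on one invariant plane and as $10\,\Omega$ on another, it takes both signs whenever $\phi_t$ is nonzero on both planes.

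Your ``first attempt'' of splitting $H_0$ into invariant planes does handle $T$ semisimple over $\C$. It fails when $T$ has a real Jordan block of size at least four for a complex eigenvalue pair, since such a block contains only one invariant plane. As written, neither the claim that every $h\in H_0$ is constant on each positive component nor the equality $\Ksh=\mathcal S$, $r=c+s-1$, in Step~3 is proved.

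\textbf{Repair.} The paper's fix is to ask for less.
\begin{itemize}
\item Take one $T$-invariant real plane $U_0=\Span\{u_1,u_2\}$, formed by the real and imaginary parts of a complex eigenvector.
\item On $U_0$ the velocities are $v_i=M(\alpha)R_i$ with $R_i=((u_1)_i,(u_2)_i)^T$, and $M(\alpha)$ ranges over a genuine division plane.
\item The base transfers lie in its trace annihilator, and \Cref{lem:dual-determinant} gives one sign for all rows.
\item Positive-weight cancellation then makes $u_1,u_2$ constant on every positive component.
\item An inclusion now suffices: $U_0$ injects into $\mathcal S/\langle\one\rangle$, which has dimension $c+s-1$, so $2\le c+s-1$.
\end{itemize}

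If you want your stronger Step~2, iterate along a $T$-invariant flag.
\begin{itemize}
\item Suppose every $\phi_t$ with $t\in\mathcal T_+$ vanishes on a $T$-invariant subspace $V$. Then $\det C_t(u)$ depends only on $u$ modulo $V$.
\item The induced map on $H_0/V$ still has no real eigenvalue, so it has an invariant plane.
\item On the preimage of that plane the form is again semidefinite, and the same cancellation enlarges $V$ by two dimensions.
\end{itemize}
This iteration is precisely the step your proposal is missing.
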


\begin{proof}
In the graph form \eqref{eq:graph-T}, \(T:H_0\to H_0\) has no real
eigenvector.  Choose a nonreal eigenvalue of its complexification.  The real
and imaginary parts of a corresponding eigenvector span a real
two-dimensional \(T\)-invariant subspace
\(U_0=\Span\{u_1,u_2\}\subset H_0\), on which \(T\) has no invariant real
line.

Put \(R_i=((u_1)_i,(u_2)_i)^T\).  For
\(u=\alpha_1u_1+\alpha_2u_2\), invariance of \(U_0\) expresses
\((u,Tu)\) in the form \(v_i=M(\alpha)R_i\).  The matrices
\(M(\alpha)\) form a two-dimensional plane.  A nonzero \(M(\alpha)\) is
singular exactly when \(u\) and \(Tu\) are linearly dependent, which is
impossible.  Thus this plane is a division plane.

For each tight row, use the fixed base transfer formed from the \(R_i\), as in
\eqref{eq:base-transfer}.  First-order invisibility places it in the trace
annihilator of the division plane.  The determinant-definiteness and
positive-weight cancellation argument leading to
\eqref{eq:positive-support-collapse} then shows that both \(u_1\) and \(u_2\)
are constant on every positive component.

Modulo constants, the scalar functions constant on \(c\) positive components
and arbitrary on \(s\) silent vertices form a space of dimension \(c+s-1\).
The two-dimensional space \(U_0\) injects into this quotient, so
\(2\le c+s-1\), which is the claimed inequality.
\end{proof}

Unlike \Cref{thm:division-residual}, this theorem is not an exclusion.  The
number of silent vertices is not bounded above in a higher-dimensional shared
kernel.

\subsection{Flat and mixed positive support}
\label{subsec:flat-mixed}

For every actual stress and every invisible \(v=(\xi,\eta)\),
\begin{equation}
 \sum_t\lambda_tq_t(v)=\xi^TB\eta=0.  \label{eq:weighted-q}
\end{equation}
There are two fundamentally different possibilities.

\begin{description}[leftmargin=2.4cm,style=nextline]
\item[Flat support.]
\(q_t(v)=0\) for every triangle in the maximal positive support.  The
edge-overlap lemma then organizes chains with noncollapsed shared shadow
edges into shadow lines.

\item[Mixed support.]
Some positive rows have nonzero \(q_t\), necessarily with cancellations in
\eqref{eq:weighted-q}.  Since determinant on
\(\mathfrak{sl}_2(\R)\) is indefinite, this is compatible with all
first-order equations.
\end{description}

The implication
\[
 \sum_t\lambda_tq_t=0
 \quad\Longrightarrow\quad
 q_t=0\ \text{for every positive row}
\]
is false without an additional definite-plane hypothesis such as the one in
\Cref{thm:division-residual}.  The mixed sector remains one of the explicit
open branches.

\section{Partial stresses across a two-vertex separator}
\label{sec:separator}

\subsection{The two-terminal law}

Let \(\mathcal H\) be a positive subfamily of active triangles.  Its vertices
are partitioned as
\[
 V(\mathcal H)=I\sqcup\{a,b\},
\]
where \(I\) is the internal set and \(a,b\) are terminals.  For the first
three results, no isolation from the rest of the positive support is needed.
Let \(\xi\in\Ksh\), and suppose the partial residual vanishes at every
internal vertex:
\begin{equation}
 (B_{\mathcal H}\xi)_i=0\qquad(i\in I).            \label{eq:internal-vanish}
\end{equation}

\begin{theorem}[Two-terminal residual and load law]
\label{thm:two-terminal}
Under \eqref{eq:internal-vanish}, there is a scalar \(\rho\) such that
\begin{equation}
 B_{\mathcal H}\xi=\rho(e_a-e_b).                  \label{eq:terminal-residual}
\end{equation}
If \(p_i=(X_i,Y_i)\), then
\begin{equation}
 \sum_i\xi_i(b_i^{\mathcal H})^T
 =\frac{\rho}{2}(Y_b-Y_a,\,X_a-X_b)
 =\frac{\rho}{2}\Omega(p_a-p_b)^T,                \label{eq:terminal-load}
\end{equation}
where \(\Omega(x,y)=(-y,x)\).  In addition,
\begin{equation}
 \rho(\xi_a-\xi_b)=0.                              \label{eq:skew-dichotomy}
\end{equation}
\end{theorem}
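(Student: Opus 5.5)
The residual law comes from skew symmetry and $B_t\one=0$. The vector $w=B_{\mathcal H}\xi$ is supported on $V(\mathcal H)$, because every $B_t$ with $t\in\mathcal H$ is supported on the vertices of $t$. By \eqref{eq:internal-vanish}, $w$ vanishes on $I$, so it is supported on $\{a,b\}$. Moreover $\one^Tw=-(B_{\mathcal H}\one)^T\xi=0$, using $B_{\mathcal H}^T=-B_{\mathcal H}$ and $B_{\mathcal H}\one=0$ from \eqref{eq:Bt-basic}. Hence $w_a=-w_b=:\rho$, which is \eqref{eq:terminal-residual}.

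The dichotomy follows from one scalar identity. Skew symmetry gives $\xi^TB_{\mathcal H}\xi=0$. By \eqref{eq:terminal-residual} this quantity also equals $\rho(\xi_a-\xi_b)$, which proves \eqref{eq:skew-dichotomy}. The hypothesis $\xi\in\Ksh$ is not used in these two steps; only the internal vanishing matters.

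For the load law I mimic the proof of \Cref{thm:stress-noether}. The definition \eqref{eq:partial-load} splits into real and imaginary parts as
\[
 b_x^{\mathcal H}=\tfrac12B_{\mathcal H}Y,
 \qquad
 b_y^{\mathcal H}=-\tfrac12B_{\mathcal H}X,
\]
where $X,Y$ are the coordinate vectors. I then compute each component:
\[
 \xi^Tb_x^{\mathcal H}=\tfrac12\xi^TB_{\mathcal H}Y
 =-\tfrac12(B_{\mathcal H}\xi)^TY
 =-\tfrac{\rho}{2}(Y_a-Y_b)
 =\tfrac{\rho}{2}(Y_b-Y_a),
\]
and similarly
\[
 \xi^Tb_y^{\mathcal H}=-\tfrac12\xi^TB_{\mathcal H}X
 =\tfrac12(B_{\mathcal H}\xi)^TX
 =\tfrac{\rho}{2}(X_a-X_b).
\]
Together these give the first equality in \eqref{eq:terminal-load}. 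The second equality is the definition $\Omega(x,y)=(-y,x)$ applied to $p_a-p_b=(X_a-X_b,\,Y_a-Y_b)$.

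There is no real obstacle here. The only points needing care are sign bookkeeping in the transpose step $\xi^TB_{\mathcal H}=-(B_{\mathcal H}\xi)^T$, and checking that the sign convention of \eqref{eq:real-equilibrium} carries over unchanged to the partial load.
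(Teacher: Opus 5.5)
Your proposal is correct and follows the paper's proof essentially step for step. Like the paper, you get the residual from support on \(\{a,b\}\) together with \(\one^TB_{\mathcal H}=0\), the load law from the real/imaginary split of \eqref{eq:partial-load} plus one skew transpose, and the dichotomy from \(\xi^TB_{\mathcal H}\xi=0\). Your extra justification that \(B_{\mathcal H}\xi\) is supported on \(V(\mathcal H)\) and your remark that \(\xi\in\Ksh\) is never used are both accurate.
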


\begin{proof}
The vector \(B_{\mathcal H}\xi\) is supported on \(a,b\).  Its coordinates
sum to zero because \(\one^TB_{\mathcal H}=0\), which gives
\eqref{eq:terminal-residual}.

From \eqref{eq:partial-load},
\[
 b_x^{\mathcal H}=\frac12B_{\mathcal H}Y,\qquad
 b_y^{\mathcal H}=-\frac12B_{\mathcal H}X.
\]
Skew symmetry now gives
\[
\begin{aligned}
 \xi^Tb_x^{\mathcal H}
 &=-\frac12(B_{\mathcal H}\xi)^TY
   =\frac{\rho}{2}(Y_b-Y_a),\\
 \xi^Tb_y^{\mathcal H}
 &=\frac12(B_{\mathcal H}\xi)^TX
   =\frac{\rho}{2}(X_a-X_b).
\end{aligned}
\]
Finally,
\[
 0=\xi^TB_{\mathcal H}\xi=\rho(\xi_a-\xi_b).
\]
\end{proof}

The last identity splits the analysis at no cost: either the flux vanishes or
the shared coordinate collapses across the terminals.  If \(\rho\ne0\), put
\[
 c=\xi_a=\xi_b.
\]
Combining force balance with \eqref{eq:terminal-load} then gives the internal
form
\begin{equation}
 \sum_{i\in I}(\xi_i-c)(b_i^{\mathcal H})^T
 =\frac{\rho}{2}\Omega(p_a-p_b)^T.                \label{eq:internal-load-law}
\end{equation}

\begin{figure}[t]
\centering
\begin{tikzpicture}[>=Latex,line cap=round,line join=round]
  \node[circle,draw,thick,minimum size=7mm] (a) at (-3,0) {\(a\)};
  \node[circle,draw,thick,minimum size=7mm] (b) at (3,0) {\(b\)};
  \draw[rounded corners,thick] (-1.8,-1.45) rectangle (1.8,1.45);
  \node at (0,1.08) {internal block \(I\)};
  \foreach \x/\y in {-1/-.45,-.3/.35,.55/-.55,1/.3}
    \fill (\x,\y) circle (2pt);
  \draw[thick] (a)--(-1,-.45);
  \draw[thick] (a)--(-.3,.35);
  \draw[thick] (b)--(.55,-.55);
  \draw[thick] (b)--(1,.3);
  \draw[->,very thick] (-2.55,.42)--(-1.9,.42)
    node[midway,above] {\(\rho\)};
  \draw[->,very thick] (2.55,.42)--(1.9,.42)
    node[midway,above] {\(\rho\)};
  \draw[->,very thick] (.55,-.55)--(.55,-1.95)
    node[below] {square reaction};
  \draw[dashed] (-4,-1.95)--(4,-1.95);
\end{tikzpicture}
\caption{A positive substress with terminals \(a,b\).  Internal vanishing
leaves the antisymmetric terminal residual \(\rho(e_a-e_b)\).  If the block is
isolated, the partial moment identity forces a nonzero square reaction at an
internal vertex.}
\label{fig:separator}
\end{figure}

\subsection{The Dirichlet carrier}

Assume \(\rho\ne0\), set \(w=\xi-c\one\), and order the vertices with the two
terminals first.  Write
\begin{equation}
 B_{\mathcal H}=
 \begin{pmatrix}
  A&E\\
  -E^T&C
 \end{pmatrix},                                   \label{eq:dirichlet-block}
\end{equation}
where \(C\) is the internal skew block.  Since \(w_a=w_b=0\),
\eqref{eq:terminal-residual} is equivalent to
\begin{equation}
 Cw_I=0,\qquad Ew_I=\rho(1,-1)^T.                 \label{eq:dirichlet-equations}
\end{equation}

\begin{proposition}[Dirichlet flux carrier]
\label{prop:dirichlet-carrier}
A nonzero two-terminal flux requires a zero mode of the internal skew block
whose boundary trace under \(E\) is nonzero.  In particular, if \(C\) is
invertible, then \(\rho=0\).
\end{proposition}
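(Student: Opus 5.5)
The argument will read everything off the Dirichlet equations \eqref{eq:dirichlet-equations}. These are already available once \(\rho\ne0\), because the skew dichotomy \eqref{eq:skew-dichotomy} of \Cref{thm:two-terminal} forces \(\xi_a=\xi_b=c\).

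First I will check how those equations arise. Put \(w=\xi-c\one\). Since \(B_{\mathcal H}\one=0\), we have \(B_{\mathcal H}w=B_{\mathcal H}\xi=\rho(e_a-e_b)\). The terminal coordinates of \(w\) vanish, so in the block form \eqref{eq:dirichlet-block} the product \(B_{\mathcal H}w\) has terminal part \(Ew_I\) and internal part \(Cw_I\). Comparing with \eqref{eq:terminal-residual} gives \(Cw_I=0\) and \(Ew_I=\rho(1,-1)^T\).

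Now suppose \(\rho\ne0\). Then \(Ew_I=\rho(1,-1)^T\ne0\), so in particular \(w_I\ne0\). Together with \(Cw_I=0\), this makes \(w_I\) a zero mode of the internal skew block \(C\) whose boundary trace \(Ew_I\) is nonzero. That is exactly the first assertion.

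For the second assertion, take the contrapositive. If \(C\) is invertible, then \(Cw_I=0\) forces \(w_I=0\). Hence \(Ew_I=0\), which gives \(\rho=0\). The only point needing care is the case split: when \(\rho=0\) there is nothing to prove, and when \(\rho\ne0\) the collapse \(\xi_a=\xi_b\) is supplied by \eqref{eq:skew-dichotomy}, so the block equations are legitimate. I expect no real obstacle beyond this bookkeeping.
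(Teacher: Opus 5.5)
Your proposal is correct and takes the same approach as the paper. The paper simply says the result is immediate from the Dirichlet equations \eqref{eq:dirichlet-equations}. You have written that reading out in full, including the block computation and the case split through the skew dichotomy \eqref{eq:skew-dichotomy}.
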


\begin{proof}
This is immediate from \eqref{eq:dirichlet-equations}.
\end{proof}

The proposition explains a parity trap in a natural elimination.  If the
internal set has five vertices, \(C\) is a \(5\times5\) skew matrix and is
automatically singular.  The Schur complement that would eliminate the
internal variables is unavailable precisely in the potentially nonzero-flux
chart.

\begin{theorem}[Pfaffian carrier on five internal vertices]
\label{thm:pfaffian-carrier}
Let \(|I|=5\).  Define
\begin{equation}
 \kappa_i=(-1)^{i-1}\Pf(C_{\widehat i}),
 \qquad i=1,\ldots,5,                              \label{eq:pfaffian-vector}
\end{equation}
where \(C_{\widehat i}\) is obtained by deleting row and column \(i\).  Then
\[
 C\kappa=0.
\]
If \(\rank C=4\), then \(\ker C=\langle\kappa\rangle\), and every nonzero
flux has the form
\begin{equation}
 w_I=t\kappa,\qquad
 \rho(1,-1)^T=tE\kappa.                            \label{eq:pfaffian-port}
\end{equation}
In triangle-stress coordinates the resulting port flux is a homogeneous
cubic polynomial in the signed row weights.
\end{theorem}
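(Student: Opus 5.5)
The argument has three parts: show \(C\kappa=0\) by a Pfaffian Laplace expansion, read off the flux form from \eqref{eq:dirichlet-equations}, and then count degrees.

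\emph{Step 1: \(C\kappa=0\).} For each row index \(j\), I would form the \(6\times6\) skew matrix
\[
\widehat C_j=\begin{pmatrix}C & c_j\\ -c_j^T & 0\end{pmatrix},
\]
where \(c_j\) is the \(j\)-th column of \(C\). The last row and column of \(\widehat C_j\) repeat the \(j\)-th row and column of \(C\). I expect it to vanish because simultaneous row and column operations on a skew matrix leave the Pfaffian unchanged. Subtracting the \(j\)-th row and column from the last row and column makes the last row and column zero, so \(\Pf(\widehat C_j)=0\).

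Next, I would expand \(\Pf(\widehat C_j)\) along its last row, using \(\Pf(M)=\sum_{i}(-1)^{i+6}m_{i6}\Pf(M_{\widehat{i6}})\) with the paper's sign convention \((-1)^{i-1}\) up to a global sign. The minor obtained by deleting \(i\) and \(6\) is exactly \(C_{\widehat i}\), and the entries are \(C_{ij}\). The expansion therefore gives \(\pm\sum_i C_{ji}\kappa_i=\pm(C\kappa)_j\), and hence \(C\kappa=0\). The main obstacle is bookkeeping: the sign convention of the expansion must be checked so that the alternating signs \((-1)^{i-1}\) come out uniformly. As a cross-check, I would verify \(C\kappa=0\) on a generic \(5\times5\) skew matrix, or argue via the adjugate. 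For odd size, \(\operatorname{adj}C\) is the rank-one matrix \(\kappa\kappa^T\), and \(C\operatorname{adj}C=\det C\cdot I=0\).

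\emph{Step 2: kernel and flux form.} If \(\rank C=4\), some \(4\times4\) principal Pfaffian is nonzero. This holds because a skew matrix of rank \(4\) has a nonsingular \(4\times4\) principal submatrix, the rank being attained on a principal minor. Hence \(\kappa\ne0\). Since \(\dim\ker C=1\), this gives \(\ker C=\langle\kappa\rangle\). Then \eqref{eq:dirichlet-equations} forces \(w_I=t\kappa\) and \(\rho(1,-1)^T=tE\kappa\), which is \eqref{eq:pfaffian-port}. By \Cref{prop:dirichlet-carrier}, a nonzero flux needs \(E\kappa\ne0\).

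\emph{Step 3: degree count.} Each entry of \(C\) and \(E\) is a linear form in the signed row weights \(s_t\lambda_t\), by \eqref{eq:elementary-stress} and \eqref{eq:partial-stress}. Each \(\kappa_i\) is a Pfaffian of a \(4\times4\) skew block, hence a quadratic form in these weights. Each component of \(E\kappa\) is then a homogeneous cubic. So after normalizing the free scale \(t\), the port flux \(\rho=t(E\kappa)_a\) is a homogeneous cubic polynomial in the signed weights, as claimed.
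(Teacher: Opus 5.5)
Your proposal is correct and follows the paper's proof: you use the Pfaffian cofactor identity \(C\kappa=0\), feed the one-dimensional kernel into \eqref{eq:dirichlet-equations}, and count degrees as quadratic \(\kappa\) times linear \(E\). You go slightly beyond the paper in two places: you actually prove the cofactor identity via the bordered \(6\times6\) skew matrix, and you justify \(\kappa\ne0\) when \(\rank C=4\) using the principal-minor rank property of skew matrices, a step the paper leaves implicit when it asserts \(\ker C=\langle\kappa\rangle\).
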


\begin{proof}
The identity \(C\kappa=0\) is the Pfaffian cofactor identity for an odd skew
matrix.  When \(\rank C=4\), its kernel is one-dimensional, so
\eqref{eq:dirichlet-equations} gives \eqref{eq:pfaffian-port}.  Each component
of \(\kappa\) is quadratic in the entries of \(C\), while \(E\) is linear in
the triangle weights.
\end{proof}

For a concrete row convention, let \(\alpha_{ij}\), \(\beta_{ij}\), and
\(\gamma_i\) be the signed weights of the rows
\((a,i,j)\), \((b,i,j)\), and \((a,b,i)\), respectively.  Then
\[
 C_{ij}=\alpha_{ij}+\beta_{ij}
\]
and, up to the global sign chosen for \(\rho\), the cubic carrier is
\begin{equation}
 \mathcal P_{\rm flux}=
 \sum_{i<j}\alpha_{ij}(\kappa_i-\kappa_j)
 -\sum_i\gamma_i\kappa_i.                         \label{eq:cubic-flux}
\end{equation}
The formula is an obstruction polynomial, not an assertion that every
coefficient choice is geometrically realizable.

\subsection{Boundary reactions and block budgets}

\begin{definition}
The positive block \(\mathcal H\) is \emph{internally isolated} if no positive
triangle outside \(\mathcal H\) contains a vertex of \(I\).  Blocks are
\emph{pairwise internally disjoint} if their internal vertex sets are
disjoint.
\end{definition}

\begin{theorem}[Internal reaction]
\label{thm:internal-reaction}
An internally isolated positive two-terminal block has a nonzero full KKT
reaction at an internal vertex.
\end{theorem}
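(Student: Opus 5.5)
We argue by contradiction, assuming the full KKT reaction \(b_i\) vanishes at every internal vertex \(i\in I\). The tool is the partial affine moment identity, \Cref{thm:partial-noether}, applied to \(\mathcal H\) with the weights of the fixed actual multiplier. Since \(\mathcal H\) is positive, its mass satisfies \(m_{\mathcal H}>0\), so
\[
 \sum_i p_i(b_i^{\mathcal H})^T=m_{\mathcal H}\Delta I_2
\]
has rank two. The plan is to show that the left-hand side has rank at most one when all internal full reactions vanish.

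\textbf{Step 1: the partial load equals the full load on \(I\).} Internal isolation means no positive triangle outside \(\mathcal H\) contains an internal vertex. So for \(i\in I\), row \(i\) of the full stress \(B\) equals row \(i\) of \(B_{\mathcal H}\); triangles with zero multiplier contribute nothing. Evaluating at \(z\) gives \(b_i^{\mathcal H}=b_i\) for \(i\in I\), exactly as in the proof of \Cref{thm:components}. Vertices outside \(V(\mathcal H)=I\sqcup\{a,b\}\) have zero row in \(B_{\mathcal H}\), so \(b^{\mathcal H}\) is supported on \(I\cup\{a,b\}\).

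\textbf{Step 2: reduce to the terminals.} Under the contradiction hypothesis \(b_i=0\) on \(I\), the partial load is supported on \(\{a,b\}\). Force balance \(\sum_i b_i^{\mathcal H}=0\) then gives \(b_a^{\mathcal H}=-b_b^{\mathcal H}=:f\). The moment tensor becomes
\[
 \sum_i p_i(b_i^{\mathcal H})^T=(p_a-p_b)f^T,
\]
which has rank at most one. This contradicts \(m_{\mathcal H}\Delta I_2\) with \(m_{\mathcal H}\Delta>0\). Hence some internal vertex carries a nonzero full KKT reaction. Because full reactions are encoded by boundary multipliers via \eqref{eq:outward-load}, that vertex lies on the square boundary. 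This matches the caption of \Cref{fig:separator}.

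The only delicate step is Step 1: we must check that internal isolation is phrased relative to the positive support of the same multiplier whose weights define \(\mathcal H\), so that zero-weight tight triangles through \(I\) do not alter the rows. If the definition refers to the maximal positive support instead, we use a relative-interior multiplier and the same argument goes through. Note that the hypothesis \eqref{eq:internal-vanish} and the flux \(\rho\) are not needed here; only positivity of the block mass and the rank-one structure of a two-point moment are used.
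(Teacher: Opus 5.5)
Your proof is correct and is essentially the paper's argument: internal isolation identifies the partial load on $I$ with the full KKT reaction, and if that reaction vanished on $I$ the partial moment would reduce to the rank-one tensor $(p_a-p_b)f^T$, contradicting $m_{\mathcal H}\Delta I_2$ from the partial affine moment identity. The only differences are that the paper invokes isolation after reaching the contradiction rather than before, and that your closing fallback is unnecessary, since isolation relative to the maximal support already implies isolation relative to the positive support of any fixed multiplier, so no change of multiplier is needed.
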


\begin{proof}
Suppose \(b_i^{\mathcal H}=0\) for every \(i\in I\).  Partial force balance
leaves opposite loads at \(a,b\).  The moment tensor
\[
 \sum_i p_i(b_i^{\mathcal H})^T
\]
then has rank at most one, being an outer product of \(p_a-p_b\) with one
load vector.  By \Cref{thm:partial-noether}, it equals
\(m_{\mathcal H}\Delta I_2\), which has rank two because
\(m_{\mathcal H},\Delta>0\).  This is impossible.  Isolation means that the
nonzero partial load at an internal vertex is not cancelled by another
positive block, so it is the full boundary reaction.
\end{proof}

\begin{theorem}[Separator budgets]
\label{thm:separator-budget}
There are at most eight pairwise internally disjoint, internally isolated
positive two-terminal blocks.  For a fixed terminal pair and a fixed shared
coordinate \(\xi\), at most four such pairwise internally disjoint blocks can
have nonzero flux.
\end{theorem}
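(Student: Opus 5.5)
The first bound is a counting argument built on \Cref{thm:internal-reaction}. Each internally isolated positive two-terminal block has an internal vertex \(i\in I\) with nonzero full KKT reaction \(b_i\ne0\). Such a vertex lies on the boundary of the square and carries a positive side multiplier. Since the internal vertex sets are pairwise disjoint, these witnessing vertices are distinct for distinct blocks. The proof of \Cref{thm:components} showed that no three points are collinear, so each side contains at most two points of \(P\). There are therefore at most eight point--side incidences. Each block uses at least one incidence through its internal witness, and a corner point is a single point lying on two sides. Counting boundary points rather than incidences gives at most \(8\) points in any case, since four sides with at most two points each, with corners shared, give at most eight distinct boundary points. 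Hence there are at most eight blocks.

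For the second bound I would sharpen the witness so that each block with nonzero flux claims a distinguished side, not merely a boundary point. Fix the terminals \(a,b\) and \(\xi\), and suppose \(\rho\ne0\). By \eqref{eq:skew-dichotomy}, \(\xi_a=\xi_b=c\). The internal load law \eqref{eq:internal-load-law} gives
\[
\sum_{i\in I}(\xi_i-c)(b_i^{\mathcal H})^T=\tfrac{\rho}{2}\Omega(p_a-p_b)^T\ne0,
\]
because \(p_a\ne p_b\). By isolation, \(b_i^{\mathcal H}=b_i\) is the full boundary reaction for \(i\in I\). So some internal vertex \(i\) has \(\xi_i\ne c\) and \(b_i\) nonzero in a component along which \(\Omega(p_a-p_b)\) is nonzero.

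The next step is to aggregate over all nonzero-flux blocks with these terminals. Summing the displayed identity over \(k\) such blocks gives a total of \(\sum_j\frac{\rho_j}{2}\Omega(p_a-p_b)^T\). The contributing internal vertices are disjoint boundary points. In the vector \((\xi_i-c)b_i\), the horizontal reaction components live on vertical sides and the vertical components on horizontal sides. I would then argue that each block's contribution must use a boundary point on a side pair determined by the direction \(\Omega(p_a-p_b)\). If both coordinates of \(\Omega(p_a-p_b)\) are nonzero, every block needs a contact on a vertical side and a contact on a horizontal side. If one coordinate vanishes, every block needs a contact on a vertical side, or every block needs one on a horizontal side. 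In either case each block consumes at least one of only four available points on one pair of opposite sides, or two points out of eight. Disjointness then gives \(k\le4\).

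The main obstacle is making this side-pair claim rigorous. A single block could satisfy the identity through cancellation among several internal reactions, and a corner point contributes to both components. I would handle this by noting that each block individually realizes the nonzero vector \(\frac{\rho}{2}\Omega(p_a-p_b)\), so its nonzero coordinate must come from some internal point with the matching type of reaction. Counting points with a horizontal or vertical reaction, at most four of each since there are two sides with at most two points each, then yields the bound of four without any cancellation between blocks.
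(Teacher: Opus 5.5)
Your proposal is correct and follows the paper's proof. The first bound uses the distinct internal reaction witnesses supplied by \Cref{thm:internal-reaction}, of which there can be at most eight boundary points; the second uses a nonzero component of \eqref{eq:internal-load-law}, which is fixed by \(p_a-p_b\) alone and forces an internal contact on one fixed pair of parallel sides carrying at most four points. The aggregation over blocks and the ``two points out of eight'' aside are unnecessary, and the latter is unreliable since a corner can serve both components, but your per-block argument in the last paragraph already suffices.
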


\begin{proof}
By \Cref{thm:internal-reaction}, each block contains a distinct internal point
with a square reaction.  No side contains three points when \(\Delta>0\), so
the four sides contain at most eight distinct reaction points.  This proves
the first bound.

For the second, nonzero flux gives \(\xi_a=\xi_b\).  Put
\(R_i=(\xi_i,Y_i)\).  If \(R_a\ne R_b\), then \(Y_a\ne Y_b\), and the first
component of \eqref{eq:internal-load-law} is nonzero.  Some internal reaction
therefore has a horizontal component and lies on one of the two vertical
sides.  If \(R_a=R_b\), the physical points \(p_a,p_b\) are distinct, so
\(X_a\ne X_b\); the second component is nonzero and some internal reaction
lies on a horizontal side.  For the fixed terminal pair the relevant pair of
parallel sides is the same for every block.  Those two sides contain at most
four points.
\end{proof}

\begin{remark}
The qualifiers in \Cref{thm:separator-budget} are substantive.  The theorem
does not bound the number of vertices or positive rows inside one block, does
not bound recursive separator depth or treewidth, and does not apply to
overlapping blocks whose partial loads may cancel at a shared internal
vertex.
\end{remark}

\section{An exact one-silent five-cycle laboratory}
\label{sec:c5}

The structural theory above is valid for every \(n\).  We now study one
literal residual class in which the general equations reduce to a finite
symbolic problem.  The result is deliberately stated with all of its
hypotheses.

\subsection{The class}

Assume
\[
 \dim H_0=2,\qquad \dim W=1,
\]
and choose affine shadow coordinates in the form
\begin{equation}
 Q_0=(0,1),\qquad
 Q_i=(\xi_i,0)\quad(1\le i\le5).      \label{eq:one-silent-normal-form}
\end{equation}
The label \(0\) is the unique stress-silent vertex.  Indices
\(i\in\{1,\ldots,5\}\) are read cyclically.  Suppose:
\begin{enumerate}[label=(C\arabic*),leftmargin=2.4em]
\item the five rows
\[
 (0,1,2),(0,2,3),(0,3,4),(0,4,5),(0,5,1)
\]
are tight but unsupported;
\item every other triangle on the old labels \(0,\ldots,5\) is strict;
\item \(\xi_{i+1}-\xi_i\ne0\) for every cycle edge;
\item each core label \(1,\ldots,5\) is nonsilent;
\item first-order trace equations hold on every tight row, and the shadow
coordinates lie in the shared kernel of every actual stress;
\item the relevant positive component covers all five core labels, uses
exactly one further label \(e\), and has a relative-interior multiplier
positive on its maximal support; the same shadow normalization satisfies
\(Q_e=(\xi_e,0)\);
\item the full KKT equilibrium, partial moment identities, square boundary
conditions, and inactive inequalities hold.
\end{enumerate}
The five old rows form a strict induced \(C_5\).  They do not receive positive
KKT multiplier: each contains the silent label \(0\).  Their trace equations
are first-order equations and must not be confused with the second-order
flat-support condition of \Cref{subsec:flat-mixed}.

\begin{figure}[t]
\centering
\begin{tikzpicture}[>=Latex,scale=1.08,line cap=round,line join=round]
  \foreach \i/\a in {1/90,2/18,3/-54,4/-126,5/162}{
    \coordinate (v\i) at (\a:2.0);
    \fill (v\i) circle (2.2pt) node[shift={(\a:0.32)}] {\(\i\)};
  }
  \coordinate (o) at (0,0);
  \fill[white] (o) circle (3pt);
  \draw[thick] (o) circle (3pt) node[below right=2pt] {\(0\)};
  \draw[very thick] (v1)--(v2)--(v3)--(v4)--(v5)--cycle;
  \foreach \i in {1,...,5}{\draw[thin] (o)--(v\i);}
  \node[draw,circle,minimum size=7mm] (e) at (3.45,0.1) {\(e\)};
  \draw[dashed,thick] (e)--(v1);
  \draw[dashed,thick] (e)--(v2);
  \draw[dashed,thick] (e)--(v3);
  \node[align=center] at (0,-2.75)
    {solid wedges: tight unsupported core\\dashed edges: possible completion};
\end{tikzpicture}
\caption{The one-silent \(C_5\) class.  The five old tight triangles use the
silent vertex \(0\) and the cycle edges; they are unsupported.  A
one-external completion may use only rows \((e,i,j)\).  The drawing is
combinatorial, not a plot of the exact coordinates.}
\label{fig:c5}
\end{figure}

\subsection{A strict local realization}

The class is not empty at the determinant-and-trace level.  There is an exact
six-label realization over \(\mathbb Q(\sqrt{1749})\) with orientation word
\[
 (+,+,-,+,-).
\]
After normalizing the active signed double areas to have modulus \(1\), all
fifteen inactive old triangles have strictly larger modulus.  The smallest is
\begin{equation}
 \frac{8003-93\sqrt{1749}}{3180}
 =1.2935983012\ldots .                \label{eq:c5-slack}
\end{equation}
The silent physical point is strictly inside a triangle of core points:
\[
 p_0=\frac{15}{61}p_1+\frac{31}{61}p_2+\frac{15}{61}p_3.
\]
All five adjacent shadow differences and all five shadow determinants
\(q_t\) are nonzero.

\begin{proposition}[Smooth local survivor]
\label{prop:c5-survivor}
After fixing the natural affine gauge, the five equal-area and five trace
equations defining the exact \(C_5\) realization have Jacobian rank \(10\) in
\(14\) variables.  The realization therefore lies on a smooth real local
component of dimension \(4\).  The strict inactive inequalities persist on a
neighborhood of that component.
\end{proposition}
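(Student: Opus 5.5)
The plan is the implicit function theorem applied to an explicit polynomial system at an exact algebraic point.

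\textbf{Setting up the system.} The unknowns are the six physical points \(p_0,\dots,p_5\) and the five shadow coordinates \(\xi_1,\dots,\xi_5\). The normal form \eqref{eq:one-silent-normal-form} fixes \(Q_0=(0,1)\) and the second shadow coordinate of \(Q_1,\dots,Q_5\). The natural affine gauge removes the physical affine freedom that preserves the equal-area and trace equations: translations, and the part of the area-preserving linear group we normalize (say \(p_1\) at the origin and \(p_2\) on a coordinate axis). It also removes the shadow translation \(\xi\mapsto\xi+c\), which already fixes one \(\xi_i\). The count should be \(12+5-3=14\) variables. The ten equations are the following.
\begin{itemize}[leftmargin=2em]
\item Four equal-area equations \(s_tD_t=s_{t'}D_{t'}\) among the five rows \((0,i,i+1)\), plus one normalization fixing the common modulus to \(1\). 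Equivalently, this is five equations \(s_tD_t=1\) after fixing the scale.
\item The five trace equations \(\operatorname{tr}C_t=0\) of \Cref{prop:shadow-transfer}. Multiplied by \(D_t\), these become polynomials, namely the first-order variation \(DD_t(P)[Q]=0\).
\end{itemize}
Before the rank computation I will check explicitly that the gauge choices are transversal to the orbit, so that the count \(14\) is honest.

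\textbf{Evaluating the Jacobian.} I will form the \(10\times14\) Jacobian symbolically. Its entries are polynomials of degree at most two in the coordinates, and I will evaluate them exactly at the \(\mathbb Q(\sqrt{1749})\) realization. To certify rank \(10\), I will exhibit a \(10\times10\) minor and compute it exactly as \(\alpha+\beta\sqrt{1749}\) with \(\alpha,\beta\in\mathbb Q\). Nonvanishing follows from \((\alpha,\beta)\ne(0,0)\), since \(\sqrt{1749}\) is irrational (\(1749=3\cdot11\cdot53\) is squarefree). A cheaper route is to use a high-precision float to choose the columns and then certify only that minor exactly. The equations vanish at the point by construction; I will re-verify this exactly.

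\textbf{Concluding.} With full row rank \(10\), the implicit function theorem (the regular value form) shows that the zero set near the realization is a real-analytic submanifold of dimension \(14-10=4\). The point is a smooth point of a local component of exactly that dimension. For persistence, the fifteen inactive old triangles have moduli at least the value in \eqref{eq:c5-slack}, which exceeds \(1\) strictly. By continuity of the determinants, the strict inequalities \(|D_t|>1\) hold on a neighbourhood. The following also persist on a possibly smaller neighbourhood:
\begin{itemize}[leftmargin=2em]
\item the orientation word;
\item the open conditions (C3), which the realization satisfies with nonzero shadow differences;
\item nonvanishing of the \(q_t\).
\end{itemize}

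\textbf{Main obstacle.} The hard step is the exact rank certificate, together with the correct gauge bookkeeping. A wrong gauge count would change the claimed dimension. Exact minor arithmetic in the quadratic field may also be heavy, so I would reduce the work by choosing the minor whose columns correspond to the variables the equations depend on most sparsely, such as the \(\xi_i\) columns for the trace rows.
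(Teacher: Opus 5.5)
Your proposal takes essentially the same route as the paper's proof: an exact nonzero \(10\times10\) minor of the Jacobian at the \(\mathbb Q(\sqrt{1749})\) point, the implicit function theorem for the four-dimensional local component, and continuity together with the slack \eqref{eq:c5-slack} for persistence of the strict inequalities. One practical caution about your minor-selection shortcut: writing \((X_i,Y_i)=p_i-p_0\), the trace rows read \(\xi_iY_{i+1}-\xi_{i+1}Y_i+X_{i+1}-X_i=0\) while the area rows do not involve \(\xi\), so the five \(\xi\)-columns of the Jacobian are linearly dependent (the infinitesimal shear \(\xi_i=Y_i\) lies in their kernel); hence any minor containing all five \(\xi\)-columns vanishes, and a nonzero minor must use at least six physical columns.
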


\begin{proof}
The exact Jacobian has a nonzero rational-algebraic \(10\times10\) minor.
The implicit function theorem gives the dimension statement, and
\eqref{eq:c5-slack} gives persistence of the inequalities.  The minor and all
fifteen exact slacks are reproduced in the supplement.
\end{proof}

Thus the local residual is not killed by a finite-order singularity.  What
fails, as proved next, is completion with only one additional
positive-support label.

\subsection{The one-external theorem}

\begin{lemma}[Exhaustive collapsed-fan reduction]
\label{lem:c5-fan-reduction}
Under {\rm(C1)--(C7)}, let \(G_e\) be the graph on the five core labels in
which \(ij\) is an edge when the row \((e,i,j)\) has positive multiplier.
Put
\[
 R_i=(\xi_i,Y_i),\qquad
 u_i=\xi_i-\xi_e,\qquad v_i=Y_i-Y_e,\qquad
 S=\{i:u_i=0\}.
\]
Then \(S\) is a nonempty independent set of the old five-cycle,
\(|S|\le2\), and \(G_e[S^c]\) has no edge.  Every remaining support has one
of the following forms.
\begin{enumerate}[label=(F\arabic*),leftmargin=2.4em]
\item There is one \(R\)-collapsed centre \(c\in S\), joined to all four
other core labels, with \(R_c=R_e\), and for one \(h>0\),
\[
 Y_j=Y_e+\varepsilon_jh,\qquad \varepsilon_j\in\{-1,1\}.
\]
\item \(S=\{c,d\}\) with \(c,d\) nonadjacent in the old cycle.  If the
support is not already of type {\rm(F1)}, both centres have outside
neighbours, are collapsed to \(R_e\), and the three outside labels have
heights \(Y_e\pm h\) for one common \(h>0\).
\end{enumerate}
\end{lemma}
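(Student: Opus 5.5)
The plan is to work entirely in the coordinates \(u_i=\xi_i-\xi_e\), \(v_i=Y_i-Y_e\), and to extract three kinds of scalar equations from the hypotheses.

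\textbf{Step 1: the support is a star at \(e\).} By (C2), every triangle on the old labels other than the five wedges is strict. By (C1), the wedges are unsupported. Hence every positively weighted row has the form \((e,i,j)\) with \(i,j\) core labels, and the actual relative-interior stress is \(B=\sum_{ij\in G_e}\lambda_{ij}B_{eij}\). Since \(\eta\) vanishes on all labels of these rows, \(q_t=0\) on the positive support automatically, so we are in the flat sector of \Cref{subsec:flat-mixed}. A direct expansion of \eqref{eq:elementary-stress} gives \((B_{eij}\xi)_i=s\,u_j\) and \((B_{eij}\xi)_j=-s\,u_i\). The shared-kernel condition \(B\xi=0\) from (C5) then becomes a signed weighted vertex equation
\[
 \sum_{j\sim i}\sigma_{ij}\lambda_{ij}u_j=0\qquad(i\in\{1,\dots,5\}),
\]
and \((BY)_i=2b_{ix}\) has the same form with \(v\) in place of \(u\).

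\textbf{Step 2: the trace equations.} The first-order trace equation (C5) on a positive row \((e,i,j)\), for the velocity \((\xi,0)\), reads
\[
 u_iv_j-u_jv_i=0 .
\]
That is, \(R_i-R_e\) and \(R_j-R_e\) are parallel for every edge of \(G_e\).

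\textbf{Step 3: the combinatorial claims about \(S\).} Independence of \(S\) in the old cycle is immediate from (C3), and \(|S|\le2\) follows because \(C_5\) has independence number two.

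For \(S\ne\emptyset\), I would argue by contradiction. If every \(u_i\ne0\), Step 2 lets us write \(v_j=\mu u_j\) with one slope \(\mu\) on each connected component of \(G_e\). The vertex equations then give \(BY=\mu B\xi=0\) on that component, so \(b_x=0\) there. This contradicts \Cref{thm:components}, since every positive component carries reactions \(\alpha\Delta>0\) on the vertical sides. Components of \(G_e\) on the core share the label \(e\) and hence form one positive component, so I need the slope to be uniform. I expect uniformity to follow from the load-orthogonality \(\xi^Tb_x=\xi^Tb_y=0\) of \Cref{cor:shadow-load}, combined with the partial moment identity \Cref{thm:partial-noether} applied to each sub-fan.

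The same slope mechanism should show that \(G_e[S^c]\) has no edge. Restricted to a connected piece of \(G_e[S^c]\), the vertex equations force a collinear relation that reproduces the vanishing load. Failing that, one uses \(\xi^T b=0\) together with \(\sum_i p_i b_i^T=\alpha\Delta I_2\) for the piece, which yields a rank-one moment tensor and hence a contradiction exactly as in \Cref{thm:internal-reaction}.

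\textbf{Step 4: the normal forms (F1) and (F2).} Every edge of \(G_e\) now meets \(S\). For an edge \(cj\) with \(c\in S\) and \(u_j\ne0\), Step 2 gives \(v_cu_j=0\), so \(v_c=0\) and \(R_c=R_e\). In particular \(p_c-p_e\) is horizontal. Then tightness of \((e,c,j)\) gives
\[
 \tfrac12|X_c-X_e|\,|Y_j-Y_e|=\Delta ,
\]
so every neighbour has \(Y_j=Y_e\pm h\) with \(h=2\Delta/|X_c-X_e|\). This \(h\) is common to all neighbours of one centre.

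If \(|S|=1\), nonsilence (C4) forces the centre to be joined to all four other labels, which is (F1). If \(S=\{c,d\}\), the centres are nonadjacent in the cycle. When both centres have outside neighbours, both collapse to \(R_e\), so \(X_c,X_d\) lie on the horizontal line \(Y=Y_e\) through \(p_e\). Nonsilence of the three outside labels, together with the vertex equations at those labels, should then force \(|X_c-X_e|=|X_d-X_e|\), giving one common \(h\) as stated. The alternative \(p_c\), \(p_d\), \(p_e\) collinear is excluded by positive area, so \(X_c-X_e=-(X_d-X_e)\). If one centre has no outside neighbour, the support is already of type (F1).

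\textbf{Main obstacle.} I expect the hardest part to be Step 3, specifically excluding edges of \(G_e[S^c]\) with arbitrary signs \(\sigma_{ij}\) and weights. My first attempt there would be to feed the vertex equations into the partial moment identity for the sub-fan on that component and derive a rank-one moment tensor contradiction.
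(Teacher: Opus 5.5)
\textbf{Gap 1: the central claim is not proved.} The lemma's main assertion is that $G_e[S^c]$ has no edge, and $S\neq\emptyset$ follows from it via nonsilence. You say only that the slope mechanism ``should'' give this, and you flag it as the main obstacle. The missing step is a decoupling that your own vertex equations already contain.

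Let $H$ be a connected component of $G_e[S^c]$ that contains an edge, and take $i\in H$.
\begin{itemize}
\item Every row $(e,i,s)$ with $s\in S$ contributes $\pm\lambda_{is}u_s=0$ to $(B\xi)_i$, and every other neighbour of $i$ lies in $H$. So the sub-fan stress satisfies $(B_H\xi)_i=0$ on $H$. It also vanishes at $e$ by the zero row sums, so $B_H\xi=0$.
\item Since $u_i\neq0$ on $H$, the trace equations give a single slope: $Y-Y_e=\mu(\xi-\xi_e)$ on $H\cup\{e\}$.
\item Hence $B_HY=\mu B_H\xi=0$, so $b^H_x=0$ and the first column of $\sum_ip_i(b_i^H)^T$ vanishes. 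This contradicts $m_H\Delta I_2$ with $m_H>0$ from \Cref{thm:partial-noether}. The paper writes the same contradiction as $0=\sum_i\xi_i(b_i^H)^T=a(0,m_H\Delta)$.
\end{itemize}
Signs and weights play no role. The case $S=\emptyset$ is just the case where $H$ is a component of $G_e$. So you do not need a uniform slope across components: each sub-fan kills $\xi$ on its own. Neither the full-load orthogonality $\xi^Tb=0$ of \Cref{cor:shadow-load} nor the two-terminal argument of \Cref{thm:internal-reaction} can replace $B_H\xi=0$.

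\textbf{Gap 2: the (F2) step fails as written.} At an outside label, all $G_e$-neighbours lie in $S$. Its $\xi$-vertex equation therefore reads $0=0$ and cannot force $|X_c-X_e|=|X_d-X_e|$. What works is the vertex equation at a centre:
\begin{itemize}
\item The edge $cd$ contributes $\pm\lambda u_d=0$, so a single outside neighbour $j$ would leave $\pm\lambda_{cj}u_j\neq0$.
\item Hence a centre with outside neighbours has at least two of the three outside labels, and the two centres share a neighbour $j$.
\item Tightness of $(e,c,j)$ and $(e,d,j)$, with $Y_c=Y_d=Y_e$, then gives the common $h$.
\end{itemize}

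Your closing sentence is self-contradictory. Once $R_c=R_d=R_e$, the points $p_c,p_d,p_e$ all lie on $y=Y_e$, so they are collinear whatever the sign of $X_c-X_e$; indeed $X_c-X_e=-(X_d-X_e)$ makes $p_e$ their midpoint. Read correctly, this observation shows the subcase is incompatible with $\Delta>0$. That would have been a valid, vacuous way to close (F2), but it is not the argument you gave.
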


\begin{proof}
Strict inducedness excludes positive rows using only old nonsilent labels,
and silence excludes positive rows containing \(0\).  Thus every positive row
in the relevant component has the form \((e,i,j)\), and nonsilence says that
\(G_e\) has no isolated vertex.  The trace equation for that row is
\begin{equation}
 \det(R_i-R_e,R_j-R_e)=0.             \label{eq:c5-R-collinearity}
\end{equation}

Let \(H\) be a connected component of \(G_e[S^c]\) containing an edge, and
let \(B_H\) be the partial stress formed by its internal edges.  At
\(i\in H\), a row joining \(i\) to \(s\in S\) contributes a signed positive
multiple of \(u_s=0\) to \((B\xi)_i\).  Every other contribution is internal
to \(H\).  Hence the full equation \(B\xi=0\) gives
\((B_H\xi)_i=0\) on \(H\); row-sum zero also kills the only remaining
coordinate, at \(e\).  Therefore
\begin{equation}
 B_H\xi=0.                            \label{eq:c5-partial-kernel}
\end{equation}

All vectors \(R_i-R_e\), \(i\in H\), are nonzero.  Equation
\eqref{eq:c5-R-collinearity} propagates one line through \(R_e\) across the
connected graph \(H\).  Write this line as
\(\alpha\xi+\beta Y+\gamma=0\).  If \(\alpha=0\), then \(Y\) is constant on
the partial support.  Force balance makes
\(\sum_iY_i(b_i^H)^T=0\), contradicting the second row
\((0,m_H\Delta)\) of the partial affine moment.  If \(\alpha\ne0\), write
\(\xi=aY+c\).  From \eqref{eq:c5-partial-kernel}, skew symmetry, force
balance, and the same moment identity,
\[
 0=\sum_i\xi_i(b_i^H)^T
  =a(0,m_H\Delta).
\]
Thus \(a=0\), so \(\xi_i=\xi_e\) on \(H\), contrary to \(H\subset S^c\).
It follows that \(G_e[S^c]\) has no edge.

If two adjacent old-cycle labels lay in \(S\), their shadow difference would
vanish, contradicting {\rm(C3)}.  Hence \(S\) is independent and
\(|S|\le2\).  It is nonempty because \(G_e[S^c]\) has no edge while \(G_e\)
has no isolated vertex.

If \(S=\{c\}\), absence of edges outside \(S\) and coverage force \(c\) to be
joined to all four other labels.  For any neighbour \(j\),
\[
 0=\det((0,v_c),(u_j,v_j))=-v_cu_j.
\]
Since \(u_j\ne0\), \(v_c=0\), so \(R_c=R_e\).  The positive rows
\((e,c,j)\) all have signed double-area modulus \(D=2\Delta\); hence
\[
 |X_c-X_e|\,|Y_j-Y_e|=D.
\]
The first factor is fixed and nonzero, giving the common height modulus \(h\)
in {\rm(F1)}.

Now let \(S=\{c,d\}\).  The two centres are nonadjacent in the old cycle.  At
a centre, one outside neighbour cannot satisfy its scalar kernel equation:
the edge \(cd\), if present, contributes zero because \(u_c=u_d=0\), while a
single outside edge contributes a nonzero multiplier times a nonzero \(u_j\).
Thus a centre with outside neighbours has at least two.  If both centres have
outside neighbours, their neighbour sets among three outside labels overlap.
Equation \eqref{eq:c5-R-collinearity} collapses both centres to \(R_e\), and
a common neighbour equates their two height moduli.  Coverage gives {\rm(F2)}.
If only one centre has outside neighbours, it is joined to all three outside
labels, while the other centre is covered by \(cd\).  The equal-area equation
for \((e,c,d)\) gives the same height modulus, so this is already an
{\rm(F1)} fan with four leaves.  The cases are exhaustive.
\end{proof}

\begin{theorem}[All-sign one-external exclusion]
\label{thm:c5-one-external}
Under conditions {\rm(C1)--(C7)}, no positive KKT completion using only the
single external label \(e\) exists, for any of the \(32\) orientation words
of the five old rows.
\end{theorem}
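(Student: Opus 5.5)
The plan is to start from \Cref{lem:c5-fan-reduction}, which reduces any hypothetical completion to the two fan types (F1) and (F2). Everything then turns into a finite list of algebraic systems.

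\textbf{Fixing the combinatorics.} For each support type, each choice of centre(s) $c$ (or $c,d$), and each sign vector $\varepsilon_j$ of the heights $Y_j=Y_e\pm h$, I would write down the full set of equations.
\begin{itemize}
\item The five old tight rows $(0,i,i+1)$ have signed double area $s_i\cdot 2\Delta$, where $s_i$ is given by the orientation word.
\item The positive rows $(e,c,j)$ have modulus $2\Delta$.
\item The five trace equations hold on the old rows. With $Q_0=(0,1)$ and $Q_i=(\xi_i,0)$, each is linear in the $\xi$'s, with coefficients given by edge vectors of $p$.
\item The collapse $R_c=R_e$ holds, i.e.\ $\xi_c=\xi_e$ and $Y_c=Y_e$.
\end{itemize}

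\textbf{Analytic reduction before computing.} I would use the gauge freedom to set $p_e$ at the origin and to normalize $h$ and $\Delta$. The fan then makes every leaf lie on one of the two horizontal lines $Y=\pm h$. The centre lies on the line $Y=0$, at horizontal offset $X_c-X_e=\pm D/h$.

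Next I would exploit the stress equations. The KKT equilibrium restricted to the fan (C7) is $B\xi=0$ and $Bz=2ib$. In the fan, $B$ is a star at $c$ plus edges through $e$. The row at a leaf $j$ reads
\[
\lambda_{ecj}\, s\,(\xi_e-\xi_c)=0\quad\text{up to terms in }u_j,
\]
so it forces linear relations among the multipliers and the $u_j$.

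The partial affine moment identity $\sum p_i b_i^T=m\Delta I_2$ (\Cref{thm:partial-noether}) is then applied to the fan component. The only nonzero reactions are at the square contacts, and at most two points lie per side. This pins the contact pattern, as in \Cref{thm:components}: the component must meet all four sides. With leaves on only two horizontal lines and one centre height, at most seven labels are available, so the feasible boundary assignments form a short list. I expect each (F2) case to reduce to an (F1)-like configuration via the common-height argument, leaving essentially a four- or five-leaf fan.

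\textbf{Exact enumeration.} For every orientation word (32 of them), every centre position up to the dihedral symmetry of $C_5$, every height sign pattern, and every boundary-contact assignment, I would solve the resulting polynomial system exactly, using Gröbner bases or resultants over $\mathbb Q$. I would then check the outcome in one of three ways:
\begin{itemize}
\item the system is inconsistent;
\item every real solution violates positivity of some multiplier, or some inactive inequality $|D_t|>2\Delta$, including the triangles involving $e$ and the old strict rows (C2);
\item every real solution violates (C3), i.e.\ $\xi_{i+1}\neq\xi_i$.
\end{itemize}
Singular charts, where a Jacobian minor vanishes or an elimination divides by zero, would be handled by splitting into the vanishing and nonvanishing branches explicitly.

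\textbf{Main obstacle.} I expect the main obstacle to be organizing this enumeration so that it is provably exhaustive, particularly the degenerate branches and the inactive-inequality checks, which are semialgebraic rather than algebraic. To control them, I would first try to show analytically that the trace equations together with $R_c=R_e$ force the silent point $p_0$ into a position incompatible with the strict inducedness (C2) in most sign patterns. The truly residual cases would then be settled by exact real-root isolation, recorded in the supplement.
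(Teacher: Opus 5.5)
This is essentially the paper's argument: the collapsed-fan lemma reduces every support to (F1)/(F2). The paper, like you, then settles the finitely many area--trace systems by exact elimination, solving denominator-zero charts separately and testing every real candidate against the inactive inequalities. The paper quotients the \(32\) words by \(D_5\) and global reversal to four representatives; fixing the centre by rotation and running all \(32\) words, as you propose, is equally exhaustive.

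Two small corrections. First, (F2) is excluded outright, since \(R_c=R_d=R_e\) puts \(p_c,p_d,p_e\) on the line \(Y=Y_e\) and so forces a zero-area triangle. Second, if you gauge-fix by a translation and independent axis scalings, the square becomes an axis-parallel rectangle of unknown size. Impose the boundary-contact and moment conditions on that rectangle, not on the unit square.
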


\begin{proof}
We separate the conceptual reduction from the finite exact verification.
By \Cref{lem:c5-fan-reduction}, the two displayed fan families exhaust all
supports before any orientation sign is used.

It remains to solve the area and trace equations for those types.  The action
of the dihedral group \(D_5\), together with simultaneous reversal of all
orientations, has four word orbits, represented by
\[
 +++++,\qquad ++++-,\qquad +++--,\qquad ++-+-.
\]
For these representatives the exact elimination checks \(480\) generic fan
patterns.  Denominator-zero cases are not discarded: \(1360\) singular charts
are solved in ungauged coordinates by exact rank and polynomial reduction.
An independent exceptional-rank implementation checks a further \(1440\)
cases.  No chart yields a feasible survivor; intermediate real candidates
are tested exactly against the required inactive inequalities.  The reduction
and verification architecture are described in \Cref{app:computation}.
\end{proof}

\begin{definition}
The external completion depth \(d_{\rm ext}(C_5)\) is the minimum number of
labels in a finite set \(E\), disjoint from \(\{0,\ldots,5\}\), used by a
positive component that covers all five nonsilent core labels and satisfies
{\rm(C1)--(C5)} and {\rm(C7)}, with {\rm(C6)} replaced by: the component has a
relative-interior positive multiplier, uses exactly the external label set
\(E\), and satisfies \(Q_e=(\xi_e,0)\) for every \(e\in E\).  If no such
completion exists, set \(d_{\rm ext}(C_5)=+\infty\).
\end{definition}

\begin{corollary}
\label{cor:external-depth}
In the literal class above,
\[
 d_{\rm ext}(C_5)\ge2.
\]
\end{corollary}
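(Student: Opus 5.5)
The plan is to show that \(d_{\rm ext}(C_5)\) is not \(0\) and not \(1\). By definition this gives \(d_{\rm ext}(C_5)\ge2\), with the value \(+\infty\) allowed. The case \(|E|=1\) is exactly the situation of \Cref{thm:c5-one-external}. Once the definition is unpacked with \(E=\{e\}\), the modified condition (C6) is literally (C6). The component has a relative-interior positive multiplier, uses exactly the external label \(e\), and satisfies \(Q_e=(\xi_e,0)\). Conditions (C1)--(C5) and (C7) are assumed unchanged. The theorem excludes every such completion for all \(32\) orientation words, so no completion with \(|E|=1\) exists.

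For \(|E|=0\), I will use the first step of the proof of \Cref{lem:c5-fan-reduction}, which does not depend on the external label. Strict inducedness (C2) says that every triangle on the old labels other than the five rows of (C1) is strict, so it cannot carry a positive multiplier. The five rows of (C1) contain the silent label \(0\) and are unsupported. Hence a positive component using no external label would have no positive triangle at all. It then could not cover the five core labels, and it could not even be a positive component in the sense of \Cref{thm:components}.

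The nonsilence condition (C4) gives the same conclusion directly. Each core label must lie in some positive triangle, and no positive triangle is available. In either form, this rules out \(|E|=0\).

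Combining the two cases, any admissible completion uses at least two external labels, which proves \(d_{\rm ext}(C_5)\ge2\). The only point needing care is bookkeeping: I must confirm that the definition's replacement of (C6) agrees exactly with the hypotheses of \Cref{thm:c5-one-external} when \(|E|=1\). All the substantive work is carried by that theorem.
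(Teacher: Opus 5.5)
Your proposal is correct and is the argument the paper intends. The case \(|E|=1\) is exactly \Cref{thm:c5-one-external} once the modified (C6) is specialized to \(E=\{e\}\). The case \(E=\emptyset\) is excluded by the observation that opens the proof of \Cref{lem:c5-fan-reduction}: strictness (C2) and complementarity leave no positive row on the old labels. The paper gives the corollary no written proof, and your treatment of the empty case supplies the one step it leaves implicit.
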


The corollary does not say that two external labels are impossible.  It also
does not show that every one-silent residual contains a five-cycle, that a
non-induced five-cycle reduces to this class, or that the class is stable
under selecting a two-plane from a higher-dimensional residual.

\subsection{Same-label endpoints}

The exact local realization provides another useful finite statement.  There
are fifteen old strict triangle rows, and either orientation could become
tight at a same-six-label endpoint.

\begin{proposition}[No same-six endpoint]
\label{prop:same-six}
For the exact strict realization with word \(++-+-\), none of the \(30\)
signed choices of a new old-label trace-closed row gives a feasible endpoint.
\end{proposition}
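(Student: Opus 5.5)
The plan is to turn the statement into a finite, exact verification at the fixed strict realization. The realization is explicit: six points over \(\mathbb Q(\sqrt{1749})\), word \(++-+-\), shadow coordinates in the normal form \eqref{eq:one-silent-normal-form}. The fifteen old strict rows, each taken with either orientation, give exactly \(30\) candidate signed rows. A same-six endpoint means the following. Along the smooth four-dimensional component of \Cref{prop:c5-survivor}, we move until one additional old row \(t=(i,j,k)\) reaches the level. This happens with the sign that its current orientation forces as it approaches from the strict side. The new row must also be trace-closed, meaning \(\operatorname{tr}C_t=0\) for the same invisible shadow, so that the residual survives. Finally, all other inactive inequalities must remain valid. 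For each of the \(30\) signed choices, I adjoin two equations to the ten defining equations (five equal-area, five trace): the new equal-area equation \(s_tD_t=2\Delta\) and the new trace equation.

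First, I use the orientation cell to discard half the cases. A row whose current sign is \(s\) cannot become tight with sign \(-s\) without first passing through zero area, as noted in \Cref{sec:problem}. Within the connected strict region this leaves at most \(15\) genuine choices. I would still carry all \(30\) through the exact elimination as a consistency check, because the statement counts signed choices.

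Second, for each remaining row, I compute on the local chart. In the natural affine gauge, the augmented system has \(12\) equations in \(14\) variables. I eliminate exactly, using Gröbner bases or resultants over \(\mathbb Q\) after adjoining \(\sqrt{1749}\) as a variable with its minimal polynomial. This produces the real solution set, which is either empty or a candidate variety containing endpoint configurations.

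Third, I test every real candidate against the strict inequalities. These are the remaining fourteen old slacks, the condition \(\Delta>0\), the square-containment constraints, and the nondegeneracy conditions (C3) and (C4). The test must be exact, for instance by sign determination with real root isolation or cylindrical decomposition on the low-dimensional candidate sets. It must also account for the connectedness of the candidate to the original realization inside the strict region. This is what it means for the candidate to be an endpoint of the given survivor, rather than an unrelated solution.

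I expect the main obstacle to be this third step, which is an exact semialgebraic feasibility question on a positive-dimensional set, not a simple check of whether an ideal is trivial. My first attempt would be to show that the augmented ideal is already inconsistent over \(\mathbb R\) for most rows; trace-closure plus equal area often forces a collinear shadow contradicting (C3). For the remaining rows, I would parametrize the candidate set and show that some old slack, specifically the one realizing \eqref{eq:c5-slack}, or a square constraint, fails before the new row becomes tight. As in \Cref{thm:c5-one-external}, singular charts where gauge denominators vanish must be solved in ungauged coordinates rather than discarded. The full computation is recorded in the supplement.
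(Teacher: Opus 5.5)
Your plan is the paper's proof. For each of the 30 signed old-label rows you adjoin the new equal-area and trace equations, eliminate exactly, and test any real solutions against the inactive inequalities; the paper reports the unit ideal in 25 cases and only nonreal roots in 2, while in the remaining 3 every real solution forces an inactive triangle down to $1/20$ of the normalized area level or creates a collinear triple. Those refutations hold on the whole solution set of the augmented system, so the paper needs none of your extras: no orientation pruning, no connectivity or path analysis back to the survivor, and no square-containment constraints (which play no role in this affine-gauged determinant-and-trace model).
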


\begin{proof}
Exact elimination gives the unit ideal in \(25\) cases.  Two cases have only
nonreal roots.  The remaining three have real algebraic solutions, but each
violates an inactive inequality: two force another triangle to have area
\(1/20\) of the normalized double-area scale, and one creates a collinear
triple.  The case table is included in the supplement.
\end{proof}

Together, \Cref{prop:c5-survivor,prop:same-six,thm:c5-one-external} give a
clean local-to-global picture: the six-label germ is genuinely smooth, it
cannot enlarge using the same labels, and one new positive-support label is
still insufficient.

\section{Limits, counterexamples, and finite checks}
\label{sec:limits}

The structural statements above depend on three features that are easy to
conflate: positive support rather than mere tightness, the whole KKT face
rather than one multiplier, and determinant geometry rather than generic
smooth optimization.  We record exact examples that separate these features.
They are not pathological exceptions to the theory; they mark its logical
boundary.

\subsection{A tight row need not be supported}

\begin{example}[A generic selector counterexample]
\label{ex:selector-countermodel}
On
\(
 D=[-1,1]\times[-\tfrac12,\tfrac12]
\)
consider the max--min problem for
\[
 f_1=x,\qquad f_2=-x,\qquad
 f_3=y+y^2,\qquad f_4=-y^4.
\]
Its unique maximizer is \((0,0)\), where all four functions vanish.  The
normalized KKT face is
\[
 \lambda_1=\lambda_2,\qquad \lambda_3=0,
 \qquad \lambda_4=1-2\lambda_1,
 \qquad 0\le\lambda_1\le\tfrac12.
\]
Thus the third row is tight at every optimum but unsupported throughout the
whole multiplier face.  In the vertical direction its second derivative is
\(2\), while every supported row has second derivative zero.
\end{example}

\begin{proof}
For every \((x,y)\in D\),
\(
 \min(f_1,f_2)=-|x|\le0
\)
and \(f_4=-y^4\le0\).  Equality of the minimum to zero forces \(x=y=0\),
proving uniqueness.  The displayed face follows by differentiating at the
origin and imposing nonnegativity and normalization.  The Hessian statement
is immediate.
\end{proof}

This is a two-variable smooth optimization example, not a realization by
triangle determinants in a square.  It shows why no selection argument based
only on compactness, uniqueness, or relative-interior multipliers can promote
all tight rows into positive support.

\subsection{Whole-face cancellation need not be rowwise}

\begin{example}[A midpoint flux counterexample]
\label{ex:midpoint-flux}
Let
\[
 p_6=(0,\tfrac12),\quad p_7=(1,\tfrac12),\quad
 p_1=(\tfrac12,1),\quad p_2=(\tfrac12,0),
 \qquad \xi=(0,0,1,1),
\]
where the entries of \(\xi\) are ordered as \((6,7,1,2)\).  The four
oriented rows
\[
 (6,7,1),\qquad (6,2,7),\qquad
 (6,2,1),\qquad (7,1,2)
\]
have ordinary area \(1/4\).  The sum of the first pair and the sum of the
second pair generate two positive KKT rays with the same square-normal load;
both annihilate \(\xi\).  After normalization their convex hull is a
one-dimensional multiplier face.  Yet the first two single-row blocks have
opposite nonzero terminal fluxes, \(-1\) and \(+1\).
\end{example}

The one-point pinned ranks are \((3,3,3,3)\), and no triangle inequality is
inactive at this four-point level.  The rowwise shadow traces are
\((0,0,1,-1)\).  Hence this example does not meet the literal five-cycle
hypotheses of \Cref{thm:c5-one-external}; it demonstrates instead that a
whole-face equation such as \(B\xi=0\) does not force the corresponding
partial fluxes, or these rowwise trace quantities, to vanish separately.  The
coordinates, loads, ranks, and two extreme multipliers are checked exactly in
the supplement.

\subsection{Why finite-order lifting is insufficient}

The determinant of a moving triangle is quadratic in the motion parameter.
Consequently, first- and second-order equalities completely control one
fixed affine line, but they do not choose a globally feasible direction from
a multidimensional residual.  Nor does a formal solution of a finite jet
system ensure compatibility with inactive triangle inequalities, square
contacts, or a positive KKT multiplier.  The strict local survivor in
\Cref{prop:c5-survivor} makes this distinction concrete: its area--trace
germ is smooth and has positive-dimensional freedom, while
\Cref{thm:c5-one-external} rules out a specified global completion.

Likewise, a graph or oriented-matroid description retains incidences and
signs but discards the metric moment tensor
\(
 \sum_i p_i b_i^T=\Delta I_2
\)
and the local transfer determinants.  Such combinatorial data are useful for
organizing cases, but they cannot by themselves certify the analytic
reaction and flux constraints used here.

\subsection{Regression on the exact small configurations}

As a consistency check, the shared-kernel and hybrid constructions were
evaluated on archived exact records for the known solved square cases.  The
calculation is not used in any general proof.

\begin{table}[t]
\centering
\caption{Exact regression for the stored small-\(n\) records.  Equality in the two middle
columns refers to the stored extremal record, not to every representative or
to uniqueness of the optimum.}
\label{tab:small-regression}
\begin{tabular}{@{}c@{\qquad}c@{\qquad}c@{\qquad}c@{}}
\toprule
\(n\) & tight support saturated & silent labels & terminal residual \(W\)\\
\midrule
3 & yes & 0 & 0\\
4 & yes & 0 & 0\\
5 & yes & 0 & 0\\
6 & yes & 0 & 0\\
7 & yes & 0 & 0\\
8 & yes & 0 & 0\\
9 & yes & 0 & 0\\
\bottomrule
\end{tabular}
\end{table}

For \(n=6\), the recorded restricted determinant is \(-2/3\).  For
\(n=8\), each individual recorded stress has kernel dimension at least two,
whereas the intersection over the recorded KKT face has dimension one.  The
latter is a small but instructive instance of why the shared kernel, rather
than the kernel of a favored multiplier, is the intrinsic object.  The
machine-readable input is archived and checked by the exact verifier using
rational arithmetic.  For \(n=9\), this checks compatibility with the exact
candidate;
it does not replace the numerical \(\varepsilon\)-global certificate used in
the current literature record
\cite{MonjiModirKocuk2025,SudermannMerx2026}.  No row of
\Cref{tab:small-regression} implies a theorem for \(n\ge10\), and no
uniqueness claim is made.

\subsection{Logical consequences}

The examples enforce the following separations.
\begin{enumerate}[label=(\roman*),leftmargin=2em]
\item Tightness does not imply positive KKT support.
\item Shared-kernel cancellation does not imply cancellation for each
substress.
\item Weighted second-order neutrality does not imply rowwise flatness.
\item A smooth local determinant germ need not admit the required positive
global completion.
\item Finite small-\(n\) regression is evidence for consistency, not an
induction principle.
\end{enumerate}

These distinctions are used explicitly in the residual map below.

\section{Residual map and open problems}
\label{sec:open}

The preceding theory reduces first-order nonobservability to a concrete
finite-dimensional object
\[
 W(P^\Phi)=
 \frac{(\Ksh\oplus\Ksh)\cap\ker J_{\mathrm{tight}}^{\rm tr}}
      {\trans},
\]
chosen at a fully pinned global optimizer.  It is rank-one-free and therefore
has graph form.  This reduction is valid for every \(n\), but the existence
of a representative with \(W=0\) is not proved.

\begin{conjecture}[Existential hybrid observability]
\label{conj:hybrid-observability}
For every \(n\ge3\), the optimal level set contains a configuration \(P\)
for which
\[
 (\Ksh(P)\oplus\Ksh(P))\cap
 \ker J_{\mathrm{tight}}^{\rm tr}(P)=\trans.
\]
Equivalently, the canonical hybrid operator is positive definite on the
translation quotient at some global optimizer.
\end{conjecture}

The known reductions organize a possible counterexample by
\[
 r=\dim(\Ksh/\langle\one\rangle),
 \qquad m=\dim W.
\]
Rank-one exclusion gives
\(
 m\le2\lfloor r/2\rfloor
\).
The cases \(r\le1\) close immediately, and the literal case \(r=m=2\) is
excluded by \Cref{thm:division-residual}.  Three genuine sectors remain.

\begin{description}[leftmargin=3.5cm,style=nextline]
\item[The one-dimensional residual.]
When \(r=2,m=1\), the positive support can be flat, with
\(q_t=0\) row by row, or mixed, with nonzero coefficients of both signs.
Only the weighted equality \(\sum_t\lambda_tq_t=0\) is automatic.  In the
flat one-silent induced-five-cycle subcase, one external completion is
impossible, so any completion has depth at least two.  The open flat cases
also include covered blocks with no silent label, arbitrary noncyclic
one-silent cores and their external depth, non-induced cycles, longer
circuits, and two-silent configurations.  The mixed sector remains open as
well; this list records the principal branches rather than an exhaustive
classification of core hypergraphs.

\item[Maximal higher residuals.]
If \(0<m=r\), then \Cref{thm:maximal-silent} supplies at least \(3-c\)
stress-silent labels.  This is a lower bound, not a classification: there is
no corresponding upper bound, and the literal two-atom argument does not
extend automatically.

\item[Nonmaximal higher residuals.]
For \(1\le m<r\), the graph domain need not be invariant under its operator.
A two-dimensional slice may have useful trace and determinant identities,
but it need not contain a silent-vertex indicator or inherit the literal atom
decomposition.  This is the least rigid present branch.
\end{description}

The separator formalism suggests three focused questions.

\begin{question}
Can square-boundary moments force a definite determinant sign on every
one-dimensional mixed residual, or otherwise rule out weighted cancellation
between positive rows?
\end{question}

\begin{question}
Can one select an invariant division plane inside a higher-dimensional
rank-one-free residual while retaining enough boundary information to bound
the number of silent labels from above?
\end{question}

\begin{question}
Can the two-terminal flux law be iterated without assuming internally
vertex-disjoint isolated blocks, perhaps through an additive or submodular
boundary budget?
\end{question}

These questions are deliberately narrower than the original extremal
problem.  Their common theme is to convert a weighted global cancellation
into local geometric information without assuming strict complementarity.
The same stress and affine-moment mechanism is available in other max--min
determinant problems; higher-dimensional simplex volumes should have an
analogous exterior-algebra formulation, although no such theory is developed
here.

\subsection*{Conclusion}

Optimal Heilbronn configurations carry more structure than generic KKT
points.  Their determinant multipliers form skew stresses with isotropic
affine moments; positive components are constrained by all four sides of the
square; nonunique multipliers lead naturally to a shared kernel and a
canonical hybrid observability operator; and two-terminal pieces transmit an
explicit flux, represented by Pfaffian cofactors in the five-internal-vertex
rank-four case.  A terminal selection turns the remaining
degeneracy into a rank-one-free matrix space, and exact computation excludes
one minimal global completion mechanism.  What remains is sharply stated by
\Cref{conj:hybrid-observability}.  The full Heilbronn triangle problem for
arbitrary \(n\) remains open.

\section*{Data and code availability}

The exact symbolic inputs, verification programs, expected outputs, declared
scope conditions, and checksum manifest used for the finite claims are
included with the supplementary material.  Verification uses Python 3.10 or
later and SymPy 1.14.0, runs without network access after installation of the
declared dependency, and stops at the first failed or malformed certificate.
The supplementary archive supplied with this manuscript is the presently
available release; a permanent repository identifier may be added to the
version of record.

\appendix
\section{Normalization, factors, and signs}
\label{app:factors}

This appendix fixes the conventions used throughout and gives short
coordinate checks for the identities most sensitive to a factor of two or a
sign.

Let \(t=(i,j,k)\), with \(i<j<k\), and put
\[
 a_t=e_j-e_i,\qquad c_t=e_k-e_i,\qquad
 B_t=s_t(a_tc_t^T-c_ta_t^T),
\]
where
\(
 s_t=\operatorname{sgn}\det(p_j-p_i,p_k-p_i)
\).
Then \(B_t^T=-B_t\), \(B_t\one=0\), and
\begin{equation}
 X^TB_tY
 =s_t\det(p_j-p_i,p_k-p_i)=2A_t.     \label{eq:app-area}
\end{equation}
Ordinary area, not doubled area, is used everywhere.

For \(B=\sum_t\lambda_tB_t\), direct differentiation gives
\[
 \frac{\partial}{\partial X}\sum_t\lambda_tA_t
 =\frac12BY,
 \qquad
 \frac{\partial}{\partial Y}\sum_t\lambda_tA_t
 =-\frac12BX.
\]
With outward square load
\[
 b_i=(r_i-\ell_i)+\mathrm i(u_i-d_i),
\]
position stationarity therefore reads
\[
 BY=2\operatorname{Re}b,
 \qquad -BX=2\operatorname{Im}b,
 \qquad Bz=2\mathrm i b.             \label{eq:app-Bz}
\]

For a tight subfamily \(\mathcal H\) carrying the weights inherited from
one actual multiplier, define
\(
 B_{\mathcal H}z=2\mathrm i b^{\mathcal H}
\)
and
\(
 m_{\mathcal H}=\sum_{t\in\mathcal H}\lambda_t
\).
Skew symmetry and \eqref{eq:app-area} give
\begin{align}
 \sum_i p_i(b_i^{\mathcal H})^T
 &=
 \begin{pmatrix}
 \tfrac12X^TB_{\mathcal H}Y&-\tfrac12X^TB_{\mathcal H}X\\
 \tfrac12Y^TB_{\mathcal H}Y&-\tfrac12Y^TB_{\mathcal H}X
 \end{pmatrix} \notag\\
 &=m_{\mathcal H}\Delta I_2.        \label{eq:app-partial-moment}
\end{align}
The vector \(b^{\mathcal H}\) is an induced algebraic load.  It agrees with
a portion of the physical square reaction only under the isolation
hypotheses stated in \Cref{sec:separator}.

For a shadow pair \(Q_i=(\xi_i,\eta_i)\),
\[
 \xi^TB_t\eta
 =s_t\det(Q_j-Q_i,Q_k-Q_i).
\]
If \(t\) is tight and
\(
 C_t=E_t^Q(E_t^P)^{-1}
\), then
\begin{equation}
 \xi^TB_t\eta=2\Delta\det C_t.       \label{eq:app-shadow-factor}
\end{equation}
The sign in the two-terminal law follows from
\[
 \sum_i\xi_i b_{i,x}^{\mathcal H}
 =\tfrac12\xi^TB_{\mathcal H}Y
 =-\tfrac12(B_{\mathcal H}\xi)^TY,
\]
and
\[
 \sum_i\xi_i b_{i,y}^{\mathcal H}
 =-\tfrac12\xi^TB_{\mathcal H}X
 =\tfrac12(B_{\mathcal H}\xi)^TX.
\]
Thus, if
\(
 B_{\mathcal H}\xi=\rho(e_a-e_b)
\),
\begin{equation}
 \sum_i\xi_i(b_i^{\mathcal H})^T
 =\frac{\rho}{2}(Y_b-Y_a,\,X_a-X_b). \label{eq:app-terminal-sign}
\end{equation}

\begin{table}[t]
\centering
\caption{Convention audit.}
\label{tab:factor-audit}
\begin{tabularx}{\textwidth}{@{}lXX@{}}
\toprule
Object & Convention & Consequence\\
\midrule
Triangle area & \(A_t=|D_t|/2\) & \(X^TB_tY=2A_t\)\\
Square reaction & outward normal & \(Bz=2\mathrm i b\)\\
Partial load & \(B_{\mathcal H}z=2\mathrm i b^{\mathcal H}\) &
moment \(m_{\mathcal H}\Delta I_2\)\\
Shadow coefficient & Hessian quadratic value &
\(q_t=\xi^TB_t\eta=2\Delta\det C_t\)\\
Terminal residual & \(\rho(e_a-e_b)\) &
right side of \eqref{eq:app-terminal-sign}\\
\bottomrule
\end{tabularx}
\end{table}

\section{Rank-one-free planes and the unique-apex step}
\label{app:rank-one-free}

We expand the only calculation suppressed in the proof of
\Cref{thm:division-residual}.  In the literal case
\(
 \dim H_0=\dim W=2
\), the support-collapse argument gives one positive component \(C\) and
two interior silent labels \(a,b\).  After subtracting translations, every
residual velocity vanishes on \(C\), and evaluation at either silent label is
an isomorphism.  Hence there is a matrix \(T\in GL_2(\R)\) such that
\begin{equation}
 v_a=u,\qquad v_b=Tu,\qquad v_j=0\quad(j\in C),
 \qquad u\in\R^2.                    \label{eq:app-two-silent}
\end{equation}
The rank-one-free property says that \(T\) has no real eigenvector.  In
particular, \(T-I\) is invertible.

Let
\(
 \Omega=\left(\begin{smallmatrix}0&-1\\1&0\end{smallmatrix}\right)
\), so that \(\det(x,y)=-x^T\Omega y\).  For \(j\in C\), differentiate the
oriented double area
\(
 D(a,b,j)=\det(p_b-p_a,p_j-p_a)
\)
along \eqref{eq:app-two-silent}.  Since \(v_j=0\), the derivative is
\begin{align*}
 0
 &=\det((T-I)u,p_j-p_a)
   +\det(p_b-p_a,-u)\\
 &=-u^T\bigl((T-I)^T\Omega(p_j-p_a)
              +\Omega(p_b-p_a)\bigr).
\end{align*}
First-order invisibility holds for every \(u\), hence
\begin{equation}
 (T-I)^T\Omega(p_j-p_a)+\Omega(p_b-p_a)=0.
                                                        \label{eq:app-apex}
\end{equation}
If both \((a,b,j)\) and \((a,b,j')\) are tight, subtracting their instances
of \eqref{eq:app-apex} gives
\[
 (T-I)^T\Omega(p_j-p_{j'})=0.
\]
Both factors on the left are invertible, so \(p_j=p_{j'}\), which is
impossible when the minimum triangle area is positive.  The atom-type
elimination in the proof of \Cref{thm:division-residual} has already shown
that every tight triangle incident with \(a\) has type \(abC\).  Therefore at
most one tight triangle is incident with \(a\).  With no boundary normal at
this interior point, the active normals of its three-variable one-point
linear program have rank at most two, contradicting full pinning.

The calculation is literal.  If \(W\) is only a two-dimensional slice of a
larger residual, evaluation at the two silent labels need not be an
isomorphism and the atom equality used before \eqref{eq:app-two-silent} need
not hold.

For completeness, the determinant-definiteness lemma can be expressed
without a normal-form choice.  If \(\mathcal D\subset M_2(\R)\) is a
two-dimensional division plane and \(M_0\in\mathcal D\setminus\{0\}\),
right multiplication by \(M_0^{-1}\) takes it to a plane containing \(I\).
After removing the scalar part of a second generator, its characteristic
polynomial has negative discriminant.  Conjugation and scaling reduce it to
\(\mathcal J^2=-I\).  The trace annihilator then consists of
\(
 \left(\begin{smallmatrix}x&y\\y&-x\end{smallmatrix}\right)
\), on which determinant equals \(-x^2-y^2\).  Undoing the transformations
multiplies determinant by one fixed nonzero scalar, establishing the fixed
sign used in \Cref{thm:division-residual,thm:maximal-silent}.

\section{Exact computation and reproducibility}
\label{app:computation}

The finite results in \Cref{sec:c5,sec:limits} are supplied as a separate
replay package.  Its role is limited: the conceptual reduction to finitely
many families is proved in the text, while exact algebra decides the residual
families.  No random sampling or floating-point tolerance determines a
mathematical verdict.

The package contains seven certificate families:
\begin{enumerate}[leftmargin=2em]
\item the two-terminal load identity and the five-vertex Pfaffian carrier;
\item the isolation-qualified separator reaction and block counts;
\item the strict local five-cycle survivor and the same-label endpoint scan;
\item the all-orientation one-external exclusion;
\item the generic selector counterexample;
\item the midpoint partial-flux counterexample;
\item the finite solved-case regression for \(3\le n\le9\).
\end{enumerate}

Polynomial identities are checked by coefficient equality.  Rational
inequalities use exact fractions; algebraic signs in
\(\mathbb Q(\sqrt{1749})\) are certified by rational isolating bounds.
Jacobian ranks are decided by exact minors.  Generic five-cycle charts are
reduced by rational-function elimination, while denominator-zero cases are
reintroduced as separate polynomial systems.  Gr\"obner bases and exact real
root tests decide the remaining endpoints.  Inactive triangle inequalities
are tested after every algebraic candidate is recovered.

The all-orientation calculation first quotients by the dihedral action and
global sign reversal.  Four representatives, with orbit sizes
\(2,10,10,10\), cover all \(32\) words.  The primary enumeration checks
\(480\) generic patterns and \(1360\) singular charts.  A separate ungauged
exceptional-rank implementation checks \(1440\) cases.  Agreement of the two
routes is part of the certificate rather than an informal spot check.

The verification program stops on a missing dependency, malformed
certificate, failed assertion, or violated declared scope condition.  It
also checks a manifest of cryptographic hashes.  Python 3.10 or later is
required; only the local five-cycle calculation additionally requires SymPy.
After dependencies are installed, replay is self-contained and requires no
network access.  The source archive includes a claim-to-certificate map,
machine-readable receipts, and exact input data.

Computational conclusions are stated at their verified realizability level.
In particular, the strict five-cycle survivor is a local determinant-and-trace
configuration rather than a global KKT optimizer; the midpoint example is a
four-label partial-flux counterexample; and the solved-case table is a finite
regression test rather than evidence for an induction.

\section*{Acknowledgements}
OpenAI ChatGPT was used as an assistive tool during the development and
preparation of this manuscript, including literature-search assistance,
organization of research material, drafting and language editing, and
assistance with symbolic-computation code and proof checking. The author
reviewed and verified all mathematical statements, proofs, citations,
computations, code outputs, and final wording, and takes full responsibility
for the content of the work.

\section*{Statements and Declarations}
\textbf{Funding.} No funding was received for conducting this study or
preparing this manuscript.

\textbf{Competing interests.} The author declares no competing financial or
non-financial interests.

\textbf{Author contributions.} Dawid Trela was responsible for
conceptualization, methodology, formal analysis, investigation, software,
validation, visualization, and writing (original draft, review, and editing).

\textbf{Data and code availability.} The exact symbolic inputs, verification
programs, expected outputs, declared scope conditions, and SHA-256 manifest
supporting the finite computational claims are included in Online Resource~1
supplied with the submission. Verification uses exact arithmetic, requires
Python~3.10 or later and SymPy~1.14.0, and runs without network access after
dependency installation.

\textbf{Ethics approval and consent to participate.} Not applicable.

\textbf{Consent for publication.} Not applicable.

\bibliographystyle{plain}
\bibliography{references}

\end{document}